\NewCommandCopy{\origboxtimes}{\boxtimes}
\def\V{\mathrm{Vertex}}
\def\Edge{\mathrm{Edge}}
\def\R{\mathcal{R}}
\def\to{\rightarrow}
\def\pcf{post-critically~finite~}
\def\TmapA{f\colon (S^2,A)\righttoleftarrow}
\def\hCbb{\hat{\Cbb}}
\def\Sier{Sierpi\'{n}ski~carpet~}
\def\from{\colon}
\def\bfe{\mathbf{e}}
\def\bft{\mathbf{t}}
\def\band{(t;(e_1,s_1),(e_2,s_2))}
\def\bandn{(t^n;(e_1^n,s^n_1),(e_2^n,s^n_2))}
\def\bandm{(t^m;(e_1^m, s^m_1),(e_2^m,s^m_2))}
\DeclareMathOperator{\Crit}{Crit}
\DeclareMathOperator*{\esssup}{ess.sup}
\DeclareMathOperator{\ARC}{\mathrm{ARC.dim}}
\DeclareMathOperator{\Fill}{\mathrm{Fill}}
\DeclareMathSymbol{\mhyphen}{\mathord}{AMSa}{"39}
\newtheorem{thm}{Theorem}[section]
\newtheorem{coro}[thm]{Corollary}
\newtheorem{lem}[thm]{Lemma}
\newtheorem{prop}[thm]{Proposition}
\newtheorem{conj}[thm]{Conjecture}
\newtheorem{theorem}{Theorem}
\newtheorem{conjecture}[theorem]{Conjecture}
\theoremstyle{definition}
\newtheorem{defn}[thm]{Definition}
\newtheorem{eg}[thm]{Example}
\newtheorem{rem}[thm]{Remark}
\title{Julia sets with Ahlfors-regular conformal dimension one}
\author{Insung Park}
\address{Institute for Mathematical Sciences, Stony Brook University, Stony Brook NY 11794-3660, USA}
\email{insung.park@stonybrook.edu}
\date{}
\subjclass[2020]{37F10 (Primary) 37E25 (Secondary)}
\begin{document}

\begin{abstract}
	For a \pcf hyperbolic rational map $f$, we show that its Julia set $\Jcal_f$ has Ahlfors-regular conformal dimension one if and only if $f$ is a crochet map, i.e., there is an $f$-invariant connected graph $G$ containing the post-critical set such that $f|_G$ has topological entropy zero.
	
	We use finite subdivision rules to obtain graph virtual endomorphisms, which are 1-dimensional models of \pcf rational maps, and we approximate the asymptotic conformal energies of graph virtual endomorphisms to estimate the Ahlfors-regular conformal dimensions of Julia sets. To prove the main theorem, we also establish the monotonicity of asymptotic conformal energies under the decomposition of rational maps by invariant multicurves.
\end{abstract}

\maketitle

\tableofcontents

\section{Introduction}

Julia sets are fractals defined by the dynamical properties of iterations of rational maps. The dynamics of rational maps tends to be more complicated as their Julia sets are more intricate. Hausdorff dimension is a classical invariant of fractals and has been widely used as a measurement of the complexity of Julia sets. However, Hausdorff dimension is sometimes too sensitive to understand topological properties of the dynamics of rational maps. Ahlfors-regular conformal dimension is a variant of Hausdorff dimension that is less sensitive to geometric deformations. For example, rational maps in the same hyperbolic component have the same Ahlfors-regular conformal dimension, while their Hausdorff dimensions vary. Julia sets of post-critically finite rational maps have Ahlfors-regular conformal dimension between $1$ and $2$. A Julia set has the maximal Ahlfors-regular conformal dimension, $\ARC(\Jcal_f)=2$, if and only if $\Jcal_f$ is the whole Riemann sphere. However, the other extreme case, $\ARC(\Jcal_f)=1$, includes various Julia sets, such as the Julia sets of \pcf polynomials or Newton maps. In this paper, we characterize \pcf hyperbolic rational maps whose Julia sets have Ahlfors-regular conformal dimension one.

\begin{theorem}\label{theorem:main}
For a hyperbolic \pcf rational map $f$, the Ahlfors-regular conformal dimension of the Julia set $\Jcal_f$ is one if and only if $f$ is a crochet map.
\end{theorem}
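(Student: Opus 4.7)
The plan is to translate the statement about the Ahlfors-regular conformal dimension $\ARC(\Jcal_f)$ into one about asymptotic conformal energies of graph virtual endomorphisms derived from finite subdivision rules of $f$, and then identify the crochet condition as exactly the threshold where these energies become trivial. Any $f$-invariant graph $G \supset P_f$ yields a finite subdivision rule on $(S^2,G)$ and hence a graph virtual endomorphism $\phi_G \colon G \dashrightarrow G$; the key input, developed earlier in the paper in the spirit of Ha\"{\i}ssinsky--Pilgrim and Carrasco Piaggio, is the identification of $\ARC(\Jcal_f)$ with the infimum of asymptotic conformal energies $\mathcal{E}(\phi_G)$ over such $G$.

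For the forward ($\Leftarrow$) direction, I would assume $f$ is crochet with witness graph $G \supset P_f$ on which $f|_G$ has zero topological entropy, then show $\mathcal{E}(\phi_G) \leq 1$. Zero topological entropy translates into sub-exponential growth of the edge-transition data of the associated subdivision rule, and a direct estimate of the $p$-conformal energies should force the critical exponent down to $1$. Combined with the general lower bound $\ARC(\Jcal_f) \geq 1$, valid for any connected Julia set, this gives equality.

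For the converse ($\Rightarrow$), I would argue the contrapositive and produce a strict lower bound $\ARC(\Jcal_f) > 1$ under the assumption that $f$ is not crochet. The main ingredient is the monotonicity of asymptotic conformal energy under the canonical decomposition of a rational map along a fully invariant multicurve, which is the second main tool announced in the abstract: if $f$ decomposes into pieces $f_1, \dots, f_k$, then $\mathcal{E}(\phi_G) \geq \max_i \mathcal{E}(\phi_{G,i})$ for the induced endomorphisms on each piece. Since $f$ is non-crochet, after running the decomposition to its maximal stage some indecomposable piece $f_i$ must still be non-crochet, so every $f_i$-invariant graph carries a positive-entropy sub-dynamics. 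Using the reduction procedure for finite subdivision rules (the other novel tool of the paper), I would reduce to the case where $\phi_{G,i}$ is irreducible and show that positive topological entropy forces $\mathcal{E}(\phi_{G,i}) > 1$ strictly. Taking infimum over $G$ and invoking monotonicity then yields $\ARC(\Jcal_f) > 1$.

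The main obstacle is the very last step: upgrading $\mathcal{E} \geq 1$ to the strict $\mathcal{E} > 1$ for an irreducible non-crochet reduced subdivision rule. The non-strict inequality is essentially formal, but strictness reflects the fact that positive graph entropy cannot be masked by a clever conformal reweighting of edges. I expect the argument to rest on a careful analysis of the Perron--Frobenius eigenvalue of the transition matrix of the reduced subdivision rule, linking the crochet/non-crochet dichotomy on invariant graphs to the threshold value $p=1$ in the definition of the $p$-conformal energy; the reduction step is what makes this spectral analysis tractable by forcing the underlying combinatorial data to be minimal.
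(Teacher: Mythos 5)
Your outline identifies the right toolkit — finite subdivision rules, asymptotic conformal energies, monotonicity under decomposition — but there are two genuine gaps, one in each direction.

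\textbf{Gap in ``crochet $\Rightarrow$ $\ARC=1$''.} You propose to show $\mathcal{E}(\phi_G)\leq 1$ (i.e.\ $\overline{E}^1(f)\leq 1$) from the zero-entropy condition and conclude $\ARC(\Jcal_f)=1$ by combining with the trivial lower bound. This does not follow. The identification $\overline{E}^{p_*}(f)=1$ at $p_*=\ARC(\Jcal_f)$ together with the known monotonicity of $p\mapsto\overline{E}^p(f)$ does \emph{not} let one deduce $p_*=1$ from $\overline{E}^1(f)=1$, because strict monotonicity of $\overline{E}^p$ in $p$ is precisely Conjecture~\ref{conj:StrictDecr} and is open. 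One could a priori have $\overline{E}^p(f)=1$ on an interval $[1,q]$ with $q>1$. What the paper actually proves, and what is the technical heart of that direction, is the strict inequality $\overline{E}^p(f)<1$ for all $p>1$. Achieving this requires first reducing the finite subdivision rule (via the Rees--Shishikura/Cui--Peng--Tan semi-conjugacy, Corollary~\ref{cor:RemoveJuliaEdgewithPoly}) so that Julia vertices are isolated, then exploiting the fact that the non-expanding spine decomposes into star-like trees (Proposition~\ref{prop:NESpinIsoJulia}, Proposition~\ref{prop:DiscreteTrainTrack}), and finally constructing $K$-expanding $p$-lengths together with local $(\epsilon,\epsilon')$-deformations to push the energy strictly below $1$. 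Your sketch gives no mechanism for obtaining the strict inequality, which is where essentially all the work lies.

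\textbf{Gap in ``non-crochet $\Rightarrow$ $\ARC>1$''.} You argue that after running the decomposition ``some indecomposable piece $f_i$ must still be non-crochet'', then try to extract $\mathcal{E}(\phi_{G,i})>1$ from positive entropy. This is not how the canonical decomposition works: by Dudko--Hlushchanka--Schleicher (Theorem~\ref{thm:DHScrochet}), every small rational map of the $\Gamma$-decomposition is either a crochet map or a Sierpi\'{n}ski carpet map — none of the pieces is ``still non-crochet in an unresolved way''. The non-crochet hypothesis on $f$ instead guarantees that either (i) the decomposing multicurve $\Gamma$ is a Cantor multicurve, in which case $Q(\Gamma)>1$ and the multicurve \emph{itself} furnishes the lower bound $\ARC(\Jcal_f)\geq Q(\Gamma)>1$ via the monotonicity theorem, even when all the pieces $f_i$ happen to be crochet maps; or (ii) some piece is a Sierpi\'{n}ski carpet map, for which $\overline{E}^1(f_i)>1$ is imported from Pilgrim--Thurston (Proposition~\ref{prop:CarpetE1}). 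Your outline misses case (i) entirely and replaces case (ii) by an unsubstantiated claim that one can always force a positive-entropy sub-dynamics to yield $\overline{E}^1>1$. The ``main obstacle'' you flag (upgrading $\mathcal{E}\geq 1$ to $\mathcal{E}>1$) is indeed where your plan would get stuck, but the paper resolves it by a dichotomy (Cantor multicurve versus Sierpi\'{n}ski piece) rather than by the spectral/Perron--Frobenius analysis you anticipate.

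A smaller point: $\ARC(\Jcal_f)$ is not ``the infimum of $\mathcal{E}(\phi_G)$ over invariant graphs $G$.'' The asymptotic energy $\overline{E}^p(f)$ is an invariant independent of the choice of spine or set of Fatou marked points, and $\ARC(\Jcal_f)$ is the unique $p_*$ with $\overline{E}^{p_*}(f)=1$; no infimum over $G$ appears.
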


Let us see more details and backgrounds on the terminologies written in Theorem \ref{theorem:main}.

\subsection*{Hyperbolic \pcf rational maps.}
For a rational map $f\from \hCbb\righttoleftarrow$, a point $z\in \hCbb$ is {\it critical} if $f'(z)=0$, or, equivalently, $f$ is not a local homeomorphism near $z$. Denote by $\mathrm{Crit}(f)$ the set of all critical points of $f$. We define the {\it post-critical set} $P_f$ by
\[
    P_f=\{f^n(c): c \in \mathrm{Crit}(f)~\mathrm{and}~n>0\}.
\]
We say that $f$ is {\it \pcf} if $P_f$ is finite. One reason for the popularity of post-critically finite rational maps is Thurston's characterization, which gives rise to a one-to-one correspondence between \pcf rational maps and certain homotopy classes of \pcf topological branched coverings \cite{DH_ThuChac}. Thus topological ideas can be well applied to investigate the dynamical properties of \pcf rational maps.

A rational map $f\from \hCbb\righttoleftarrow$ is {\it hyperbolic} if every critical point is attracted to an attracting periodic cycle. The set of hyperbolic rational maps of degree $d$, denoted by $\Hcal_d$, is open in the set of all rational maps of degree $d$, denoted by $\mathrm{Rat}_d$. It is a famous conjecture in complex dynamics that $\Hcal_d$ is dense in $\mathrm{Rat}_d$. Each connected component of $\Hcal_d$ is called a {\it hyperbolic component}. By \cite{MMS_JStable}, if two rational maps $f$ and $g$ are in the same hyperbolic component, then their Julia sets $\Jcal_f$ and $\Jcal_g$ are quasi-symmetrically equivalent with respect to the spherical metric. Moreover, if the Julia sets of a hyperbolic component are connected, then the hyperbolic component contains a unique rational map that is post-critically finite, see \cite[Corollary 3.6]{McM_AutoRatlMap} and \cite[Corollary 5.2]{Mil_HypComp}. To sum up, a conjecturally generic (an element in an open and dense subset in this context) connected Julia set is quasi-symmetrically equivalent to the Julia set of a hyperbolic \pcf rational map.

\subsection*{Ahlfors-regular conformal dimension} Two metric spaces $(X,d_X)$ and $(Y,d_Y)$ are {\it quasi-symmetric}, or {\it quasi-symmetrically equivalent}, if there is a distortion function $\eta\from[0,\infty)\righttoleftarrow$ that is a homeomorphism such that, for every distinct $x,y,z\in X$, we have
\[
\frac{d_Y(f(x),f(y))}{d_Y(f(x),f(z))} \le \eta\left( \frac{d_X(x,y)}{d_X(x,z)} \right).
\]
We write $X \sim_{q.s.} Y$ if $X$ and $Y$ are quasi-symmetric. Quasi-symmetric classes of Julia sets have been studied in diverse perspectives, e.g., quasi-symmetries of Julia sets \cite{BonkLyuMeren_QSofCarpets, LyuMeren_QSBasilica, BelkForrest_ThompBasilica, QYZ_QSGeomCarpet} and quasi-symmetric uniformizations of Julia sets \cite{QY_QSUnifCantorCircles, Bonk_QSUnifCarpet, QYZ_QSGeomCarpet}.

For a compact metric space $X$, we denote its Hausdorff dimension by $\mathrm{H.dim}(X)$. A compact metric space $X$ is {\it Ahlfors-regular} if there is a uniform constant $C>1$ such that, for every $x\in X$ and $0<r<\mathrm{diam}(X)$, we have
\[
    \frac{1}{C} r^d < \Hcal^d(B(x,r)) < C r^d,
\]
where $B(x,r)$ is the ball of radius $r$ with center $x$, $d=\mathrm{H.dim}(X)$, and $\Hcal^d$ is the $d$-dimensional Hausdorff measure.

The (resp.\@ {\it Ahlfors-regular}) {\it conformal dimension} of a compact metric space $X$, denoted by $\mathrm{C.dim}(X)$ (resp.\@ $\ARC(X)$), is the infimum of the Hausdorff dimension of $Y$ satisfying $X \sim_{q.s.} Y$ (resp.\@ with $Y$ Ahlfors-regular). Thus, $\mathrm{C.dim}(X)$ and $\ARC(X)$ are invariants of the quasi-symmetric class of the metric space $X$. We say that the (Ahlfors-regular) conformal dimension is {\em attained} if it is equal to the minimum of Hausdorff dimension in the quasi-symmetric class. See \cite{HP_CXC, HP_QIneqCarpets, HP_MinConfDimCXC} for some previous works on the Ahlfors-regular conformal dimension of Julia set. 

The notion of conformal dimension was introduced by Pansu \cite{Pansu_Confdim} in the study of negatively curved rank one symmetric spaces of non-compact type and their boundaries.
Then the idea of conformal dimension was applied in the study of Gromov hyperbolic spaces. For a Gromov hyperbolic space $X$, its boundary at infinity $\partial_\infty X$ has a metric, called a {\em visual metric}, that is determined up to quasi-symmetry. In this respect, taking the infimum of Hausdorff dimension over a quasi-symmetric class is a natural way to obtain an invariant of $X$ or $\partial_\infty X$. See \cite{MackayTyson_Confdim} for a comprehensive account of the conformal dimension.

More recently, Eriksson-Bique published a theorem showing that the conformal dimension and the Ahlfors-regular conformal dimension are the same for compact, connected, locally connected, quasiself-similar metric spaces \cite{EB_EqualityConfDim}. The quasiself-similarity in \cite{EB_EqualityConfDim} is a typical feature of semi-hyperbolic rational maps, where any small part is conformally isomorphic to a region of a fixed size, as formalized, for example, by the [Roundness distortion] axiom in \cite{HP_CXC}. The author has chosen to retain the term ``Ahlfors-regular'' in this paper, but the reader may pay less attention the Ahlfors-regularity condition.

\subsection*{Applications of conformal dimensions in realization problems} For a motivation of studying conformal dimensions, let us review some applications of conformal dimensions to realization problems in geometric group theory and complex dynamics.

Cannon conjectured that for a Gromov hyperbolic group $G$, if $\partial_\infty G$ is homeomorphic to the 2-sphere, then $G$ is virtually isomorphic to the fundamental group of a compact hyperbolic 3-manifold \cite[Conjecture 11.34]{Cannon_Conjecture_11_34}. Bonk and Kleiner proved a partial result that if $\ARC(\partial_\infty G)$ is equal to 2 and attained, then $G$ is virtually isomorphic to the fundamental group of a compact hyperbolic 3-manifold \cite{BonkKleiner_ConfdimHypGp}.

Realization problems in complex dynamics ask what topological branched self-coverings of the 2-sphere are topologically conjugate (sometimes up to homotopy) to the dynamics of rational maps. Thurston's characterization is one of the most famous realization theorems in complex dynamics \cite{DH_ThuChac}. In \cite{HP_MinConfDimCXC}, Ha\"{i}ssinsky and Pilgrim showed that for a coarse expanding conformal map $f$ with the repellor $X$ equal to $S^2$, $\ARC(X)=2$ and attained if and only if $f$ is topologically conjugate to a semi-hyperbolic rational map, which is an analogue of the Bonk--Kleiner's theorem. However, there are many coarse expanding conformal maps such that their repellors are homeomorphic to $S^2$ but they are not topologically conjugate to rational maps, which give counter examples to an analogue of the Cannon's conjecture for coarse expanding conformal maps.

More recently, Ha\"{i}ssinsky showed that for a Gromov hyperbolic group $G$, if $\ARC(\partial_\infty G)<2$ then $G$ is virtually isomorphic to a convex cocompact Kleinian group \cite{Hais_PlanarBdry}. Using a similar technique, Ha\"{i}ssinsky announced a result that a coarse expanding conformal map is topologically conjugate to a rational map if the Ahlfors-regular conformal dimension of its repellor is less than 2. A summary of its proof is in \cite{Haiss_TopChacOfRatlMaps}.

By a work of Bartholid and Dudko \cite{BartholdiDudko_expanding}, if a \pcf topological branched covering $f$ of the sphere does not have a Levy cycle, then the Julia set $\Jcal_f$ is still well-defined. Then, Conjecture \ref{conj:StrictDecr} together with Theorem \ref{thm:PropertyOfComfEnergy} implies that if $f$ is of hyperbolic-type and $\ARC(\Jcal_f)<2$, then $f$ is realized as a rational map. We say that a \pcf topological branched covering is of {\it hyperbolic-type} if every critical point (marked point in general) has a critical periodic point in its forward orbit, see Section \ref{sec:Obstructions} for more details.

\subsection*{Attainment of $\ARC(\Jcal_f)=1$}
As stated in the previous paragraph, rigidity theorems follow from the assumptions that Ahlfors-regular conformal dimensions are attained. The rigidity for the attainment of ${\rm C.dim}(\Jcal_f)=1$ (hence, of $\ARC(\Jcal_f)=1$ as well) was obtained by Wu and the author \cite{ParkWu_Yshape}.

\begin{thm}[{\cite[Theorem 1.6]{ParkWu_Yshape}}]
    Let $f$ be a semi-hyperbolic rational map of degree $d$ with a connected Julia set $\Jcal_f$. If ${\rm C.dim}(\Jcal_f)=1$ and attained, then $\Jcal_f$ is quasi-symmetric to $\Sbb^1$ or $[0,1]$. Moreover, $f$ is a sub-hyperbolic rational map that is quasi-conformally conjugate near Julia sets to $z^d$, $1/z^d$, the degree-$d$ Chebyshev polynomial, or the degree-$d$ negated Chebyshev polynomial.
\end{thm}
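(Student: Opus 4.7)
The plan is to first establish that $\Jcal_f$ is quasi-symmetric to either $\Sbb^1$ or $[0,1]$, and then enumerate the PCF rational maps that realize these two possibilities. Under the hypothesis that $\ARC(\Jcal_f)=1$ is attained, choose a metric $d'$ on $\Jcal_f$ in its quasi-symmetric class that is Ahlfors $1$-regular. Semi-hyperbolicity of $f$ together with connectedness of $\Jcal_f$ implies that the Julia set is a linearly locally connected Peano continuum, and these properties are preserved by quasi-symmetries, so $(\Jcal_f,d')$ is a linearly locally connected Peano continuum of Hausdorff dimension exactly one.

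The heart of the argument is to rule out a ``Y-shape'' in $(\Jcal_f,d')$: no subcontinuum should contain a branch point of local order at least three. I would approach this by dynamical self-similarity. If such a Y-shape were present at some $x\in\Jcal_f$, pulling it back along iterates of $f$ and invoking the bounded-geometry features of semi-hyperbolic dynamics produces uniformly quasi-symmetric rescaled copies of it accumulating densely near any repelling periodic orbit. Such accumulation forces the quotient $\Hcal^1(B(y,r))/r$ to blow up along a sequence $r\to 0$, contradicting the Ahlfors $1$-regularity of $d'$. This Y-exclusion step is the main obstacle I anticipate: one must simultaneously quantify the distortion of the shape under deep inverse iterations on a semi-hyperbolic Julia set (so that the pulled-back copies remain genuine metric Y-shapes up to a uniform quasi-symmetry), and prove a clean quantitative lower bound on the $\Hcal^1$-mass contribution of an order $\ge 3$ branch point in an Ahlfors $1$-regular metric space.

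Once branch points are excluded, the classical topological characterizations of the arc and of the simple closed curve among Peano continua identify $\Jcal_f$ topologically as $[0,1]$ or $\Sbb^1$. A Tukia--V\"{a}is\"{a}l\"{a}-type quasi-symmetric uniformization for Ahlfors $1$-regular arcs and circles then upgrades this identification to a genuine quasi-symmetry with the standard metric model, completing the first claim. For the PCF enumeration, if $\Jcal_f\sim_{q.s.}\Sbb^1$ then $f$ preserves its Julia circle, and the combination of the PCF hypothesis with $f$ being a degree-$d$ branched cover of $\hCbb$ forces $f$ to be M\"{o}bius-conjugate to $z\mapsto z^d$ or $z\mapsto 1/z^d$, since these are the only PCF degree-$d$ rational maps admitting an invariant round circle. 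If instead $\Jcal_f\sim_{q.s.}[0,1]$, then $f$ is a polynomial whose filled Julia set is an arc, and the PCF constraint on the critical orbit (critical values must be endpoints, orbits must be finite) singles out the degree-$d$ Chebyshev polynomial or its negation up to M\"{o}bius conjugacy.
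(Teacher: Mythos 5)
The theorem is cited from \cite{ParkWu_Yshape} and the present paper contains no proof of it, so there is no internal argument to compare against; I can only assess your sketch on its own terms, noting that the bibkey itself signals that the cited work proceeds via a Y-shape exclusion, consistent with your plan. Your broad strategy (pass to an Ahlfors $1$-regular representative, exclude branch points dynamically, invoke topological characterizations of the arc and circle, then enumerate the PCF cases) appears to be the right outline.

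Two concrete concerns. First, the mass-blowup step is not sound as stated. Having uniformly quasi-symmetric copies of a Y-shape ``accumulating densely'' near a repelling periodic point does not by itself force $\Hcal^1(B(y,r))/r\to\infty$: a single Y-shape of scale $r$ near $y$ contributes only $O(r)$ to $\Hcal^1(B(y,Cr))$, which is perfectly compatible with Ahlfors $1$-regularity (a bare tripod is Ahlfors $1$-regular). What is actually needed is \emph{nested} branching across scales: each branch of a Y-shape must itself contain Y-shapes of comparable relative size, recursively, so that $B(y,r)$ accumulates something like $3^k\cdot 2^{-k}r$ of $\Hcal^1$-mass over the first $k$ generations. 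Dynamical self-similarity in the semi-hyperbolic setting does provide this recursive proliferation (iterated preimages spread branch points densely at all scales with bounded distortion), but the conclusion hinges on the recursion, not on one-scale accumulation; as written your argument would ``prove'' that a tripod violates $1$-regularity. Second, in the PCF enumeration you argue from ``$\Jcal_f$ quasi-symmetric to $\Sbb^1$'' to ``$f$ admits an invariant round circle,'' but quasi-symmetric equivalence to the circle does not make the Julia set round. The standard route is: a PCF rational map with Jordan-curve Julia set has exactly two simply connected Fatou components, each containing a superattracting periodic point since $f$ is PCF; Böttcher coordinates in each component then force M\"{o}bius conjugacy to $z^d$ or $1/z^d$. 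The arc case similarly goes through the semiconjugacy $z^d$ down to the Chebyshev family rather than any roundness claim.
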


\subsection*{Computation of conformal dimensions}
Unlike the case of negatively curved symmetric spaces \cite{Pansu_Confdim}, explicit values of conformal dimensions of Julia sets or boundaries of Gromov hyperbolic groups are mostly unknown.
For a \pcf rational map $f$, the Julia set $\Jcal_f$ has Ahlfors-regular conformal dimension between $1$ and $2$. $\ARC(\Jcal_f)$ is equal to two if and only if $\Jcal_f=\hCbb$. The other extreme case, when $\ARC(\Jcal_f)=1$, is the subject of this article. It was previously shown that $\ARC(\Jcal_f)=1$ if $f$ is a semi-hyperbolic polynomial \cite{Carrasco_ConfDimCombiMod, Kinneberg_ConfDimPlanarDomain} or the mating of the Rabbit and the Basilica polynomials \cite{PilThu_ARConfDim}, see Figure \ref{fig:RabbitandBasilica}.

\subsection*{Decomposition of branched coverings}
To obtain lower bounds of Ahlfors-regular conformal dimensions of Julia sets, we use the decompositions of branched coverings, which are rigorously formulated in \cite{Pilgrim_Combination}. Consider a \pcf branched covering $\TmapA$. Suppose that $\Gamma$ is a {\it completely $f$-invariant} multicurve, i.e., every essential connected component of $f^{-1}(\Gamma)$ is contained in $\Gamma$ up to homotopy ({\it backward invariant}) and every component of $\Gamma$ is homotopic to a component of $f^{-1}(\Gamma)$ ({\it forward invariant}), see Section \ref{sec:Obstructions}. By pinching the sphere along $\Gamma$, we decompose the sphere into several small spheres. There is an induced dynamics among the small spheres. The first return maps of periodic small spheres define \pcf branched coverings $f_i\from (S^2(i),A(i))\righttoleftarrow$, which are called the {\it small branched coverings} of $(f,\Gamma)$. See Section \ref{sec:MonoBrCov} for more details.

\subsection*{Crochet maps}
A \pcf rational map $f$ is a {\it crochet map} if there exists an $f$-invariant connected graph $G$ so that $P_f\subset G$ and the topological entropy of $f|_G\from G\to G$ is zero. Crochet maps contain many interesting families of \pcf rational maps. For example, spiders of \pcf polynomials \cite{Poi_CritPort, BFH_ClassCritPreper, spider} can be used to construct $f$-invariant graphs with zero topological entropy. Also, critically fixed rational maps \cite{pilgrim_Classification_criticallyfixed,Hlushchanka_criticallyfixed}, (second iterates of) critically fixed anti-rational maps \cite{Geyer_CritFixAnti}, \pcf Newton maps \cite{DMRS_ClassifiPCFixedNewton, LMS_ClassfiPCFiniteNewton}, and matings of two \pcf polynomials one of which has core entropy zero, Proposition \ref{prop:MatingCrochet}. See Section \ref{sec:crochet} for more details.

Crochet maps were recently introduced by Dudko--Hlushchanka--Schleicher as elementary building blocks of rational maps, together with \Sier maps, in their decomposition theory \cite{DHS_Decomp}. We call a rational map a {\it \Sier map}, if its Julia set is homeomorphic to the Sierpi\'{n}ski carpet. They proved that if $f$ is a \pcf rational map with a non-empty Fatou set, then there is a canonical completely $f$-invariant multicurve $\Gamma$ such that each small rational map of the $\Gamma$-decomposition is either a \Sier map or a crochet map. They also proved that if $f$ is not a crochet map, then either (1) $\Gamma$ is a Cantor multicurve or (2) the $\Gamma$-decomposition has at least one small \Sier map. We say that $\Gamma$ is a {\it Cantor multicurve} if the number of essential components of $f^{-n}(\Gamma)$ isotopic to elements of $\Gamma$ grows exponentially fast with $n$. In the appendix, we sketch the idea of Dudko--Hlushchanka--Schleicher's decomposition theory.

There is intuitive evidence that every non-crochet map has conformal dimension strictly greater than one. Suppose that $f$ is not a crochet map, so that we are in case (1) or (2) of the previous paragraph. There is a natural map into $\Jcal_f$ either from a Cantor set times the circle, $\Ccal \times \Sbb^1$, in the case (1), or from a \Sier Julia set in the case (2). The space $\Ccal \times \Sbb^1$ is a well-known example having conformal dimension strictly greater than one \cite[Proposition 4.1.11]{MackayTyson_Confdim}. A self-similar \Sier also has conformal dimension strictly greater than one by \cite{Mackay_ConfdimCarpet}. We use the theory of asymptotic conformal energies to prove this rigorously.

\begin{figure}
	\centering
	\def\svgwidth{0.5\textwidth}
	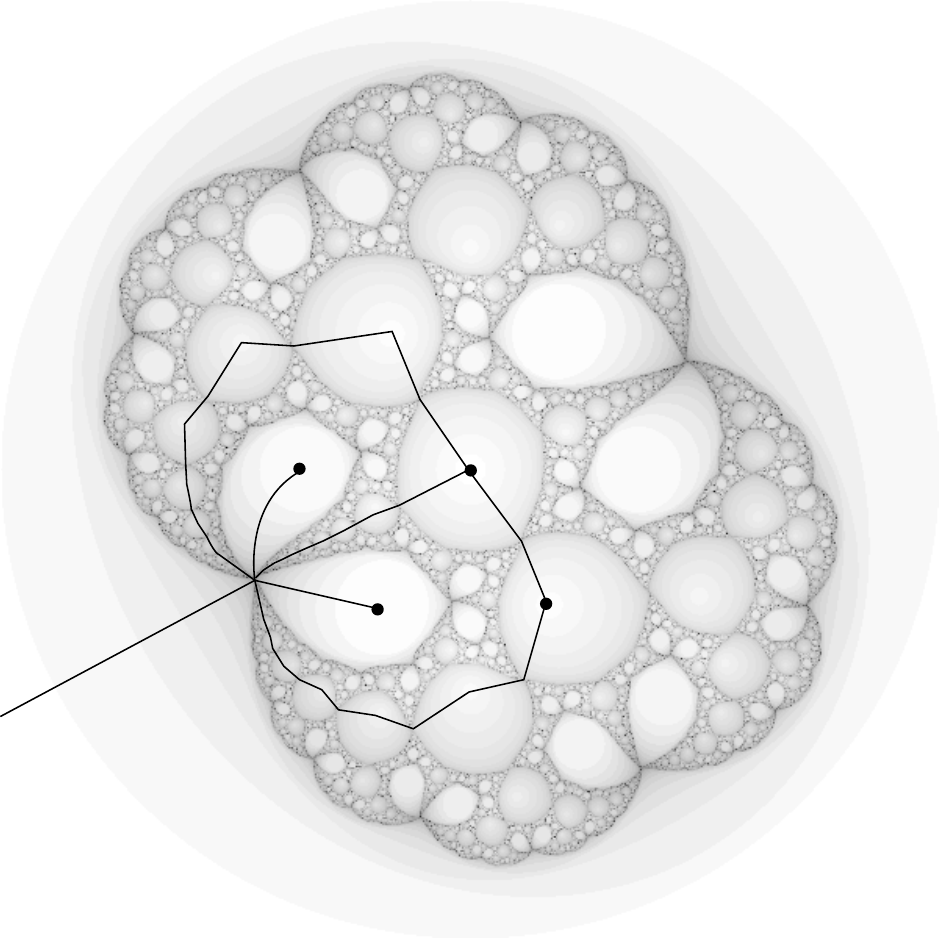
	\caption{The mating of the Rabbit and the Basilica polynomials, $f(z)\approx \frac{z^2+(0.5-0.866 i)}{z^2+(0.5+0.866 i)}$, with an $f$-invariant graph with topological entropy zero drawn. The graph is obtained from the Hubbard tree of the Basilica polynomial and the spider of the Rabbit polynomial, see Proposition \ref{prop:MatingCrochet}.}
		\label{fig:RabbitandBasilica}
	\end{figure}

\subsection*{Asymptotic $p$-conformal energies}
One major technique applied in the proof of Theorem \ref{theorem:main} is the theory of $p$-conformal energies introduced by D.\@ Thurston \cite{ThuElaGraph, Thu_RubberBand, ThuPosChac}. For every \pcf hyperbolic-type topological branched covering of the sphere $\TmapA$ and $p\in [1,\infty]$, we define the {\it asymptotic $p$-conformal energy} $\overline{E}^p(f) \in \Rbb_+$, see Section \ref{sec:AsympConfEng}. Below is a summary of known properties of the asymptotic $p$-conformal energies.

\begin{thm}\label{thm:PropertyOfComfEnergy}
	For every \pcf hyperbolic-type topological branched covering of the sphere $\TmapA$, we have the following properties.
	\begin{enumerate}
		\item $\overline{E}^1(f)\ge 1$ \cite[p.\@ 36]{ThuPosChac}.
        \item $\overline{E}^2(f)<1$ if and only if $f$ is combinatorially equivalent to a rational map \cite[Theorem 1$'$]{ThuPosChac}.
		\item $\overline{E}^\infty(f)<1$ if and only if $f$ does not have a Levy cycle \cite{BartholdiDudko_expanding, Nek_combmodel}.
		\item As a function of $p\in[1,\infty]$, the $\overline{E}^p(f)$ is monotonically decreasing, i.e., $\overline{E}^p(f)\le \overline{E}^q(f)$ if $p>q$ \cite[Proposition 6.10]{ThuPosChac}.
		\item For $p_*=\ARC(\Jcal_f)$, we have $\overline{E}^{p_*}(f)=1$ \cite[Theorem A]{PilThu_ARConfDim}.
	\end{enumerate}
\end{thm}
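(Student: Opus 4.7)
The plan is to verify the four items one at a time, as they come from separate sources and involve distinct ideas. The common setup is that $\overline{E}^p(f)$ is defined as a suitably normalized limit of minimal $p$-Dirichlet energies of maps between weighted graph embeddings associated with iterates $f^n$; granting this definition, each item reduces to identifying how a particular combinatorial obstruction (or its absence) controls the asymptotic energy regime.

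For items (1) and (2), I would present them as dynamical reformulations of classical Thurston-type obstructions. In (1), the $2$-energy is essentially the $L^2$ norm of the graph-map differential, which is comparable to the extremal length of the relevant family of essential curves; applied to a hypothetical invariant multicurve $\Gamma$, the Thurston linear map on $\Rbb^\Gamma$ has spectral radius at most $\overline{E}^2(f)$, so the equivalence is a repackaging of Thurston's characterization of realizable branched coverings as proved in \cite{ThuPosChac}. In (2), the $\infty$-energy is the best Lipschitz constant achievable in the relevant homotopy class of graph maps, and a Levy cycle is precisely the forced obstruction to Lipschitz constant strictly below one; this is the non-expanding/expanding dichotomy made precise by \cite{BartholdiDudko_expanding, Nek_combmodel}.

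Item (3), monotonicity of $\overline{E}^p(f)$ in $p$, should be a soft consequence of monotonicity of $L^p$ norms on probability-weighted sums: Jensen's inequality applied to the energy of a single graph map gives the $p \ge q$ comparison at each finite level, and taking $n$-th roots and passing to the limit preserves the inequality. Item (4) is the substantial content: it asserts that the critical exponent at which $\overline{E}^p(f)=1$ coincides with $\ARC(\Jcal_f)$. Following \cite{PilThu_ARConfDim}, one identifies $\overline{E}^p(f)$ with a combinatorial $p$-modulus of the self-similar curve family naturally living on $\Jcal_f$ and then invokes the Keith--Kleiner / Carrasco-Piaggio characterization of Ahlfors-regular conformal dimension as the critical exponent where combinatorial $p$-modulus transitions.

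The main obstacle is item (4), where one must pass from finite-graph combinatorics to a genuine quasi-symmetric invariant of $\Jcal_f$; this step requires the approximately self-similar structure of the Julia set of a \pcf hyperbolic map and a careful comparison between combinatorial and analytic moduli in the limit. Items (1)--(3), while not trivial, are essentially bookkeeping once the variational definition of $\overline{E}^p$ is in place, so my proof proposal would primarily be to collect and cite, while spending most of the exposition on the identification underlying (4).
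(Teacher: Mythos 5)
The paper does not prove Theorem~\ref{thm:PropertyOfComfEnergy}: it is stated as a compendium of results from \cite{ThuPosChac}, \cite{BartholdiDudko_expanding,Nek_combmodel}, and \cite{PilThu_ARConfDim}, with only a short remark elaborating item~(2) (the chain $\overline{E}^\infty(f)<1 \Leftrightarrow$ contracting biset $\Leftrightarrow$ no Levy cycle, via \cite{Nek_combmodel} and \cite{BartholdiDudko_expanding}, with details deferred to \cite[Prop.\ 7.21]{Park_thesis}), and a later restatement as Proposition~\ref{prop:ExpiffNonLevy}. Your plan to ``collect and cite'' therefore matches the paper's treatment, and your descriptions of items (2)--(4) are faithful to the cited sources, modulo the omission of the intermediate contracting-biset step in (2).

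However, your sketch of item~(1) contains a misunderstanding that would become a genuine gap if you tried to flesh it out. You describe the equivalence ``$\overline{E}^2(f)<1$ iff $f$ is rational'' as ``a repackaging of Thurston's characterization,'' suggesting that one direction follows easily from the multicurve comparison. That comparison (which in this paper appears as Proposition~\ref{prop:EngOfMultiCurve} and Theorem~\ref{thm:MonoConfEngDim}, with the correct normalization $(\lambda_2(\Gamma))^{1/2}\le\overline{E}^2(f)$, not $\lambda_2(\Gamma)\le\overline{E}^2(f)$) indeed gives the easy direction: a Thurston obstruction forces $\overline{E}^2(f)\ge 1$. But the converse --- absence of a Thurston obstruction implies $\overline{E}^2(f)<1$, i.e., the strict sub-multiplicativity of the $2$-energy along the tower of pullbacks --- is the main theorem of \cite{ThuPosChac} and requires the entire apparatus of elastic graphs, harmonic/extremal maps, and a loosening argument; it is not a formal consequence of W.\ Thurston's criterion. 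If you intend to present (1) as more than a citation, you would need to reproduce (or at least correctly attribute the weight of) that argument, rather than treat it as bookkeeping.
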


\begin{rem}
	As for (3), the equivalence between $\overline{E}^\infty(f)<1$ and having contracting biset is almost immediate by \cite{Nek_combmodel}, and the equivalence between having contracting biset and the non-existence of Levy cycles follows from \cite{BartholdiDudko_expanding}. See \cite[Proposition 7.21]{Park_thesis} for more details.
\end{rem}

The affirmative answer to the following conjecture, together with (5) in Theorem \ref{thm:PropertyOfComfEnergy}, implies that Ahlfors-regular conformal dimensions of Julia sets can be uniquely determined by asymptotic conformal energies.

\begin{conj}[{\cite[Conjecture 1.1]{PilThu_ARConfDim}}]\label{conj:StrictDecr}
For a \pcf hyperbolic-type branched covering $\TmapA$, the asymptotic $p$-conformal energy $\overline{E}^p(f)$ is either constant or strictly decreasing as a function of $p \in [1,\infty]$.
\end{conj}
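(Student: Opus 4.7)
The plan is to prove Conjecture \ref{conj:StrictDecr} by realizing $\overline{E}^p(f)$ as a spectral quantity for a family of positive operators that depend real-analytically on $p$, and then upgrading local constancy to global constancy via analytic continuation.

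First, I would use the graph virtual endomorphism machinery developed in the paper to obtain, for a suitable $f$-invariant graph $G$ containing $P_f$ or more generally via the finite subdivision rule associated to $f$, a combinatorial model on which $\overline{E}^p(f)$ can be computed. The $n$-th level $p$-conformal energy $E^p(f^n)$ should take the form of an optimization over edge-length vectors involving an $\ell^p$-type functional, and the asymptotic energy $\overline{E}^p(f) = \lim_n E^p(f^n)^{1/n}$ should be expressible via the spectral radius of a non-negative operator $T_p$ whose entries are $p$-dependent combinatorial weights determined by the dynamics of edges.

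Assuming such a representation, the next step is to exploit that $T_p$ depends real-analytically on $p \in (1, \infty)$. When $T_p$ is irreducible, the Perron--Frobenius eigenvalue is a simple root of the characteristic polynomial and hence depends real-analytically on $p$. Thus, if $p \mapsto \overline{E}^p(f)$ is constant on a non-degenerate subinterval of $(1, \infty)$, real-analyticity propagates constancy to the entire interval $(1, \infty)$, and continuity at the endpoints extends this to $[1, \infty]$.

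The main obstacle is the reducible case. In general, $\overline{E}^p(f)$ is the maximum of spectral radii over irreducible components of the edge dynamics, and the maximum of finitely many real-analytic functions is only piecewise analytic, in principle allowing a flat stretch followed by strict decrease. To rule this out, one would need a strict H\"{o}lder-type inequality showing that each irreducible component's contribution is itself strictly monotone in $p$ unless the edge dynamics on that component is essentially a single cycle with equal weights. Combined with the Dudko--Hlushchanka--Schleicher crochet/Sierpi\'{n}ski carpet dichotomy and the rigidity provided by Theorem \ref{thm:PropertyOfComfEnergy}(4), one should identify the constant-energy cases as exactly those in which every dominant component of the edge dynamics is degenerate in this sense, which can then be shown to force global constancy of the function $p \mapsto \overline{E}^p(f)$.
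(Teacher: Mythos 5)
The statement you have attempted to prove is labelled in the paper as \emph{Conjecture}~\ref{conj:StrictDecr}; the paper does not prove it, and it remains an open problem (it is attributed to \cite{PilThu_ARConfDim}). So there is no proof in the paper to compare against; I can only assess the proposal on its own merits.

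There are two genuine gaps. The first, and most serious, is that your proposed spectral representation does not exist in general. The asymptotic energy is defined as $\overline{E}^p(f)=\lim_n \sqrt[n]{\,E^p_p[\phi^n_0]\,}$, where $E^p_p[\phi^n_0]$ is an infimum over the homotopy class of the level-$n$ pullback map, and that infimum is a genuinely infinite-dimensional variational problem: one optimizes over all PL representatives, not over a finite set of combinatorial weights. The natural representative coming from a finite subdivision rule only yields the upper bound $E^p_p(\phi^n_0)$, not the infimum, and those two quantities are generically different (this is exactly what the ``crochet $\Rightarrow$ ARC.dim$=1$'' argument exploits by deforming the natural representative to lower the energy below $1$). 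The only situation in the paper where $\overline{E}^p$ collapses to a Perron--Frobenius eigenvalue of a $p$-dependent matrix is the restricted dynamics on a stable multicurve, Proposition~\ref{prop:EngOfMultiCurve}; and for such multicurves the conjecture is essentially the known statement (2-i) in Section~\ref{sec:Obstructions} that $\lambda_p(\Gamma)$ is strictly decreasing when no Levy cycle is present. For the full graph virtual endomorphism of $f$ no analogous matrix model is available, so the analytic-continuation step has nothing to act on. Second, you correctly identify the reducibility of the putative $T_p$ as ``the main obstacle'' but then only gesture at a strict H\"older-type inequality plus the Dudko--Hlushchanka--Schleicher dichotomy. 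That dichotomy (Theorem~\ref{thm:DHScrochet}) is proved for \pcf \emph{rational} maps with non-empty Fatou set, whereas the conjecture concerns arbitrary \pcf hyperbolic-type \emph{branched coverings}, which need not be Thurston-realizable and may even carry Levy cycles; similarly, Theorem~\ref{thm:PropertyOfComfEnergy}(4) presupposes a Julia set and hence at minimum the absence of Levy cycles. As written, the plan is circular at the point where the conjecture would actually be decided, and it imports rational-map-only tools into a more general setting.
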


Let $\Gamma$ be a completely $f$-invariant multicurve of $\TmapA$. Following \cite{HP_ThuObsConfDim}, we define in Section \ref{sec:Obstructions} a linear transformation $f_{p,\Gamma}\from  \Rbb^\Gamma \to \Rbb^\Gamma$ for $p\in[1,\infty]$. When $p=2$, the transformation $f_{2,\Gamma}$ is equivalent to the linear transformation used in Thurston's characterization \cite{DH_ThuChac}. Let $\lambda_p(\Gamma)$ be the leading eigenvalue of $f_{p,\Gamma}$. If $\TmapA$ has no Levy cycles, then $\lambda_p(\Gamma)$ is strictly decreasing in $p$ \cite[Lemma A.2]{HP_ThuObsConfDim}. Hence there is a unique number $Q(\Gamma)>0$ so that $\lambda_{Q(\Gamma)}(\Gamma)=1$.  Our second main result is the monotonicity of conformal energies under decompositions by multicurves.

\begin{theorem}[Monotonicity of conformal energies]\label{theorem:MonoBrCov}
Suppose $\TmapA$ is a \pcf hyperbolic-type branched covering with no Levy cycles. Let $\Gamma$ be a completely $f$-invariant multicurve such that 
\[
    \{f_i\from (S^2(i),A(i))\righttoleftarrow:i=1,2,\dots,n\}
\]
is the set of small branched coverings of $(f,\Gamma)$ with the first return times $\{\tau_i\}_{i=1,2,\dots,n}$. Then, for every $p\in[0,\infty]$, we have
\[
    \overline{E}^p(f) \ge \max\left\{ \{\left(\overline{E}^p(f_i)\right)^{1/\tau_i}:1\le i \le n\},~(\lambda_p(\Gamma))^{1/p}\right\}.
\]
Moreover, we have $\ARC(\Jcal_f) \ge Q(\Gamma)$. 
\end{theorem}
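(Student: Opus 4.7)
The plan is to establish the two lower bounds for $\overline{E}^p(f)$ separately, and then deduce the inequality $\ARC(\Jcal_f) \ge Q(\Gamma)$ from the multicurve bound together with part~(4) of Theorem~\ref{thm:PropertyOfComfEnergy}. Throughout I would work with the formulation of $\overline{E}^p$ as the exponential growth rate, in $n$, of $p$-energies of test configurations computing $f^n$; this definition bakes in the multiplicativity $\overline{E}^p(f^n) = (\overline{E}^p(f))^n$, which is used in both steps.

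For the bound $\overline{E}^p(f) \ge (\overline{E}^p(f_i))^{1/\tau_i}$, the point is that each small branched covering $f_i$ is, after accounting for the enlarged marked set $A(i)$ produced by pinching along $\Gamma$, realized as the first return of $f^{\tau_i}$ to the periodic small sphere $S^2(i)$. I would take a sequence of nearly optimal test configurations for $f^{\tau_i n}$, restrict them to the components lying over $S^2(i)$, and check that each restriction is admissible for $f_i^n$. The main verification is that the filling and weight-sum conditions survive once the pinched curves of $\Gamma$ contribute new punctures to $A(i)$. Taking $(\tau_i n)$-th roots of the resulting energy inequalities and sending $n \to \infty$ yields the stated bound.

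For the bound $\overline{E}^p(f) \ge \lambda_p(\Gamma)^{1/p}$, recall that $\lambda_p(\Gamma)$ is the Perron eigenvalue of a transition matrix encoding how the components of $\Gamma$ pull back under $f$, weighted by local degrees and the exponent $p$. The strategy is to insert weighted annular collars around the components of $\Gamma$, with weights prescribed by a Perron eigenvector, and to compute the $p$-modulus of their pullbacks under $f$. A direct calculation shows that one application of $f$ multiplies the total weighted $p$-energy of these collars by exactly $\lambda_p(\Gamma)$, so $n$-fold iteration gives a test family whose $p$-energy is comparable to $\lambda_p(\Gamma)^n$; extracting the $n$-th root yields $(\overline{E}^p(f))^p \ge \lambda_p(\Gamma)$.

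Finally, setting $p_* = \ARC(\Jcal_f)$, part~(4) of Theorem~\ref{thm:PropertyOfComfEnergy} gives $\overline{E}^{p_*}(f) = 1$, so the multicurve bound forces $\lambda_{p_*}(\Gamma) \le 1$; since $\lambda_p(\Gamma)$ is strictly decreasing in $p$ and $Q(\Gamma)$ is the exponent at which this quantity equals $1$, this yields $p_* \ge Q(\Gamma)$. I expect the hardest step to be the multicurve bound: one must promote the local collar test families to globally valid configurations for $f$ without paying a cost that erodes the Perron factor, which amounts to controlling the elastic/metric structure across the complementary small spheres uniformly in $n$ and compatibly with $A$. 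The restriction argument in the first bound is in principle cleaner but still requires care in reconciling the ambient marked set $A$ with the pinched marked set $A(i)$ of each small map $f_i$.
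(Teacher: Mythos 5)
Your outline captures the right strategy and the final deduction of $\ARC(\Jcal_f)\ge Q(\Gamma)$ from $\lambda_{p_*}(\Gamma)\le 1$ matches the paper exactly. The implementation differs from the paper's in a way worth noting, and one of your intuitions about difficulty is reversed.

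The paper does not argue by restricting nearly optimal test configurations for $f^{\tau_in}$. Instead it works in the framework of graph virtual endomorphisms and proves a single ``embedding'' monotonicity lemma (Theorem~\ref{thm:MonoGraph}): if $\theta:\Hcal\to\Gcal$ is a homotopy morphism with $\theta_0:H_0\hookrightarrow G_0$ an embedding onto a homotopically non-trivial subgraph, then $\overline{E}^p(\Hcal)\le\overline{E}^p(\Gcal)$, because $E^p_p[(\phi_H)^n_0]=E^p_p[\iota_0\circ(\phi_H)^n_0]=E^p_p[(\phi_G)^n_0\circ\theta_n]\le E^p_p[(\phi_G)^n_0]$ using sub-multiplicativity and $E^p_p[\theta_n]=1$. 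This is essentially the dual of your restriction idea, and it avoids a gap you would otherwise have to fill: restricting a near-optimal $\psi\sim(\phi_G)^{\tau_in}_0$ to $H_{\tau_in}$ produces a map with target $G_0$, not $H_0$, and one must still invoke something like Proposition~\ref{prop:EngEqualities} to say the energy toward $H_0$ is no larger than the energy toward $G_0$. The paper packages that step once and for all. Also the reconciliation of marked sets that you flag is handled cleanly: take a spine $H_0$ of $(S^2(i),A(i))$, lift it to a spine of $U(i)\subset S^2\setminus(A\cup\Gamma)$, and extend to a spine $G_0$ of $(S^2,A)$; the embedding $\iota:\Hcal\to\Gcal^{\tau_i}$ is then automatic, and Proposition~\ref{prop:ConfEngPower} converts $\overline{E}^p(\Gcal^{\tau_i})$ into $(\overline{E}^p(\Gcal))^{\tau_i}$.

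Your assessment that the multicurve bound is the hardest step is actually backwards relative to the paper's treatment. The multicurve $\Gamma$ is promoted to its own (disconnected) graph virtual endomorphism $\Gcal_\Gamma=(\Gamma_0,\Gamma_1,f_\Gamma,\phi_\Gamma)$, and Proposition~\ref{prop:EngOfMultiCurve} shows $\overline{E}^p(\Gcal_\Gamma)=(\lambda_p(\Gamma))^{1/p}$ by a short direct computation: with all circles given length one, $\Fill^p_p$ of $(\phi_\Gamma)^k_0$ over $\gamma_i$ equals the $i$-th row sum of $(A_{p,\Gamma})^k$, and the $k$-th root of the maximal row sum tends to the spectral radius $\lambda_p(\Gamma)$. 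Then the same embedding monotonicity applies. There is no need to insert weighted collars with Perron eigenvector weights and control energy leakage across the complementary small spheres; the embedding principle means you never have to ``promote'' local families to global ones. The only nontrivial hypothesis to check is that $\Gamma$ contains no Levy cycle (so that $\lambda_p(\Gamma)$ is strictly decreasing in $p$ and $Q(\Gamma)$ is well-defined), which follows from the assumption that $f$ itself has no Levy cycle.
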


\begin{rem}
    The inequality $\ARC(\Jcal_f) \ge Q(\Gamma)$ was previously known \cite[Theorem 1.5]{HP_ThuObsConfDim}.
\end{rem}

The number $(\lambda_p(\Gamma))^{1/p}$ can also be viewed as the asymptotic $p$-conformal energy $\overline{E}^p(f|_\Gamma\from \Gamma\righttoleftarrow)$ of the restricted dynamics to $\Gamma$, see Proposition \ref{prop:EngOfMultiCurve}.

\vspace{5pt}

The next conjecture follows from Conjecture \ref{conj:StrictDecr} and Theorem \ref{theorem:MonoBrCov}.

\begin{conjecture}\label{conjecture:MonoConfdim}
In the setting of Theorem \ref{theorem:MonoBrCov}, we also have
\[
    \ARC(\Jcal_f) \ge \max\left\{ \ARC(\Jcal_{f_i}):1\le i\le n \right\}.
\]
\end{conjecture}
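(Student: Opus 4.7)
The plan is to apply Theorem~\ref{theorem:MonoBrCov} at the critical exponent $p_\ast := \ARC(\Jcal_f)$ and then use Conjecture~\ref{conj:StrictDecr} to upgrade the resulting energy inequality into a comparison of exponents. Write $p_i := \ARC(\Jcal_{f_i})$ for each small branched covering. By item~(4) of Theorem~\ref{thm:PropertyOfComfEnergy}, $\overline{E}^{p_\ast}(f) = 1$ and $\overline{E}^{p_i}(f_i) = 1$. Evaluating the inequality of Theorem~\ref{theorem:MonoBrCov} at $p = p_\ast$ gives
\[
    1 \;=\; \overline{E}^{p_\ast}(f) \;\ge\; \bigl(\overline{E}^{p_\ast}(f_i)\bigr)^{1/\tau_i},
\]
so $\overline{E}^{p_\ast}(f_i) \le 1$ for every $i$. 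This is the key quantitative input; the remainder of the argument is about converting it into the exponent inequality $p_\ast \ge p_i$.

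For the conversion, I first observe that each small branched covering $f_i$ inherits the no-Levy-cycle property: a Levy cycle for $f_i$ lives as a collection of non-peripheral simple closed curves inside a single periodic component of $S^2 \setminus \Gamma$, and the definition of the first return map shows that such a family, together with its $f$-translates, is a Levy cycle for $f$ disjoint from $\Gamma$, contradicting the hypothesis of Theorem~\ref{theorem:MonoBrCov}. Hence item~(2) of Theorem~\ref{thm:PropertyOfComfEnergy} applies and yields $\overline{E}^{\infty}(f_i) < 1$. Combined with $\overline{E}^{p_i}(f_i) = 1$, this shows that $p \mapsto \overline{E}^p(f_i)$ is not constant on $[1,\infty]$, so Conjecture~\ref{conj:StrictDecr} forces it to be strictly decreasing. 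Strict monotonicity, together with $\overline{E}^{p_\ast}(f_i) \le 1 = \overline{E}^{p_i}(f_i)$, then implies $p_\ast \ge p_i$. Taking the maximum over $i$ gives the conclusion.

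The main obstacle is precisely the passage from the numerical inequality $\overline{E}^{p_\ast}(f_i) \le 1$ to the exponent inequality $p_i \le p_\ast$, which is why the conjecture is stated conditionally on Conjecture~\ref{conj:StrictDecr}: without excluding a plateau at the value $1$, one cannot deduce uniqueness of the critical exponent from the energy alone. A secondary point worth recording as a small preliminary lemma is the inheritance of the no-Levy-cycle property by the small maps $f_i$, since item~(2) of Theorem~\ref{thm:PropertyOfComfEnergy} must be applied to each of them; this follows from the construction of small branched coverings in Section~\ref{sec:MonoBrCov} but should be made explicit so that Conjecture~\ref{conj:StrictDecr} is the only unverified ingredient in the argument.
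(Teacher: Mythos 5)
Your derivation is exactly the argument the paper is alluding to when it says "the next conjecture follows from Conjecture~\ref{conj:StrictDecr} and Theorem~\ref{theorem:MonoBrCov}"; the paper gives no written proof precisely because the statement remains conditional on Conjecture~\ref{conj:StrictDecr}. Your steps are correct: evaluating the energy inequality at $p_\ast = \ARC(\Jcal_f)$ to get $\overline{E}^{p_\ast}(f_i)\le 1$, invoking item~(4) of Theorem~\ref{thm:PropertyOfComfEnergy} to get $\overline{E}^{p_i}(f_i)=1$, noting $\overline{E}^\infty(f_i)<1$ to rule out the constant alternative in Conjecture~\ref{conj:StrictDecr}, and concluding $p_\ast\ge p_i$ from strict monotonicity.

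You are also right to flag the inheritance of the no-Levy-cycle property by the $f_i$ as a point needing an explicit lemma. It is needed twice: once to make $\Jcal_{f_i}$ (hence $\ARC(\Jcal_{f_i})$) well defined via \cite{BartholdiDudko_expanding}, and once to exclude the constant case. The paper records the analogous statement for Thurston obstructions in Section~\ref{sec:MonoBrCov} ("if there is a Thurston obstruction of a small branched covering, it lifts to a Thurston obstruction of $f$") but does not explicitly state the Levy-cycle version. Your sketch (a Levy cycle of $f_i$ in $U(i)$, together with its $f$-translates along the first-return orbit, yields a Levy cycle of $\TmapA$ disjoint from $\Gamma$) is the right idea, though one should be careful that the lifted curves remain essential and pairwise non-isotopic relative to $A$ and not merely relative to $A(i)$ with its pinch-point marked points; this is standard in Pilgrim's combination theory but deserves the explicit lemma you propose. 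With that one filled-in detail, the argument is complete and matches the paper's intent.
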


The inequality in Conjecture \ref{conjecture:MonoConfdim} can be strict. Let $f_{(\theta_1,\theta_2)}$ be the post-critically finite hyperbolic quadratic polynomial whose parabolic root of the hyperbolic component is the common landing point of the parameter rays of angles $\theta_1$ and $\theta_2$, see \cite{Hubbard_vol2}. Let $f$ be the mating of two polynomials $f_1=f_{(3/7,4/7)}$ and $f_2=f_{(3/31,4/31)}$ and let $\Gamma=\{\gamma\}$ be the equator. Then $\Jcal_f$ is a Sierpi\'nski carpet so that $\ARC(\Jcal_f)>1$ by \cite{Mackay_ConfdimCarpet}. However, $\ARC(\Jcal_{f_1})=\ARC(\Jcal_{f_2})=1$ by \cite{Carrasco_ConfDimCombiMod}.

\subsection*{Sullivan's dictionary} There exists a long list of analogies, called Sullivan's dictionary, between the dynamics of rational functions and and the action of Kleinian groups. Under Sullivan's dictionary, Julia sets are compared with limit sets. Theorems in this article are comparable with the work on hyperbolic groups by Carrasco--Mackay \cite{CarrascoMackay_ConfdimGp}.

\begin{thm}[{\cite[Theorem 1.1]{CarrascoMackay_ConfdimGp}}]\label{thm:ConfdimGp}
	Let $G$ be a hyperbolic group that is not virtually free. Suppose that there is a graph of groups decomposition of $G$, with vertex groups $\{G_i\}$ and elementary edge groups. Then we have
	\[
	\ARC(\partial_\infty G) =\max\left\{ \{1\} \cup \{\ARC(\partial_\infty G_i):|G_i|=\infty\} \right\}.
	\]
\end{thm}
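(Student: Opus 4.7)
The plan is to prove the two inequalities $\ARC(\partial_\infty G) \geq D$ and $\ARC(\partial_\infty G) \leq D$ separately, where $D = \max(\{1\} \cup \{\ARC(\partial_\infty G_i) : |G_i| = \infty\})$ denotes the right-hand side. The lower bound is relatively soft. Since $G$ is hyperbolic and not virtually free, iterated Stallings-type splittings show that $\partial_\infty G$ is not totally disconnected and in fact contains a non-degenerate continuum, so $\ARC(\partial_\infty G) \geq 1$. Moreover, because each edge group is elementary, each vertex subgroup $G_i$ is quasi-convex in $G$, so $\partial_\infty G_i$ embeds quasi-symmetrically into $\partial_\infty G$ as its limit set; monotonicity of $\ARC$ under quasi-symmetric embeddings then gives $\ARC(\partial_\infty G) \geq \ARC(\partial_\infty G_i)$ for each infinite $G_i$.

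For the upper bound, I would fix any $d > D$ and aim to construct a metric $\rho$ on $\partial_\infty G$ that is quasi-symmetric to the visual metric and has Hausdorff dimension at most $d$. The natural framework is the Bass-Serre tree $T$ of the graph-of-groups decomposition: the $G$-action yields a covering of $\partial_\infty G$ by peripheral limit sets $g \cdot \partial_\infty G_i$ together with a topologically thin set of tree ends, and adjacent peripheral pieces overlap only in the at-most-two-point limit set of an elementary edge stabilizer. On each $\partial_\infty G_i$ with $|G_i| = \infty$, pick an Ahlfors $d$-regular metric quasi-symmetric to the visual one, available because $d > \ARC(\partial_\infty G_i)$, and equivariantly transport it to all translates. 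Following the combinatorial modulus framework of Keith--Kleiner and Carrasco--Mackay, assemble these local metrics into $\rho$ via a sequence of shadow coverings whose weights are compatible on overlaps.

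The main obstacle is checking that $\rho$ is globally quasi-symmetric to the visual metric, which reduces to a combinatorial $d$-modulus estimate on curve families in $\partial_\infty G$. The key geometric input is a splitting principle: any curve that travels between distinct peripheral pieces must pass through an at most two-point edge-group limit set, so at each combinatorial scale every curve decomposes into sub-arcs each supported in a single peripheral piece, while cocompactness of $G \curvearrowright T$ bounds how many pieces can meet a given ball at a given scale. Combining this splitting with the Ahlfors $d$-regularity on each peripheral piece permits one to sum modulus contributions piece by piece and produce the required bound. Letting $d \searrow D$ then concludes $\ARC(\partial_\infty G) \leq D$.
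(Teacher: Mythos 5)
This theorem is not proved in the paper. It is quoted verbatim from Carrasco--Mackay \cite{CarrascoMackay_ConfdimGp} and is used only as the Kleinian-group analogue of Theorem~\ref{theorem:MonoBrCov} and Conjecture~\ref{conjecture:MonoConfdim} in the discussion of Sullivan's dictionary, so there is no in-paper argument for your proposal to be compared against.

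On its own terms, your sketch does track the broad outline of the published Carrasco--Mackay proof: the lower bound comes from quasi-convexity of vertex groups (which holds because elementary edge groups are quasi-convex), monotonicity of conformal dimension under quasi-symmetric embeddings of limit sets, and the fact that a hyperbolic group that is not virtually free has a boundary containing a non-degenerate continuum, hence of conformal dimension at least one; the upper bound does indeed proceed via combinatorial modulus estimates organized along the Bass--Serre tree. But your upper-bound paragraph contains a genuine gap. You cite ``the combinatorial modulus framework of Keith--Kleiner and Carrasco--Mackay'' as a black box that lets you ``assemble these local metrics into $\rho$'' and then ``sum modulus contributions piece by piece.'' Those two assertions are precisely the content of the theorem you are trying to prove: establishing that the glued gauge is Ahlfors $d$-regular, that it lies in the same quasi-symmetry class as the visual metric, and that $d$-modulus of curve families crossing between peripheral pieces through at-most-two-point edge limit sets is controlled uniformly across all scales. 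You also understate the difficulty caused by the self-similar nesting of the translates $g\cdot\partial_\infty G_i$: at fine scales a ball meets many overlapping translates and the ``each sub-arc lives in a single piece'' picture no longer holds cleanly, which is where the real work of the Carrasco--Mackay argument takes place. As written, the upper bound is a plan for a proof, not a proof; to carry it out you would essentially reconstruct the Carrasco--Mackay paper.
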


Theorem \ref{theorem:MonoBrCov} and Conjecture \ref{conjecture:MonoConfdim} can be compared with Theorem \ref{thm:ConfdimGp}. However, the inequality in Conjecture \ref{conjecture:MonoConfdim} may be strict, as previously explained.

Like Theorem \ref{theorem:main}, there is also a characterization of hyperbolic groups whose boundaries have conformal dimension one.

\begin{thm}[{\cite[Corollary 1.2]{CarrascoMackay_ConfdimGp}}]\label{thm:ConfdimOneGp}
	Let $G$ be a hyperbolic group that is not virtually free and has no $2$-torsion. Then, $\ARC(\partial_\infty G)=1$ if and only if the $G$ has a hierarchical decomposition over elementary edge groups so that every vertex group is either elementary or virtually Fuchsian.
\end{thm}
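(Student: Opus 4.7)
The plan is to establish each direction separately, following the pattern of Theorem \ref{theorem:main} via Sullivan's dictionary: the monotonicity of $\ARC$ under graph-of-groups decompositions (Theorem \ref{thm:ConfdimGp}) plays the role of Theorem \ref{theorem:MonoBrCov}, and Ha\"{i}ssinsky's characterization of boundaries with $\ARC<2$ plays the role of Thurston's characterization.

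For the easy direction, suppose $G$ admits the described hierarchical decomposition. I would apply Theorem \ref{thm:ConfdimGp} recursively along the hierarchy. At each level, the infinite vertex groups are elementary (with two-point boundary, contributing $0$) or virtually Fuchsian (with boundary a circle or a Cantor set, contributing at most $1$). The max formula thus gives $\ARC(\partial_\infty G)\le 1$. On the other hand, since $G$ is not virtually free, $\partial_\infty G$ is a non-degenerate continuum, forcing $\ARC(\partial_\infty G)\ge 1$; equality follows.

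For the hard direction, assume $\ARC(\partial_\infty G)=1$. First, split $G$ over finite subgroups via Stallings to reduce to the one-ended case. Then take Bowditch's JSJ splitting of each one-ended factor over its maximal elementary subgroups, which is canonical under the no-$2$-torsion hypothesis. This classifies vertex groups as elementary, hanging Fuchsian, or rigid. By Theorem \ref{thm:ConfdimGp}, each infinite vertex group $G_i$ satisfies $\ARC(\partial_\infty G_i)\le 1$, and non-degeneracy of $\partial_\infty G_i$ forces equality. The hanging Fuchsian case already satisfies the conclusion, so the task reduces to showing that a rigid vertex group $G_i$ with $\ARC(\partial_\infty G_i)=1$ is virtually Fuchsian. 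I would apply Ha\"{i}ssinsky's theorem: $\ARC(\partial_\infty G_i)<2$ implies $G_i$ is virtually a convex cocompact Kleinian group, so $\partial_\infty G_i$ embeds topologically in $S^2$. One then classifies such embedded limit sets of $\ARC$-dimension one: by Mackay's lower bound for carpets, $\partial_\infty G_i$ cannot be a Sierpi\'{n}ski carpet (else $\ARC>1$); by the analysis of planar continua with topological dimension $1$ not splitting over points/pairs of points, $\partial_\infty G_i$ must be a circle, hence $G_i$ is virtually Fuchsian. Finally, iterating the JSJ procedure on the non-elementary factors produces a hierarchy that terminates by Delzant's accessibility theorem for hyperbolic groups.

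The main obstacle is the rigid case inside the JSJ: one must rule out rigid, non-Fuchsian, convex cocompact Kleinian vertex groups whose limit set has Ahlfors-regular conformal dimension exactly one. This is where the deepest input enters, combining Ha\"{i}ssinsky's rigidity theorem with quantitative carpet estimates \`{a} la Mackay and the topological characterization of circles among planar continua. Controlling these rigid pieces is the place where passing from a naive JSJ statement to the finer hierarchical decomposition in the conclusion really requires effort.
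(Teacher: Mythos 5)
This statement is not proven in the paper; it is cited as \cite[Corollary 1.2]{CarrascoMackay_ConfdimGp} purely to draw the Sullivan-dictionary analogy with Theorem~\ref{theorem:main}, so there is no proof in the paper against which to compare your attempt.

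Taking your sketch on its own merits as a reconstruction of Carrasco--Mackay's argument: the overall skeleton (accessibility hierarchy, Theorem~\ref{thm:ConfdimGp} for the max formula, classification of one-ended pieces) is right, but your handling of the rigid vertex groups departs from the route actually taken in \cite{CarrascoMackay_ConfdimGp}. They do not pass through Ha\"{i}ssinsky's Kleinian rigidity. The key input is instead Mackay's earlier theorem that a compact, doubling, linearly locally connected metric space with no local cut points and more than one point has Ahlfors-regular conformal dimension strictly greater than one; combined with Bowditch's theorem that the boundary of a one-ended hyperbolic group that is not virtually Fuchsian and does not split over a two-ended subgroup has no local cut points, this kills the rigid non-Fuchsian case directly. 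Your detour through Ha\"{i}ssinsky's $\ARC<2 \Rightarrow$ virtually Kleinian theorem, the Kapovich--Kleiner classification of planar limit sets, and the carpet estimate is not obviously wrong, but it is a considerably heavier and more roundabout machine for the same conclusion, and it smuggles in a planarity hypothesis that is unnecessary for the direct argument. Two smaller points: (i) in the easy direction you apply Theorem~\ref{thm:ConfdimGp} recursively, but that theorem is stated only for groups that are not virtually free, so you should separately dispose of virtually free vertex groups in the hierarchy (their boundaries are totally disconnected, so $\ARC=0$ and they contribute nothing to the max); (ii) you should also justify why elementary vertex groups and edge groups do not spoil the max formula, since Theorem~\ref{thm:ConfdimGp} only records the contribution of infinite vertex groups.
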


\subsection*{Finite subdivision rules}
A {\it finite subdivision rule} is a version of Markov partition for branched coverings of the sphere. It consists of a CW-complex structure on the sphere, called a {\it subdivision complex}, and a description on how the CW-complex changes by the pull-back of a branched self-covering of the sphere, which is called a {\it subdivision map}. It was originally developed to understand the boundaries of Gromov hyperbolic groups in an attempt to prove the Cannon's conjecture \cite{finite_suvdivision_rule_first}. Later, it turned out that finite subdivision rules are a useful combinatorial tool to describe the dynamics of rational maps \cite{ConstructingRatFromSubdiv, ConstructingSubdivFromRat}. 

Finite subdivision rules are also very useful to detect the existence of Levy cycles \cite{finite_subdivision_rule_exp1, finite_subdivision_rule_exp2,Park_Levy}. According to \cite{BartholdiDudko_expanding}, the non-existence of Levy cycles is equivalent to the dynamical systems being expanding.

In this article, we use finite subdivision rules that are defined by the zero-entropy graphs of crochet maps in \cite{DHS_Decomp}. From the dual $1$-skeletons of the subdivision complexes, in Section~\ref{sec:AsympConfEng}, we construct graph virtual endomorphisms, which are necessary for calculating asymptotic conformal energies. In Section~\ref{sec:DualRecSklton}, we study separated recurrence, a property of finite subdivision rules that reflects the expanding behavior of the subdivision map.

\subsection*{Order theory} When estimating $\overline{E}^p(f)$, we utilize the preorder $\preceq$ on the edge set ${\rm Edge}(S_\R)$ defined by the dynamics of $f$. We extend $\preceq$ to a total preorder $\preceq^*$ satisfying $e\preceq^* e'$ and $e'\preceq^* e$ whenever $e$ and $e'$ are incident to a common Julia vertex. To this end, we discuss a version of order extension theorem in Theorem \ref{thm:PreorderExt}, which is then applied in Theorem \ref{thm:TotalPreorderGraphs} and \ref{thm:TotalPreorderCrochet} for the dynamics of finite subdivision rules of crochet maps.

\subsection*{Non-hyperbolic \pcf crochet maps.}
The proofs of the main theorems in this paper rely on two theories: the theory of asymptotic $p$-conformal energies by D.~Thurston, and the decomposition theory by Dudko--Hlushchanka--Schleicher. The decomposition theory applies to non-hyperbolic rational maps as well. However, the generalization of asymptotic $p$-conformal energies to non-hyperbolic rational maps is still a work in progress. The author expects that the proofs in this paper would remain valid for non-hyperbolic crochet maps using the generalized notion of asymptotic $p$-conformal energies. See Section~\ref{sec:non-hyp} for further details.

\subsection*{Proof of ``crochet map $\Leftarrow$ ARC.dim=1''} If $f$ is not a crochet map, then by \cite{DHS_Decomp} there exists an $f$-invariant multicurve $\Gamma$ such that either (a) $\Gamma$ is Cantor multicurve or (b) at least one small rational map, say $f_i$, of $f$ has a Sierpi\'{n}ski carpet Julia set, see Theorem \ref{thm:DHScrochet}. We have $Q(\Gamma)>1$ for the case (a) and $\overline{E}^1(f_i)>1$ for the case (b) by \cite{PilThu_ARConfDim}. Then $\ARC(\Jcal_f)>1$ follows from the monotonicity theorem (Theorem \ref{theorem:MonoBrCov}), and properties (4) and (5) in Theorem \ref{thm:PropertyOfComfEnergy}.
\begin{align*}
    \begin{minipage}{10cm}
        Overview of post-critically finite rational maps\\
        Monotonicity of conformal energies (Theorem \ref{theorem:MonoBrCov})\\
        Crochet decomposition theory
    \end{minipage}&\minibox{Section \ref{sec:Obstructions}\\ Section \ref{sec:AsympConfEng}\\Sections \ref{sec:crochet}}
\end{align*}

\subsection*{Proof of ``crochet map $\Rightarrow$ ARC.dim=1''} We will show that if $f$ is a crochet map, then $\overline{E}^p(f)<1$ for every $p\in(1,\infty]$. Then by Theorem \ref{thm:PropertyOfComfEnergy} we have $\ARC(\Jcal_f)=1$. 

To define the asymptotic $p$-conformal energy $\overline{E}^p(f)$, we need to choose (a) a graph $H_0$ that is a deformation retract to a certain punctured sphere and (b) a $p$-length structure on $H_0$.

We begin with the zero-entropy graph $G$ constructed in \cite{DHS_Decomp}, and then consider the finite subdivision rule whose $1$-skeleton is $G(\subset \hat{\mathbb{C}})$. In Section~\ref{sec:naturalmap}, we obtain the desired graph $H_0$ via the $\origboxtimes$-deformation to the dual graph of $G$.

On the other hand, we investigate the construction of the zero-entropy graph $G$ and show that there exists a total preorder on $\Edge(G)$ satisfying a certain property. By utilizing this total preorder, we define a $p$-length structure $\alpha_K$ on $H_0$ that satisfies a certain condition, referred to as the {\it $K\,\hat{}$-expanding} property for $K>1$.

By using $H_0$ and $\alpha_K$, we first obtain an upper bound for $\overline{E}^p(f)$ that is slightly greater than one. We then refine this upper bound to be less than one by applying a suitably small homotopy and choosing $K$ sufficiently large, relying on the {\it separated recurrence} property of finite subdivision rules.

\begin{align*}
     \begin{minipage}{10cm}
        The zero-entropy graph $G$ and total preorder \\ Finite subdivision rules and separated recurrence \\ $\origboxtimes$-deformations and estimation of $\overline{E}^p(f)$
     \end{minipage} & \minibox{Sections \ref{Sec:GraphExt and Preorder},\ref{sec:crochet}\\Sections \ref{sec:FSR},\ref{sec:DualRecSklton} \\ Sections \ref{sec:AsympConfEng},\ref{sec:AsympConfEngCro}
    }
\end{align*}









\subsection*{Acknowledgment}

This works is based on author's Ph.D. thesis at Indiana University Bloomington. The author particularly thanks Dylan Thurston and Kevin Pilgrim for their constant support and encouragement throughout the graduate school years. The author also thanks Laurent Bartholdi, Mario Bonk, Dzmitry Dudko, Mikhail Hlushchanka, Jeremy Kahn, Dierk Schleicher, and anonymous referees for useful comments.

The author was supported by the Simons Foundation Institute Grant Award ID 507536 while the author was in residence at the Institute for Computational and Experimental Research in Mathematics in Providence, RI.

\section{Obstructions of branched coverings}\label{sec:Obstructions}

In this section, we review basic definitions and well-known theorems on post-critically finite branched coverings of the $2$-sphere.

A continuous map $f\from S^2 \rightarrow S^2$ is a {\it topological branched covering} if it is locally $z \mapsto z^d$ for some integer $d>0$ under suitable local coordinates. A point $x \in S^2$ is a {\it critical point} of $f$ if $f$ is not locally injective at $x$. The {\it critical set} $\Crit(f)$ of $f$ is the set of all critical points, and its forward orbit $P_f:=\bigcup_{k=1}^\infty f^{k}(\Crit(f))$ is the {\it post-critical set}, where $f^k$ means the $k^{th}$-iteration $f\circ f\circ \dots \circ f$ of $f$. If $P_f$ is finite, we call $f$ a {\it \pcf}branched covering.

For a \pcf branched covering $f$, a finite set $A \subset S^2$ is called a {\it set of marked points} if $f(A) \cup P_f \subset A$. Every element $a\in A$ is called a {\it marked point}. We say that $\TmapA$ is \pcf branched covering {\it with a set of marked points $A$}. When the set of marked points $A$ is understood from the context, we simply call $\TmapA$ a branched covering, and we add more adjectives when they need to be recalled or specified.

\begin{defn}[Combinatorial equivalence]
We say that two branched coverings $f\from (S^2,A)\righttoleftarrow$ and $g\from (S^2,B)\righttoleftarrow$ are {\it combinatorially equivalent (\,by $\phi_0$ and $\phi_1$)} if there exist orientation-preserving homeomorphisms
\[
    \phi_0,\phi_1\from (S^2,A) \rightarrow (S^2,B)
\]
such that (a) $\phi_0(A)=\phi_1(A)=B$, (b) $\phi_0$ and $\phi_1$ are homotopic relative to $A$, and (c) the following diagram commutes:
\[
\begin{tikzcd}
    (S^2,A) \arrow[r,"\phi_0"] \arrow[d,"f"]
& (S^2,B) \arrow[d,"g"] \\
(S^2,A) \arrow[r,"\phi_1"]
& (S^2,B)
\end{tikzcd}
\]
\end{defn}

\subsection*{Fatou and Julia marked points, hyperbolic-type branched coverings} Let $\TmapA$ be a branched covering. A marked point $a\in A$ is a {\it Fatou point} if $f^k(a)$ is a periodic critical point for some $k\ge 0$. Otherwise $a$ is a {\it Julia point}. The branched covering $\TmapA$ is of {\it hyperbolic-type} if every marked point is a Fatou point. We remark that the hyperbolicity depends not only on the covering map but also on the set of marked points.

\vspace{5pt}
Let $A\subset S^2$ be a finite set. For $a \in A$, a simple closed curve $\gamma$ is a {\it peripheral loop} of $a$ if $\gamma$ bounds a disk $D$ with $D \cap A=\{a\}$. In this case, we say that $\gamma$ is {\it peripheral to $a\in A$}. A simple closed curve $\gamma\subset S^2 \setminus A$ is {\it essential relative to A} if  $\gamma$ is neither homotopically trivial relative to $A$ nor peripheral to any $a\in A$. A {\it multicurve} $\Gamma$ of $(S^2,A)$ is a collection of disjoint essential simple closed curves (or their homotopy classes, depending on the context) that are non-isotopic to each other relative to $A$.

\subsection*{Invariance of graphs and multicurves.}
Let $f\from (S^2,A)\righttoleftarrow$ be a branched covering.
\begin{itemize}
    \item A graph $G\subset S^2$ is {\it forward $f$-invariant up to isotopy relative to $A$} if there exist a subgraph $H$ of $f^{-1}(G)$ and a homeomorphism $\phi\from S^2\to S^2$ such that $\phi(H)=G$ and $\phi$ is isotopic to the identity map relative to $A$.
    \item A graph $G$ is {\it forward $f$-invariant}, or shortly {\it $f$-invariant}, if $f(G)\subset G$ and $f(\V(G))\subset \V(G)$.
    \item A multicurve $\Gamma$ on $(S^2,A)$ is {\it forward $f$-invariant up to isotopy relative to $A$} if it is so as a graph, i.e., for every $\gamma\in \Gamma$, there exists $\gamma'\in \Gamma$ such that $\gamma$ is isotopic to a connected component of $f^{-1}(\gamma')$.
    \item A multicurve $\Gamma$ is {\it backward $f$-invariant up to isotopy relative to $A$}, or {\it $f$-stable}, if every connected component of $f^{-1}(\gamma)$ for $\gamma \in \Gamma$ is either isotopic to an element of $\Gamma$ or inessential relative to $A$.
    \item A multicurve $\Gamma$ is {\it completely $f$-invariant up to isotopy relative to $A$} if $\Gamma$ is both forward and backward invariant under $f$ up to isotopy relative to $A$.
\end{itemize}
When the set of marked points $A$ is understood, we omit the phrase ``relative to $A$''. Multicurves are always considered up to isotopy relative to $A$, so we also omit the phrase ``up to isotopy'' when referring to multicurves.

\subsection*{Linear $p$-transformations}

Let $f\from (S^2,A)\righttoleftarrow$ be a branched covering and $\Gamma$ be a multicurve of $(S^2,A)$. For $p\in[1,\infty]$, the {\it linear $p$-transformation of $\Gamma$} is a linear map $f_{p,\Gamma}\from \Rbb^\Gamma \to \Rbb^\Gamma$ defined by
\[
    f_{p,\Gamma}[\gamma_i]=\sum_{\gamma_j\in \Gamma}\left(\sum_{\gamma_j\sim\gamma\subset f^{-1}(\gamma_i)} \left(\deg(f|_\gamma\from \gamma\to\gamma_i)\right)^{1-p}\right)[\gamma_j],
\]
where $[\gamma_i]$ means the isotopy class of $\gamma_i$ and the inner sum is taken over connected components $\gamma$ of $f^{-1}(\gamma_i)$ that are isotopic to $\gamma_j$. We define the inner sum to be zero if there is no such $\gamma$. For $p=\infty$, we define
\[
    f_{\infty,\Gamma}[\gamma_i]:=\lim_{p\to\infty} f_{p,\Gamma}[\gamma_i].
\]
Note that $\lim_{p\to \infty}\left(\deg(f|_\gamma\from \gamma\to\gamma_i)\right)^{1-p}$ is either $1$ or $0$ depending on whether $\deg(f|_\gamma\from \gamma\to\gamma_i)$ is $1$ or greater than $1$.

Denote by $\lambda_p(\Gamma)$ the leading eigenvalue of $f_{p,\Gamma}$, which is a non-negative real number by the Perron-Frobenius theorem.

Under a suitable choice of ordered basis, $f_{p,\Gamma}$ can be represented as an upper block triangular matrix whose entries are non-negative integers.
\begin{equation*}
    \begin{pmatrix}
    A_{p,1} & * & \dots & * & *\\
        0 & A_{p,2} & \dots & * & *\\
        \vdots & \vdots & \ddots& \vdots & \vdots\\
        0& 0 & \dots & A_{p,k-1} & *\\
        0& 0 & \dots & 0 & A_{p,k}
    \end{pmatrix}    
\end{equation*}
Let $A_{p,1},A_{p,2},\dots,A_{p,k}$ be the square matrices on the diagonal and $\lambda_{p,i}$ be the leading eigenvalue of $A_{p,i}$. Then $\lambda_p(\Gamma)=\max_i \lambda_{p,i}$. Each $A_{p,i}$ is either zero or irreducible. For each $A_{p,i}$, there is a corresponding sub-multicurve $\Gamma_i$ of $\Gamma$, possibly $\Gamma_i=\Gamma$ when $k=1$, such that $f_{p,\Gamma_i}=A_{p,i}$. Note that $A_{p,i}$ is irreducible for some $p\in [1,\infty)$ if and only if $A_{p,i}$ is irreducible for every $p\in[1,\infty)$. We say that $\Gamma_i$ is {\it irreducible} if $A_{p,i}$ is irreducible for $p\in [1,\infty)$.

It follows from the definition that every $A_{1,i}$ and $A_{\infty,i}$ consist of non-negative integers. Hence, for $p\in \{1,\infty\}$, if $A_{p,i}$ is non-zero, then $\lambda_{p,i} \ge 1$, see \cite[Lemma 3.8]{Park_Levy}. If $A_{\infty,i}$ is non-zero, then $\Gamma_i$ contains a {\it Levy cycle}, which is defined as follows.
\begin{defn}[Levy cycle]
Let $\TmapA$ be a branched covering. A multicurve $\Gamma=\{\gamma_1,\gamma_2,\dots,\gamma_n\}$ of $(S^2,A)$ is a {\it Levy cycle} if, for every $i$ (mod $n$), there is a connected component $\gamma_{i+1}'$ of $f^{-1}(\gamma_i)$ that is isotopic relative to $A$ to $\gamma_{i+1}$ such that $\deg(f|_{\gamma_{i+1}'})=1$.
\end{defn}
There are two cases \cite[Lemma A.2]{HP_ThuObsConfDim}:
\begin{itemize}
    \item [(1)] If $A_{p,i}=0$ for every $1\le i \le k$, then $f_{p,\Gamma}$ is nilpotent and $\lambda_p(\Gamma)=0$ for every $p\in[1,\infty]$;
    \item [(2)] If at least one $A_{p,i}$ is irreducible, then $\lambda_1(\Gamma)\ge 1$ and
    \begin{itemize}
        \item [(2-i)]  $\lambda_\infty(\Gamma)=0$ and $\lambda_p(\Gamma)$ is strictly decreasing in $p$, or 
        \item [(2-ii)] $\lambda_\infty(\Gamma)\ge1$ and $\Gamma$ contains a Levy cycle.
    \end{itemize} 
\end{itemize}

Assume that $\Gamma$ does not contain any Levy cycles but does contain an irreducible sub-multicurve. It follows from (2) above that there is a unique $Q(\Gamma)\ge 1$ with $\lambda_{Q(\Gamma)}(\Gamma)=1$. We say that $Q(\Gamma)$ is the {\it critical exponent} of $\lambda_p(\Gamma)$ as a function of $p$. The critical exponent $Q(\Gamma)$ can also be defined as the conformal dimension of a Cantor circle, which is the product space of a Cantor set and a circle, associated with the pair $(f,\Gamma)$. See \cite[p.\@ 55]{PilThu_ARConfDim}.

An $f$-stable multicurve $\Gamma$ with $\lambda_2(\Gamma)\ge 1$ is called a {\it Thurston obstruction}, which prevents $\TmapA$ from being combinatorially equivalent to a \pcf rational map \cite{DH_ThuChac}.

\begin{rem}
    In Proposition \ref{prop:EngOfMultiCurve}, we allow $\Gamma$ to contain peripheral loops around Fatou marked points. The above discussion still holds for this generalization.
\end{rem}

\section{Finite subdivision rules}\label{sec:FSR}

A finite subdivision rule is a recursive formula to generate a sequence of subdivisions of a given CW-complex. In this section, we review basic definitions of finite subdivisions rules and introduce new notions, such as bands, which will be used in subsequent sections. Then, we define directed graphs encoding the dynamics of finite subdivision rules.

\subsection{Definitions of finite subdivision rules}

A {\it finite subdivision rule} $\R$ consists of
\begin{itemize}
    \item a finite CW-complex structure $S_\R$ of the sphere $S^2$ whose 2-cells have at least two 1-cells on their boundaries,
    \item a subdivision $\R(S_\R)$ of $S_\R$, and
    \item a continuous map $f\from \R(S_\R)\to S_\R$ that is {\it cellular}, i.e., it sends each open cell onto an open cell homeomorphically. We call $f$ the {\it subdivision map} of $\R$.
\end{itemize}
See Figure \ref{fig:BasilicaFSR} for an example.

By considering $\R(S_\R)$ and $S_\R$ as different CW-complex structures on the same sphere $S^2$, we see $f$ as a map from $S^2$ to itself. A finite subdivision rule $\R$ is {\it orientation preserving} (resp.\@ {\it reversing}) if $f$ is orientation preserving (resp.\@ reversing). In this paper, we always assume $\R$ is orientation preserving. Because the subdivision map $f\from \R(S_\R) \to S_\R$ is cellular, $f$ is a \pcf branched covering of the sphere with $P_f \subset \V(S_\R)$.

We refer to closed $0$-cells, $1$-cells, and $2$-cells as {\it vertices}, {\it edges}, and {\it tiles} respectively. For a CW-complex $X$, the $1$-skeleton $X^{(1)}$ is a CW-subcomplex of $X$ consisting of all the 0-cells and 1-cells of $X$.

\subsection*{Subdivision complexes.} Consider the subdivision map $f\from \R(S_\R) \to S_\R$ of a finite subdivision rule $\R$. By subdividing the range $S_\R$ to $\R(S_\R)$ and pulling it back via $f$, we obtain a further subdivision $\R^2(S_\R)$ of the domain $\R(S_\R)$. By iterating this process, for each $n\ge0$, we obtain the {\it level-$n$ subdivision complex $\R^n(S_\R)$}. A {\it level-$n$ vertex}, a {\it level-$n$ edge}, or a {\it level-$n$ tile} is a vertex, an edge, or a tile of $\R^n(S_\R)$, respectively.

\subsection*{$\R$-complexes} For a finite subdivision rule $\R$, a CW-complex $X$ is a {\it $\R$-complex} if there is a cellular continuous map $h\from X \to S_\R$. By pulling back the level-$n$ subdivision complex $\R^n(S_\R)$ via $h$, we obtain the level-$n$ subdivision complex $\R^n(X)$ of $X$.

\subsection*{Edge and tile types}
Consider the subdivision map $f\from \R(S_\R) \to S_\R$ of a finite subdivision rule $\R$. For $k\ge 2$, a {\it polygon (or $k$-gon)} $X$ is a CW-complex whose underlying space is homeomorphic to the closed $2$-disk such that $X$ has one $2$-cell and $k$ edges and vertices on the boundary. Let $t$ be a closed level-$0$ tile. There is a characteristic map $\phi_t\from \bft \to t$ where $\bft$ is a polygon. We refer to $\bft$ as the {\it tile type} of $t$. {\it Edge types} are similarly defined as the domains $\bfe$ of characteristic maps $\phi_e\from \bfe \to e$ for level-$0$ edges $e$. Note that every edge or tile type is a $\R$-complex. Hence, the level-$n$ subdivisions $\R^n(\bfe)$ and $\R^n(\bft)$ of an edge type $\bfe$ and a tile type $\bft$ are well-defined. We define level-$n$ subdivisions $\R^n(e)$ and $\R^n(t)$ of $e$ and $t$ as the images of $\R^n(\bfe)$ and $\R^n(\bft)$ under characteristic maps, respectively.

There is a 1-1 correspondence between tile (resp.\@ edge) types and level-$0$ tiles (resp.\@ edges). We often use these terms interchangeably, but when distinction is needed, we use bold letters for edge or tile types and regular letters for edges or tiles in the subdivision complex.

For a level-$0$ tile $t$, a level-$n$ tile $t^n$ is {\it of type $t$ } (or {\it type $\bft$}) if $f^n(t^n)=t$. A level-$n$ edge $e^n$ is {\it of type $e$} (or {\it type $\bfe$}) if $f^n(e^n)=e$ where $e$ is a level-$0$ edge. Note that we use level-$0$ edges or tiles even when referring to their types. While this could be a bit confusing, it significantly simplifies the notations in later sections.

If $\deg(f)=d$, then for every $n\ge 0$, there are $d^n$ level-$n$ tiles or edges of the same type. If $t^n$ is of type $t$, then there is a characteristic map $\phi_{t^n}\from \bft \to t^n$ so that
\[
    f^n \circ \phi_{t^n}=\phi_t\from \bft \to t\subset S_\R.
\]
For $n>m\ge0$, a level-$n$ tile $t^n$ is a {\it subtile} of a level-$m$ tile $t^m$ if $t^n\subset t^m$, and a level-$n$ edge $e^n$ is a {\it subedge} of a level-$m$ edge $e^m$ if $e^n \subset e^m$.

As stated above, our convention is to use superscripts to indicate the levels of edges and tiles. Since level-$0$ objects are frequently used, we often omit the superscript $^0$ for level-$0$ objects.

\subsection*{Incidence among tiles, edges, and vertices}
Consider a level-$n$ tile $t^n$. If a level-$n$ vertex $v^n$ is a vertex of $t^n$, we say that $t^n$ is {\it incident} to $v^n$ and vice versa. Let $\phi_{t^n}\from \bft \to t^n$ be the characteristic map of $t^n$. We define the {\it degree of incidence} of $t^n$ at $v^n$ to be $|\phi_{t^n}^{-1}(v^n)|$. We say that $t^n$ is {\it singly incident} to $v^n$ if the degree of incidence is one, and {\it multiply incident} to $v^n$ otherwise. We also say that $t^n$ is {\it doubly} incident and {\it triply} incident to $v^n$ if the degree of incidence is two and three, respectively. We define the degree of incidence of $t^n$ at $v^n$ as zero if $t^n$ is not incident to $v^n$. Similarly, we define the {\it degree of incidence} of a level-$n$ tile $t^n$ to a level-$n$ edge $e^n$ by the number of edges of $\bft$ mapped onto $e^n$ by the characteristic map $\phi_{t^n}\from \bft \to t^n$. The degree of incidence of $t^n$ to $e^n$ is always zero, one, or two.

\subsection*{Sides of edges and corners at vertices}
Recall that $\R^n(S_\R)^{(1)}$ denotes the $1$-skeleton of the level-$n$ subdivision complex $\R^n(S_\R)$.

For each level-$n$ edge $e^n$, take a small closed disk neighborhood $D(e^n)$ of $e^n$ with $e^n = D(e^n) \cap \R^n(S_\R)^{(1)}$. Define a set ${\rm Side}(e^n):=\pi_0(D(e^n)\setminus e^n)$. Then $|{\rm Side}(e^n)|=2$. Each element of ${\rm Side}(e^n)$ is called a {\it side} of $e^n$, which represents a connected component of $D(e^n)\setminus e^n$. For each $s\in {\rm Side}(e^n)$, we define {\it the level-$n$ tile on the side $s$ of $e^n$}, denoted by ${\rm Tile}(e^n,s)$, as the level-$n$ tile that contains the connected component of $D(e^n)\setminus e^n$ represented by $s$.

Consider a level-$n$ tile $t^n$ with the characteristic map $\phi_{t^n}\from \bft \to t^n$. Then, there exists a natural bijection between the boundary edges $\bfe$'s of $\bft$ and the pairs $(e^n,s^n)$'s where $e^n$ is a boundary edge of $t^n$ and $s^n\in {\rm Side}(e^n)$ with ${\rm Tile}(e^n,s^n)=t^n$. We call the edge $\bfe$ corresponding to $(e^n,s^n)$ the {\it lift} of $(e^n,s^n)$ via $\phi_{t^n}$.

For each level-$n$ vertex $v^n$, we choose a small closed disk neighborhood $D(v^n)$ such that, for every level-$n$ edge $e^n$, we have $e^n\cap D(v^n)\neq \emptyset$ if and only if $v^n\in e^n$. Define a set ${\rm Corner}^n(v^n):= \pi_0 \left( D(v^n)\setminus \R^n(S_\R)^{(1)}\right)$. Then $|{\rm Corner}^n(v^n)|=\deg_{\R^n(S_\R)}(v^n)$. Each element of ${\rm Corner}^n(v^n)$ is referred to as a {\it (level-$n$) corner} at $v^n$, which represents a connected component of $D(v^n)\setminus (\R^n(S_\R))^{(1)}$. For each $c \in {\rm Corner}^n(v^n)$, we define {\it the level-$n$ tile containing the corner $c$ of $v^n$}, denoted by ${\rm Tile}(v^n,c)$, as the level-$n$ tile that contains the connected component of $D(v^n)\setminus (\R^n(S_\R))^{(1)}$ represented by $c$.

For any $n>m$, each level-$m$ vertex $v^m$ is also a vertex of $\R^n(S_\R)$. However, the set of corners at $v^m$ may differ between levels; in general, we have $|{\rm Corner}^n(v^m)|\ge|{\rm Corner}^m(v^m)|$, and the inequality may be strict.

Below are some natural relations among sides and corners.
\begin{itemize}
    \item For $e^m \in \Edge(\R^m(S_\R))$ and $e^n \in \Edge(\R^n(S_\R))$ with $n>m>0$ and $e^n \subset e^m$, there is a bijection ${\rm Side}(e^n) \leftrightarrow {\rm Side}(e^m)$.
    \item For all $n>m\ge0$ and $v^m \in \V(\R^m(S_\R))$, there is a surjection ${\rm Corner}^n(v^m) \twoheadrightarrow {\rm Corner}^m(v^m)$. We note that $|{\rm Corner}^n(v^m)|=\deg_{\R^n(S_\R)}(v^m) \ge \deg_{\R^m(S_\R)}(v^m)=|{\rm Corner}^m(v^m)|$. The equality holds for periodic Julia vertices (Lemma \ref{lem:DegVertices}).
    \item For $e^n \in \Edge(\R^n(S_\R))$ and a vertex $v^n$ of $e^n$, there is a map ${\rm Side}(e^n)\to {\rm Corner}^n(v^n)$, which is an injection if $\deg_{\R^n(S_\R)}(v^n)\ge 2$.
    \item For $e^0 \in \Edge(S_\R)$ and $v^n \in \V(\R^n(S_\R))$ with $v^n \in {\rm int}(e^0)$ and $n\ge1$, there is a surjection ${\rm Corner}^n(v^n)\twoheadrightarrow {\rm Side}(e^0)$.
\end{itemize}

\subsection*{Finite subdivision rules associated with \pcf rational maps}

\begin{defn}[Finite subdivision rules associated with \pcf rational maps]\label{defn:FSR Asso to PCF}
    Let $f$ be a \pcf rational map. Suppose that $G$ is an $f$-invariant connected graph with $f(\V(G))\cup P_f\subset \V(G)$. Then, one can define a finite subdivision rule $\R$ such that the $1$-skeletons of the level-$0$ and level-$1$ subdivision complexes, $S_\R^{(1)}$ and $(\R(S_\R))^{(1)}$, are equal to $G$ and $f^{-1}(G)$, respectively, and $f$ is the subdivision map. We refer to $\R$ as a finite subdivision rule {\it associated with a \pcf rational map $f$ (and its invariant graph $G$)}.
\end{defn}

Let $\R$ be a finite subdivision rule. The statement that $\R$ is the finite subdivision rule associated with a \pcf rational map $F$ is stronger than the statement that its subdivision map $f$ is combinatorially equivalent to a \pcf rational map $F$. The latter is equivalent to the existence of a connected graph $H$ with $f(\V(H)) \cup P_f \subset \V(H)$ that is forward $F$-invariant only up to isotopy relative to $P_F$, whereas the former requires the existence of such a graph that is invariant without passing to isotopy.

\subsection*{Expanding conformal orbifold metrics}
A key feature distinguishing \pcf rational maps from general topological branched coverings is the expanding property with respect to orbifold metrics. We give a brief explanation; see \cite[Section 19]{Milnor_Book} for further details.

For a \pcf rational map $f$, its {canonical orbifold} $(\hCbb,\nu)$ is defined by the ramification function $\nu\from \hCbb \to \{1,2,\dots,\infty\}$, which is defined by
\[
    \nu(x):={\rm lcm}\{\deg_y(f):f^n(y)=(x) {\rm ~for~some~}n>0\}.
\]
Then, the function $\nu$ satisfies (a) $\nu(x)=1$ for $ x \notin P_f$ and (b) $\nu(y) \deg_y(f):\nu(x)$ for each $y\in f^{-1}(x)$. 

The universal cover $\widetilde{S}$ of $(\hCbb,\nu)$ is biholomorphic to either $\Cbb$ or $\Dbb$. We equip $\widetilde{S}$ with the Euclidean metric if $\widetilde{S}\cong \Cbb$ and the hyperbolic metric if $\widetilde{S}\cong \Dbb$, and then descend the metric to $(\hCbb,\nu)$ to obtain the {\it canonical orbifold metric} $\mu$ on $(\hCbb,\nu)$.

The metric $\mu$ is contracting under lifting in the following sense: Define $V_\infty:=\{x\in \hCbb : \nu(x)=\infty\}$. Suppose that $\alpha$ is a $\mu$-rectifiable curve in $\hCbb\setminus V_\infty$ whose length $l_\mu(\alpha)$ with respect to $\mu$ satisfies  $l_\mu(\alpha)<C$ for some $C>0$. For each lift $\widetilde{\alpha}$ of $\alpha$ through $f$, we have
\begin{equation}\label{eqn:Contract_Conf.metric}
    l_\mu(\widetilde{\alpha})< D\,l_\mu(\alpha),
\end{equation}
where the constant $D\in(0,1)$ depends only on $C$. This property can be viewed as forward-expanding (or backward-contracting), and it gives rise to a notion of combinatorial expansion for finite subdivision rules, called separated recurrence; see Theorem~\ref{thm:SepRecurr}.

\subsection*{Bands} Consider a finite subdivision rule $\R$ and its level-$n$ subdivision complexes $\R^n(S_\R)$. We introduce the notion of {\it bands}, which capture the idea of connecting two boundary edges of a tile within the tile.

A level-$n$ {\it band} is a tuple $b^n=\bandn$ where (a) $t^n$ is a level-$n$ tile and (b) $e^n_1,e^n_2$ are boundary edges of $t^n$ and $s^n_1\in {\rm Side}(e^n_1),s^n_2\in {\rm Side}(e^n_2)$ such that $t^n={\rm Tile}(e^n_1,s^n_1)={\rm Tile}(e^n_2,s^n_2)$ and $(e^n_1,s^n_{1})\neq (e^n_2,s^n_{2})$. The equality $e^n_1=e^n_2(=:e^n)$ can happen only when $t^n$ is doubly incident to $e^n$. The edges $e^n_1$ and $e^n_2$ are called the {\it side edges} of the band $b^n$. There are many level-$n$ bands whose side edges are not subedges of level-$0$ edges, which are not of our interest. So we additionally assume that (c) $e^n_1$ and $e^n_2$ are subedges of level-$0$ edges.

We can also define a level-$n$ band by using a triple $(t^n;\bfe_1,\bfe_2)$ where $\bfe_1$ and $\bfe_2$ are two boundary edges of the tile type $\bft$ of $t^n$. Then, $(t^n;\bfe_1,\bfe_2)$ can be bijectively corresponded with $\bandn$ via the characteristic map $\phi_{t^n}\from \bft \to t^n$, where for $i\in\{1,2\}$, $\bfe_i$ is the lift of $(e^n_i,s^n_i)$ via $\phi_{t^n}$.

If $t^n$ is singly incident to $e^n_i$ for $i \in \{1,2\}$, then there exists a unique side $s^n_i$ of $e^n_i$ such that $t^n = \mathrm{Tile}(e^n_i, s^n_i)$. In this case, it is redundant to specify the side $s^n_i$ in the definition of the band $b^n$.

For $n>m\ge0$, a level-$n$ band $\bandn$ is a {\it subband} of a level-$m$ band $\bandm$ if $t^n\subset t^m$ and for each $i\in \{1,2\}$, $e^n_i\subset e^m_i$ and $s_i^n\mapsto s_i^m$ under the natural bijection ${\rm Side}(e^n_i)\leftrightarrow {\rm Side}(e^m_i)$.

A {\it band type} is a level-$0$ band. Let $b=\band$ be a level-$0$ band. A level-$n$ band $b^n=\bandn$ is {\it of type $b$} if $f^n(b^n)=b$, i.e., $f^n(t^n)=t$, $f^n(e^n_i)=e_i$, and $f^n(s^n_i)=s_i$ for $i \in \{1,2\}$. When $f^n(e_i^n) = e_i$, the map $f^n$ naturally induces a bijection $f^n \colon \mathrm{Side}(e_i^n) \to \mathrm{Side}(e_i)$.

\subsection*{Julia or Fatou vertices, edges, and tiles.} A vertex $v \in \V(\R^n(S_\R))$ is a {\it Fatou vertex} if its forward orbit contains a periodic critical point of the subdivision map $f$. Otherwise, we call $v$ a Julia vertex. We denote by $\V_F(\R^n(S_\R))$ (resp.\@ $\V_J(\R^n(S_\R))$) the set of level-$n$ Fatou (resp.\@ Julia) vertices. For $v\in \R^n (S_\R)$, we denote by $\deg_{\R^n(S_\R)}(v)$ the degree of $v$ as a vertex of the graph $\R^n(S_\R)^{(1)}$.

A vertex of $\R^n(S_\R)$ is periodic if it is a periodic point of the subdivision map $f$. For every $n,m>0$, we have $f^m(\V(\R^{n+m}(S_\R)))=\V(\R^n(S_\R))$. Hence, every level-$n$ vertex that is periodic is also a level-$0$ vertex.

\begin{lem}\label{lem:DegVertices}
    Consider a periodic vertex $v\in \V(S_\R)$. If $v$ is a Julia vertex, then
    \begin{enumerate}
        \item $\deg_{S_\R}(v)=\deg_{S_\R}(w)$ for every $w\in \V(S_\R)$ in the periodic cycle of $v$ and
        \item $\deg_{\R^n(S_\R)}(v)=\deg_{S_\R}(v)$ for every $n\ge0$.
    \end{enumerate}
    If $v$ is a Fatou vertex, then $\lim_{n\to \infty}\deg_{\R^n(S_\R)}(v)= \infty$.
\end{lem}
\begin{proof}
    The lemma follows from the fact that for each $n\ge0$, we have
    \begin{equation}\label{eqn:Deg Vertex SubdivCplx}
        \deg_{\R^{n+1}(S_\R)}(v)=\deg_v(f)\cdot \deg_{\R^n(S_\R)}(f(v)),
    \end{equation}
    where $\deg_v(f)$ is the local degree of $f$ at $v$.
\end{proof}

\begin{lem}\label{lem:Edge_inc_PerVert}
    Consider a periodic Julia vertex $v\in \V(S_\R)$. Recall that $v$ is also a vertex of $\R^n(S_\R)$ for all $n\ge0$. Every level-$n$ edge $e^n$ incident to $v$ is recurrent.
\end{lem}
\begin{proof}
    The proof is immediate from Lemma \ref{lem:DegVertices}-(2).
\end{proof}

We call an edge $e\in\Edge(\R^n(S_\R))$ an {\it $FF$-edge} or a {\it $JJ$-edge} if its endpoints are both Fatou vertices or both Julia vertices, respectively. If $e$ has one Fatou vertex and one Julia vertex, then we refer to it as an {\it $FJ$-edge}. We denote by $\Edge_{FF}(\R^n(S_\R))$, $\Edge_{FJ}(\R^n(S_\R))$, and $\Edge_{JJ}(\R^n(S_\R))$ the sets of $FF$-edges, $FJ$-edges, and $JJ$-edges of $\R^n(S_\R)$, respectively.

Consider a tile $t$ with the characteristic map $\phi_t \colon \bft \to t$. 
A vertex ${\bf v}$ of $\bft$ is called a Fatou (resp.\ Julia) vertex if $\phi_t({\bf v})$ 
is a Fatou (resp.\ Julia) vertex. 
The $FF$- and $JJ$-edges of the tile type $\bft$ are defined accordingly.

\begin{rem}
    For the proofs of the main theorems of this paper, we only need to consider the case in which every edge is an $FJ$-edge, namely property (P1) in Section~\ref{Sec:GraphExt and Preorder}. However, for future reference, we discuss the general case of finite subdivision rules in Sections~\ref{sec:FSR}--\ref{sec:DualRecSklton}.
\end{rem}

\subsection{Directed graphs of subdivisions}

We use directed graphs to describe the dynamical properties of subdivision maps.

\subsubsection{Definitions and properties of directed graphs}
Let $G$ be a finite directed graph. We allow loop-edges, which are edges whose initial and terminal vertices are the same, and multiple edges, which are different edges with the same initial and terminal vertices.

A {\it (directed) path} is a sequence of edges $e_1,e_2,\dots, e_n$ where the terminal vertex of $e_i$ is equal to the initial vertex of $e_{i+1}$ for every $i\in \{1,2,\dots,n-1\}$. The {\it length} of a path is defined as the number of edges in the path. The initial vertex of $e_1$ (resp.\@ the terminal vertex of $e_n$) is the initial (resp.\@ terminal) vertex of the path. If $v$ and $w$ are the initial and terminal vertices of a path $p$, then we say that $p$ is a {\it path from $v$ to $w$}. A {\it cycle} is a path whose initial and terminal vertex coincide. We say that two cycles are {\it different} if one is not a cyclic permutation of the other.

\begin{defn}[Recurrent paths and vertices]\label{defn:RecPath}
Let $G$ be a finite directed graph. A path $p$ from $v$ to $w$ is {\it recurrent} if there exists a path from $w$ to $v$. A vertex $v\in \V(G)$ is {\it recurrent} if it is contained in a cycle.
\end{defn}

By definition, if a path from a vertex $v$ is recurrent, then $v$ is recurrent.

\subsection*{Growth rate of paths in directed graphs.} For $v\in \V(G)$ and $n\ge 0$, we denote by $P(v,n)$ the number of directed paths of length $n$ starting from~$v$.

Let $\{a_n\}_{n\ge0}$ be a sequence of non-negative numbers. We say that $\{a_n\}$ {\it has an exponential growth rate} or {\it grows exponentially fast with $n$} if there exist $C\ge D>1$ so that for any $n\ge0$, we have
\[
    D \le \log a_n \le C.
\]
We say that $\{a_n\}$ {\it has a polynomial growth rate} or {\it grows polynomially fast with $n$} if there exist $C>D\ge0$ and $d_2\ge d_1\ge0$ so that for any $n\ge0$, we have
\[
    D n^{d_1} \le a_n \le C n^{d_2}.
\]
If $d_1=d_2=d$, we call $d$ the {\it degree of the polynomial growth rate} and denote it by $\deg(a_n;n)$.

\begin{prop}\label{prop:GrowthNumPath}
 Let $G$ be a directed graph and $v \in \V(G)$.
 
 \begin{itemize}
     \item [(1)] If there exist two different cycles passing through $v$, then $P(v,n)$ grows exponentially fast with $n$.
     \item [(2)] If there exists $w\in \V(G)$ such that there are two different cycles passing through $w$ and a path from $v$ to $w$, then $P(v,n)$ grows exponentially fast with $n$.
     \item [(3)] If $v$ does not satisfy $(1)$ or $(2)$, then $P(v,n)$ grows polynomially fast with $n$. Moreover, if the maximum number of disjoint cycles that a path from $v$ can intersect is $d+1$, then $\deg(P(v,n);n)=d$. When the maximum number of cycles is zero, then $P(v,n)=0$ for every sufficiently large $n$, and we define the degree of the polynomial growth rate to be $-1$.
\end{itemize}

\end{prop}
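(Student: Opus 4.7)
The plan is to reduce to the strongly connected component (SCC) decomposition of the subgraph $G_v \subseteq G$ induced on the vertices reachable from $v$. Any cycle based at $w$ lies entirely in $w$'s SCC, so the hypotheses of $(1)$--$(3)$ become structural properties of the reachable SCCs: part $(1)$ says $v$'s own SCC admits two different simple cycles through $v$; part $(2)$ says some reachable SCC admits two different simple cycles through some $w$; the negation of both in part $(3)$ forces every nontrivial SCC reachable from $v$ to be a single simple directed cycle, so the reachable cycles are automatically vertex-disjoint.

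For part $(1)$, let $C_1, C_2$ be two different simple cycles through $v$ of lengths $\ell_1, \ell_2$ and set $\ell = \mathrm{lcm}(\ell_1, \ell_2)$. The closed walks $W_1 = C_1^{\ell/\ell_1}$ and $W_2 = C_2^{\ell/\ell_2}$ are based at $v$ and have common length $\ell$; a short periodicity argument using simplicity of $C_1, C_2$ shows $W_1 \ne W_2$ as edge sequences. Hence every binary word of length $k$ in $\{W_1, W_2\}$ is a distinct closed walk of length $k\ell$ at $v$, so $P(v, k\ell) \ge 2^k$; padding with a short suffix along $C_1$ then gives $P(v, n) \ge 2^{\lfloor n/\ell \rfloor - 1}$ for all $n$. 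Part $(2)$ is immediate from $(1)$ applied at $w$, by prepending a fixed path from $v$ to $w$ of some length $\ell_0$ to the walks counted by $P(w, n - \ell_0)$.

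For part $(3)$, I would classify each path from $v$ by its \emph{combinatorial type}: the ordered sequence of SCCs visited, the entry and exit vertex within each SCC, and the specific DAG-edges connecting successive SCCs. Within a fixed type that visits $k$ nontrivial SCCs of cycle lengths $\ell_{i_1}, \dots, \ell_{i_k}$, a path is further parameterized by wrap counts $(a_1, \dots, a_k) \in \mathbb{Z}_{\ge 0}^k$ with total length $\sum_j a_j \ell_{i_j} + c$, for a constant $c$ depending only on the type. A standard lattice-point count in a simplex gives $\Theta(n^{k-1})$ admissible tuples with prescribed total $n - c$, and summing over the finitely many combinatorial types yields $P(v, n) = \Theta(n^{k_{\max} - 1}) = \Theta(n^d)$, where $k_{\max} = d + 1$ by hypothesis. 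When $k_{\max} = 0$, $G_v$ is a finite DAG, every path from $v$ has length at most $|\mathrm{Vert}(G_v)|$, and $P(v, n) = 0$ for $n$ large, consistent with the convention $d = -1$.

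The main obstacle is the bookkeeping in part $(3)$: one must verify that distinct combinatorial types produce disjoint families of paths (so the $O(n^d)$ upper bound is not inflated) and exhibit a single type realizing $k = d + 1$ (so the $\Omega(n^d)$ lower bound is attained). The vertex-disjointness of nontrivial SCCs, forced precisely by the failure of $(1)$ and $(2)$, is what makes the SCC-visit sequence of a path well-defined and underlies both verifications.
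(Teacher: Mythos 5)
The paper does not actually prove Proposition \ref{prop:GrowthNumPath}; it simply cites \cite[Theorem 3.6]{Park_Levy}, so there is no in-text argument to compare against. Your self-contained proof via the strongly-connected-component (SCC) decomposition of the set of vertices reachable from $v$ is the right idea and, with details filled in, is correct. Two things are worth making explicit. First, you implicitly read ``cycle'' as ``simple cycle''; this is necessary, since under the paper's literal definition (cycle = closed walk, ``different'' = not a cyclic permutation) a single simple cycle $C$ through $v$ already yields two different cycles $C$ and $C\cdot C$, which would make (1) apply to every vertex lying on any cycle and render (3) vacuous. Your interpretation is the intended one, and the reduction ``no two distinct simple cycles through any reachable $w$ $\Longleftrightarrow$ every reachable nontrivial SCC is a single simple directed cycle'' is exactly what makes the rest of the argument go.

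A few places deserve a second look when you flesh this out. In (1), the ``short periodicity argument'' is a Fine--Wilf application: if $W_1=W_2$ as words of length $\mathrm{lcm}(\ell_1,\ell_2)\geq \ell_1+\ell_2-\gcd(\ell_1,\ell_2)$ with periods $\ell_1$ and $\ell_2$, they have period $g=\gcd(\ell_1,\ell_2)$, so $v$ recurs at position $g$ in $C_1$, contradicting simplicity unless $g=\ell_1=\ell_2$, whence $C_1=C_2$. In (3), the lattice-point count $\#\{(a_1,\dots,a_k)\in\mathbb{Z}_{\ge0}^k : \sum a_j\ell_{i_j}=n-c\}$ is $\Theta(n^{k-1})$ only along the residue class $n\equiv c\pmod{\gcd_j \ell_{i_j}}$ and is $0$ otherwise, so the conclusion should be phrased as $P(v,n)=O(n^d)$ with $P(v,n)\geq c\,n^d$ on an arithmetic progression of $n$'s, which is what ``degree-$d$ polynomial growth'' means here. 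Finally, your two verifications for (3) are both clean in this setup: the SCC-visit sequence, entry/exit vertices, and transition edges are read off from the path, so the type map is injective and the type-families partition the paths; and any path realizing the maximum of $d+1$ disjoint cycles has a combinatorial type with $k=d+1$ nontrivial SCCs (each intersected cycle lies in its own SCC since reachable nontrivial SCCs are exactly the simple cycles, hence pairwise disjoint), which gives the matching lower bound.
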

\begin{proof}
See \cite[Theorem 3.6]{Park_Levy}.
\end{proof}


\begin{lem}[Uniqueness of recurrent paths]\label{lem:UniqRecurPath}
Let $G$ be a finite directed graph and $v\in \V(G)$. Suppose $P(v,n)$ grows polynomially fast with $n$ and $v$ is a recurrent vertex. Then, for every $n\ge 0$, there exists a unique recurrent path of length $n$ that starts from $v$.
\end{lem}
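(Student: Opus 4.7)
The plan is to exploit polynomial growth of $P(v,n)$ to force the induced subgraph on the strongly connected component of $v$ to be a single simple directed cycle; once this rigidity is established, both existence and uniqueness of recurrent paths from $v$ follow immediately.

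First I would observe that every vertex on a recurrent path from $v$ lies in the strongly connected component $S$ of $v$. Indeed, if $p=e_1\cdots e_n$ is a recurrent path from $v$ to $w$ with a return path $q$ from $w$ to $v$, then for each intermediate vertex $x$ the initial segment of $p$ provides a directed path $v\to x$ and the remainder of $p$ followed by $q$ provides $x\to v$. Since $v$ is recurrent it lies on a simple cycle $C$, and every vertex of $C$ is in $S$.

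Next I would show that the induced subgraph on $S$ coincides with $C$. Suppose $S$ contained a vertex $y\notin V(C)$: then $y\in S$ gives paths $v\to y$ and $y\to v$, producing a cycle through $v$ that visits $y$ and hence cannot be a cyclic permutation of $C$. Suppose instead $S$ contains an edge $e$ between vertices $a,b$ of $V(C)$ with $e\notin E(C)$ (a chord, or a loop if $a=b$): then concatenating $e$ with the $b$-to-$a$ arc of $C$ gives a cycle through $a$ whose edge set differs from $C$'s, and translating along $C$ yields a cycle through $v$ that is not a cyclic permutation of $C$. In each case we obtain two different cycles through $v$, which by Proposition~\ref{prop:GrowthNumPath}(1) forces $P(v,n)$ to grow exponentially, contradicting the hypothesis. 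Hence the induced subgraph on $S$ is exactly $C$.

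Both halves of the lemma then drop out. For existence, the first $n$ edges of the infinite walk $C\cdot C\cdot C\cdots$ starting at $v$ form a path of length $n$ whose endpoint lies on $C$ and thus admits a return path to $v$ along $C$, so it is recurrent. For uniqueness, any recurrent path of length $n$ from $v$ stays inside the induced subgraph on $S$, which equals $C$; since $C$ is a simple directed cycle through $v$, there is exactly one path of length $n$ starting at $v$ inside $C$. The main subtlety is to ensure that the extra cycle constructed in the rigidity step is genuinely different from $C$ in the sense of Definition~\ref{defn:RecPath} rather than a mere cyclic rotation; comparing either edge sets or vertex sets of the two cycles sidesteps this concern.
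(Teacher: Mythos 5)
Your proof is correct and follows essentially the same route as the paper's: invoke Proposition~\ref{prop:GrowthNumPath}(1) to conclude there is only one cycle through $v$, then observe that a recurrent path of length $n$ from $v$ must be supported in that cycle, whence it is unique. The paper compresses this into two sentences; the genuine content you add is the careful justification of the step the paper leaves implicit, namely that a recurrent path from $v$ is necessarily confined to the cycle. You establish this by showing every vertex on a recurrent path lies in the strongly connected component $S$ of $v$ and then that the induced subgraph on $S$ is exactly the simple cycle $C$ (no extraneous vertices, no chords, no parallel edges or loops), using that any defect would produce a second cycle through $v$ distinguishable from $C$ by its vertex or edge set. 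This is exactly the right way to make the paper's terse argument airtight, and your care about distinguishing cycles up to cyclic permutation --- the notion the paper actually uses --- is well placed.
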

\begin{proof}
By Proposition \ref{prop:GrowthNumPath}, there is only one cycle passing through $v$. Hence, there is only one path that starts from $v$ and supported in the cycle.
\end{proof}

\subsubsection{Directed graphs from subdivisions} There are several directed graphs encoding the dynamics of finite subdivision rules $\R$.

\noindent A {\it directed graph $\Ecal_\R$ of edge subdivisions} is defined in such a way that
\begin{itemize}[label=-]
    \item $\V(\Ecal_\R)$ is the set of all level-$0$ edges (we use bracket $[e]$ when we need to distinguish vertices of $\Ecal_\R$ from actual edges of $S_\R$), and 
    \item every edge of $\Ecal_\R$ from $[e]$ to $[e']$ corresponds to a level-$1$ subedge $e^1$ of $e$ with $f(e^1)=e'$.
\end{itemize}

\noindent Similarly, A {\it directed graph $\Tcal_\R$ of tile subdivisions} is defined in such a way that
\begin{itemize}[label=-]
    \item $\V(\Tcal_\R)$ is the set of all level-$0$ tiles (we use bracket $[t]$ when we need to distinguish vertices of $\Tcal_\R$ from actual tiles of $S_\R$), and 
    \item every edge of $\Tcal_\R$ from $[t]$ to $[t']$ corresponds to a level-$1$ subtile $t^1$ of $t$ with $f(t^1)=t'$.
\end{itemize}

\noindent A {\it directed graph $\Bcal_\R$ of bands} is defined in such a way that
\begin{itemize}[label=-]
    \item $\V(\Bcal_\R)$ is the set of all level-$0$ bands (we use bracket $[b]$ when we need to distinguish vertices of $\Bcal_\R$ from actual bands of $S_\R$), and 
    \item every edge of $\Bcal_\R$ from $[b]$ to $[b']$ corresponds to a level-$1$ subband $b^1$ of $b$ with $f(b^1)=b'$.
\end{itemize}

Consider a level-$0$ edge $e$. Every level-$n$ subedge $e^n$ of $e$ is bijectively corresponded to a path of length $n$ in $\Ecal_\R$ starting from $[e]$. The terminal vertex of the path is the level-$0$ edge that is the type of $e^n$. There are similar 1-1 correspondences for tiles and bands as well.

\begin{defn}[Recurrent edges and bands]
For $n>0$, a level-$n$ subedge $e^n$ of a level-$0$ edge $e^0$ (resp.\@ level-$n$ subband $b^n$ of a level-$0$ band $b^0$) is a {\it recurrent subedge} of $e^0$ (resp.\@ a {\it recurrent subband} of $b^0$) if the corresponding path in $\Ecal_\R$ (resp.\@ in $\Bcal_\R$) is recurrent. For notational simplicity, we say that a level-$n$ edge (resp.\@ band) is {\it recurrent} if it is a recurrent subedge of some level-$0$ edge (resp.\@ band). A level-$0$ edge $e^0$ (resp.\@ band $b^0$) is {\it recurrent} if there is a cycle in $\Ecal_\R$ (resp.\@ $\Bcal_\R$) that passes through $[e^0]$ (resp.\@ $[b^0]$). 
\end{defn}

It follows from the definition that for $n>m\ge0$, if a level-$n$ subedge $e^n$ (resp.\@ subband $b^n$) of a level-$m$ edge $e^m$ (resp.\@ band $b^m$) is recurrent, then so is $e^m$ (resp.\@ $b^m$). Moreover, if a level-$n$ band $b^n$ is recurrent, then its sides edges are also recurrent.

\subsection{Growth rate of edge subdivisions}
\begin{defn}[Growth rate of edge subdivisions]\label{defn:GRateEdgeSubdiv}
For a level-$0$ edge $e$ of a finite subdivision rule $\R$, we define 
\begin{equation}\label{eqn:GrowthEdgeSubdiv}
    \rho(e) := \lim\limits_{n\rightarrow \infty} \left( |\R^n(e)| \right) ^{\frac{1}{n}}.
\end{equation}
We say that the edge $e$ {\it has a sub-exponential (resp.\@ exponential) growth rate of subdivisions} if $\rho(e)=1$ (resp.\@ $\rho(e)>1$). A finite subdivision rule $\R$ has {\it a sub-exponential growth rate of edge subdivisions} if every edge type has sub-exponential growth rate of subdivisions. By Proposition \ref{prop:subexpedgesubdiv}, we can substitute the word ``sub-exponential'' for ``{\it polynomial}\,''.
\end{defn}

\begin{prop}\label{prop:subexpedgesubdiv}
A finite subdivision rule $\Rcal$ has sub-exponential growth rate of edge subdivisions if and only if the cycles in $\Ecal_\R$ are disjoint. In this case, for each level-$0$ edge $e$, $\# \left\{ \textup{level-}n~\textup{subedges~of}~e \right\}$ grows polynomially fast with $n$.
\end{prop}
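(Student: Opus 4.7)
\medskip

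\noindent\textbf{Proof proposal.} The plan is to translate the statement entirely into the language of path-counts in the directed graph $\Ecal_\R$ and then apply Proposition~\ref{prop:GrowthNumPath} directly. Recall that we have already observed a bijection between level-$n$ subedges of a level-$0$ edge $e$ and directed paths of length $n$ in $\Ecal_\R$ that start at the vertex $[e]$. Under this bijection we have the identity
\[
  |\R^n(e)| \;=\; P([e], n),
\]
so the exponential growth rate $\rho(e)$ of edge subdivisions is exactly the exponential growth rate of $P([e], n)$, and $\rho(e) = 1$ precisely when $P([e], n)$ grows subexponentially.

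For the ``$\Leftarrow$'' direction, assume the cycles of $\Ecal_\R$ are pairwise disjoint (no vertex lies on two distinct cycles). Fix a level-$0$ edge $e$. Then no vertex reachable from $[e]$ can lie on two different cycles, and no vertex reachable from $[e]$ can admit a path to such a ``bad'' vertex, simply because no such vertex exists in $\Ecal_\R$ at all. Hence $[e]$ satisfies neither condition (1) nor condition (2) of Proposition~\ref{prop:GrowthNumPath}, so by condition (3) the count $P([e], n) = |\R^n(e)|$ grows polynomially; in particular $\rho(e) = 1$. Since $e$ was arbitrary, $\R$ has sub-exponential (in fact polynomial) growth of edge subdivisions, which also establishes the second sentence of the proposition.

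For the ``$\Rightarrow$'' direction, I would prove the contrapositive. Suppose two distinct cycles $C_1 \ne C_2$ of $\Ecal_\R$ share a common vertex $[e']$. Then $[e']$ satisfies condition (1) of Proposition~\ref{prop:GrowthNumPath}, so $P([e'], n)$ grows exponentially, giving $\rho(e') > 1$ and violating the sub-exponential hypothesis on $\R$.

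The argument is essentially a direct application of Proposition~\ref{prop:GrowthNumPath}; there is no serious obstacle. The only minor care point is to clearly fix the interpretation of ``cycles in $\Ecal$ are disjoint'' as ``any two distinct cycles are vertex-disjoint'', which is the condition that matches the dichotomy between conditions (1)--(2) and condition (3) of Proposition~\ref{prop:GrowthNumPath}; I would state this convention explicitly at the start of the proof to avoid ambiguity.
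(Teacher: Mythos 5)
Your argument is exactly the paper's: Proposition~\ref{prop:subexpedgesubdiv} is proved there in one line by appealing to Proposition~\ref{prop:GrowthNumPath} together with the bijection between level-$n$ subedges of $e$ and length-$n$ paths in $\Ecal_\R$ starting at $[e]$, which is precisely the reduction you carry out. Your write-up simply fills in the details of that reduction, so it is correct and takes essentially the same approach.
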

\begin{proof}
The proposition is straightforward from Proposition \ref{prop:GrowthNumPath} and the correspondence between paths of length $n$ in $\Ecal_\R$ and level-$n$ subedges of level-$0$ edges.
\end{proof}



Consider the restriction $f|_{S_\R^{(1)}}\from S_\R^{(1)} \righttoleftarrow$ of $f\from \R(S_\R)\to S_\R$ to the $1$-skeleton $S_\R^{(1)}$ of the level-$0$ complex. Then, the map $f|_{S_\R^{(1)}}$ is a Markov map, in a sense that each level-$1$ subedge of an edge in $S_\R$ is homeomorphically mapped onto an edge in $S_\R$. The adjacency matrix of the directed graph of edge subdivision $\Ecal_\R$ is equivalent to the incidence matrix of the Markov map $f|_{S_\R^{(1)}}$.

\begin{prop}\label{prop:entpysubexpedgesubdiv}
A finite subdivision rule $\R$ has a polynomial growth rate of edge subdivisions if and only if $h_{top}(f|_{S_\R^{(1)}})=0$, where $h_{top}$ means topological entropy.
\end{prop}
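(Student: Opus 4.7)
The plan is to identify both sides of the equivalence with a common spectral condition on the adjacency matrix $A$ of $\Ecal_\R$. First I would observe that by the very definition of $\Ecal_\R$, the $([e],[e'])$-entry of $A$ counts the number of level-$1$ subedges of $e$ that $f$ maps onto $e'$; equivalently, it counts the number of times the Markov map $f^{(1)}$ covers $e'$ when restricted to $e$. Hence $A$ is precisely the incidence (transition) matrix of the Markov map $f^{(1)}\colon(S_\R)^{(1)}\righttoleftarrow$.

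Next I would invoke the classical formula for the topological entropy of a Markov map of a graph (Milnor--Thurston): $h_{top}(f^{(1)})=\log\max\{\lambda(A),1\}$, where $\lambda(A)$ is the spectral radius of $A$. In particular $h_{top}(f^{(1)})=0$ if and only if $\lambda(A)\le 1$. This is the one external ingredient and the main place I would rely on a cited result rather than a from-scratch argument; everything else is combinatorial bookkeeping with $\Ecal_\R$.

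On the edge-subdivision side, by Proposition \ref{prop:subexpedgesubdiv} polynomial growth of $|\R^n(e)|$ is equivalent to the cycles of $\Ecal_\R$ being disjoint (i.e., every strongly connected component of $\Ecal_\R$ is a single cycle or an isolated vertex). A standard Perron--Frobenius argument for non-negative integer matrices gives that this disjointness is equivalent to $\lambda(A)\le 1$: if two distinct cycles share a vertex, concatenation yields exponentially many closed walks of length $n$, forcing $\lambda(A)>1$; conversely, if each strongly connected component is a single cycle, then $A$ is block upper-triangular with diagonal blocks that are cyclic permutation matrices (or zero), each of spectral radius $\le 1$.

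Putting these three steps together yields the equivalence $h_{top}(f^{(1)})=0\iff \lambda(A)\le 1\iff \R$ has polynomial growth rate of edge subdivisions. The only nontrivial step is the Markov-map entropy formula, which is a well-known theorem; the rest is direct translation between the digraph $\Ecal_\R$, its adjacency matrix $A$, and the Markov structure of $f^{(1)}$.
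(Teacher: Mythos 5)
Your proof is correct and takes essentially the route the paper intends: the paper itself states, immediately before the proposition, that the adjacency matrix of $\Ecal_\R$ is the incidence matrix of the Markov map $f^{(1)}$, and then simply cites \cite[Proposition 9.3]{Park_Levy} for the remainder. Your three-step chain (identify $A$, apply the Markov-map entropy formula $h_{top}=\log\max\{\lambda(A),1\}$, then relate $\lambda(A)\le 1$ to disjointness of cycles via Proposition \ref{prop:subexpedgesubdiv}) is the standard argument and is sound; note that since every row of $A$ has row sum $\ge 1$ (each level-$0$ edge has at least one level-$1$ subedge) one always has $\lambda(A)\ge 1$, so the $\max$ is superfluous here. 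The one small attribution quibble: the formula $h_{top}=\log\max\{\lambda(A),1\}$ for piecewise-monotone Markov maps of graphs is usually credited to Misiurewicz--Szlenk (lap numbers) and its extensions by Alsed\`{a}--Llibre--Misiurewicz to graph maps, rather than to Milnor--Thurston, whose kneading theory is specific to interval maps; the mathematical content you invoke is nonetheless the right one.
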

\begin{proof}
	See {\cite[Proposition 9.3]{Park_Levy}}.
\end{proof}

\subsection{Shifts and powers of finite subdivision rules}

Let $\R$ be a finite subdivision rule with the subdivision map $f\from \R(S_\R)\to S_\R$.

\begin{defn}[Shifts]\label{defn:shift}
For $k\ge 1$, the {\it $k^{th}$-shift} of $\R$, denoted by $\R_{[k]}$, is a finite subdivision rule whose level-$0$ and level-$1$ complexes are equal to $\R^k(S_\R)$ and $\R^{k+1}(S_\R)$, respectively, and the subdivision map $f_{[k]}$ is equal to $f\from \R^{k+1}(S_\R) \to \R^k(S_\R)$.
\end{defn}

\begin{defn}[Powers]\label{defn:power}
For $k\ge 1$, the {\it $k^{th}$-power} of $\R$, denoted by $\R^k$, is a finite subdivision rule whose level-$0$ and level-$1$ complexes are equal to $S_\R$ and $\R^k(S_\R)$, respectively, and the subdivision map is $f^k\from \R^k(S_\R) \to S_\R$.
\end{defn}

We note that if $\R$ has a polynomial growth rate of edge subdivisions, then its shifts $\R_{[k]}$ and powers $\R^k$ also have polynomial growth rates of edge subdivisions.

\section{Separated recurrence}\label{sec:DualRecSklton}

In this section, we introduce the notion of separated recurrence for finite subdivision rules (Definition~\ref{defn:SepRecur}) and show that the finite subdivision rules associated with rational maps satisfy separated recurrence (Theorem~\ref{thm:SepRecurr}).

\subsection*{Basic terminologies of dual graphs}
Consider the level-$n$ subdivision complex $\R^n(S_\R)$ for a finite subdivision rule $\R$ and $n\ge0$.

We define the {\it level-$n$ dual $1$-skeleton} $G_n$ as a graph in $S^2$ as follows. Each level-$n$ tile $t^n$ of $\R^n(S_\R)$ contains exactly one dual vertex $V(t^n)\in \V(G_n)$ in its interior. Each level-$n$ edge $e^n\in \Edge(\R^n(S_\R))$ has a dual edge $E(e^n)\in \Edge(G_n)$ so that (a) $E(e^n)$ joins the two vertices $V(t^n_1)$ and $V(t^n_2)$ where $t^n_1$ and $t^n_2$ are level-$n$ tiles incident to $e^n$ and (b) $E(e^n)$ intersects $\R^n(S_\R)$ transversely at only one point, which lies in the interiors of $e^n$ and $E(e^n)$. If $t^n_1=t^n_2$ is doubly incident to $e^n$, then $E(e^n)$ is a loop-edge. We call $V(t^n)$ and $E(e^n)$ the {\it dual vertex} and the {\it dual edge} of $t^n$ and $e^n$, respectively.


\begin{defn}[Links of vertices]\label{defn:Links}
	Fix $n\ge0$ and a level-$n$ vertex $v^n\in \V(\R^n(S_\R))$. The {\it link} $L^n(v^n)$ of $v^n$ is a connected graph consisting of the dual vertices $V(t^n)$ and edges $E(e^n)$ of all level-$n$ tiles $t^n$ and edges $e^n$ incident to $v$, respectively. We consider $L^n(v^n)$ as a subgraph of the level-$n$ dual skeleton $G_n$ of $\R^n(S_\R)$. 
\end{defn}

Consider a periodic Julia vertex $v$, which is a level-$0$ vertex. Since $v$ is also a level-$n$ vertex for every $n\ge 0$, the link $L^n(v)$ is well-defined and we call it the {\it level-$n$ link of $v$}. In this case, by Lemma \ref{lem:Edge_inc_PerVert}, $L^n(v)$ consists of duals of level-$n$ recurrent edges.

Fix a vertex $v\in \V(\R^n(S_\R))$. The link $L^n(v)$ is homeomorphic to a circle if and only if its every vertex has degree $2$. This is also equivalent to that every level-$n$ tile $t^n$ incident to $v$ contains exactly two edges incident to $v$. Then, either (i) $t^n$ is singly incident to $v$ or (ii) $t^n$ is a bigon doubly incident to $v$ such that both edges of $t^n$ incident to $v$ are loop-edges. From these observations, it is easy to show the following lemma. The proof is left to the reader.

\begin{lem}\label{lem:Link Circle}
    Consider $v \in \V(\R^n(S_\R))$, If every level-$n$ tile incident to $v$ is singly incident to $v$. Then, the link $L^n(v)$ is homeomorphic to a circle.
\end{lem}

The dual edge $E(e^n)$ of a level-$n$ edge $e^n$ is decomposed into two subedges by the intersection point $E^n\cap e^n$. We call them {\it dual half-edges} of $e^n$.

\begin{defn}[Separated recurrence]\label{defn:SepRecur}
    Suppose that $\R$ is a finite subdivision rule. We say that $\R$ {\it has separated recurrence at level $n$} if every non-empty intersection of a level-$n$ tile with the level-$n$ recurrent skeleton $N_n$ is either
    \begin{enumerate}
        \item the union of two adjacent dual half-edges in the link $L^n(v)$ of a periodic Julia vertex $v$, or
        \item one dual half-edge of a level-$n$ recurrent edge that is not incident to any periodic Julia vertices.
    \end{enumerate}
    We say that $\R$ has {\it separated recurrence} if it does at every level $n\ge0$.
\end{defn}

The highlight of Section \ref{sec:DualRecSklton} is the following theorem.

\begin{thm}\label{thm:SepRecurr}
	Let $\R$ be a finite subdivision rule associated with a \pcf rational map. Then, $\R$ has separated recurrence at level-$n$ for all sufficiently large $n$. Hence, for any sufficiently large $n$, the $n^{th}$-shift $\R_{[n]}$ has separated recurrence (at every level). 
\end{thm}

We will prove Theorem \ref{thm:SepRecurr} at the end of Section \ref{subsec:Dual Rec Skel of FSR of PCF}.

\begin{rem}
    The only property of \pcf rational maps used in Theorem~\ref{thm:SepRecurr} is the expansion property of the canonical conformal metric. Hence, Theorem~\ref{thm:SepRecurr} extends to finite subdivision rules associated with B\"{o}ttcher expanding maps, in the sense of~\cite{BartholdiDudko_expanding}. 
\end{rem}

\subsection{Adjacency of dual edges and corner bands}
When edges subdivide polynomially fast, recurrent subedges and recurrent subbands are unique at each level, as stated in the following lemma.

\begin{lem}\label{lem:UniqRecurBand}
	Let $\R$ be a finite subdivision rule.
	\begin{enumerate}
		\item Suppose $e$ is a level-$0$ recurrent edge that subdivides polynomially fast. Then, for each $n\ge 0$, there is a unique level-$n$ subedge of $e$ that is recurrent.
		\item Suppose $b:=\band$ is a level-$0$ recurrent band such that $e_1$ and $e_2$ are level-$0$ recurrent edges that subdivide polynomially fast. Then, for any $n\ge 0$, there exists a unique level-$n$ subband of $b$ that is recurrent.
        \item If $h_{top}(f|_{S_\R})>0$, then there exists a level-$0$ recurrent edge $e$ that has more than one level-$n$ recurrent subedges at any sufficiently large $n>0$.
	\end{enumerate}
\end{lem}
\begin{proof}
	Statements (1) and (2) are straightforward from Lemma \ref{lem:UniqRecurPath}. If $h_{top}(f|_{S_\R)})>0$, then, by Proposition \ref{prop:entpysubexpedgesubdiv}, there exists a level-$0$ edge $e$ such that the vertex $[e]$ of the directed graph $\Ecal_\R$ of edge subdivisions is contained in two different cycles. Then, for each $n>0$, there exist at least two distinct level-$n$ recurrent subedges of $e$. 
\end{proof}

For a recurrent level-$0$ edge $e$ that subdivides polynomially fast, the corresponding vertex $[e]\in \V(\Ecal_\R)$ is contained in a unique directed cycle in $\Ecal_\R$. We call the length of the cycle the {\it period of recurrence} of the edge $e$. We define the periods of recurrence for recurrent bands in a similar way.

\begin{lem}[Adjacency of dual edges]\label{lem:bandwithperiodicsides}
	Suppose $\R$ is a finite subdivision rule. Then, the following properties hold.
	\begin{itemize}
		\item [(1)] Every level-$n$ band bijectively corresponds to the adjacency of the dual edges of the level-$n$ subedges of level-$0$ edges. This includes the case when the dual edge is a loop-edge, which is adjacent to itself.
		\item [(2)] Under the bijection in (1), the level-$n$ recurrent bands (possibly not-surjectively) correspond to the adjacencies of the dual edges of recurrent level-$n$ subedges of level-$0$ edges.
		\item [(3)] If $\mathcal{R}$ has polynomial growth in the number of edge subdivisions, then the correspondence in (2) is surjective (and hence bijective) for all sufficiently large $n$.
	\end{itemize}
		
\end{lem}

\begin{proof}
	(1) is immediate and (2) follows from a fact that side edges of a level-$n$ recurrent band are level-$n$ recurrent subedges.
	
	Let us show (3). Suppose that $e_1$ and $e_2$ are level-$0$ recurrent edges. Since $e_1$ and $e_2$ subdivide polynomially fast, for each $i\in \{1,2\}$ and $n>0$, there exists a unique level-$n$ recurrent subedge $e^n_i$ of $e_i$. Let $p$ be the least common multiple of the periods of recurrence of $e_1$ and $e_2$. To show (3), it suffices to show the following: Suppose that there exists an integer $N\ge 2p$ such that the dual edges $E(e^N_1)$ and $E(e^N_2)$ of $e^N_1$ and $e^N_2$ are adjacent. Then, the level-$N$ band $b^N=(t^N;(e^N_1,s^N_1),(e^N_2,s^N_1))$, which exists by (1), is recurrent.
    
    To show that $b^N$ is recurrent, it suffices to show that $b^0$ is recurrent, where $b^0 = (t; (e_1,s_1), (e_2,s_2))$ is the level-$0$ band containing $b^N$ as a subband. Indeed, suppose that $b^0$ is recurrent. Then its side edges $e_1$ and $e_2$ are recurrent. Since $b^0$ is recurrent, it has a unique level-$N$ recurrent subband $b'^N$ (Lemma \ref{lem:UniqRecurBand}). The side edges of $b'^N$ are recurrent subedges of $e_1$ and $e_2$, and hence they must be $e_1^N$ and $e_2^N$. It follows that $b^N = b'^N$, and therefore $b^N$ is recurrent.

    For the rest of the proof, we show $b^0$ is recurrent. Recall $N\ge 2p$. For each integer $m$ with $0 \le m\le 2p$, we define $b^m=\bandm$ as the level-$m$ band that contains $b^N$ as a subband. By the definition of $p$, for any $i,j\in\{1,2\}$, the edge $e^{jp}_i$ is of type $e_i$. Below, we address the case $e_1 \neq e_2$; the case $e_1 = e_2$ is similar.    
    
    \vspace{5pt}
    \noindent{\it Case 1: $b^0$ is the only level-$0$ band having $e_1$ and $e_2$ as side edges.}

    In this case, $b^p$ is of type $b^0$ and $b^0$ is recurrent.

    \noindent{\it Case 2: There exists a level-$0$ band $b'^0=(t';(e_1,s_1'),(e_2,s_2'))$ such that $t \neq t'$.}

    Since $b^0$ and $b'^0$ are the only level-$0$ bands having $e_1$ and $e_2$ as their side edges, each of $b^p$ and $b^{2p}$ is of type either $b_0$ or $b_0'$.

    If $b^p$ or $b^{2p}$ is of type $b^0$, then $b^0$ is recurrent.

    \begin{figure}
    \centering
        \def\svgwidth{0.7\textwidth}
        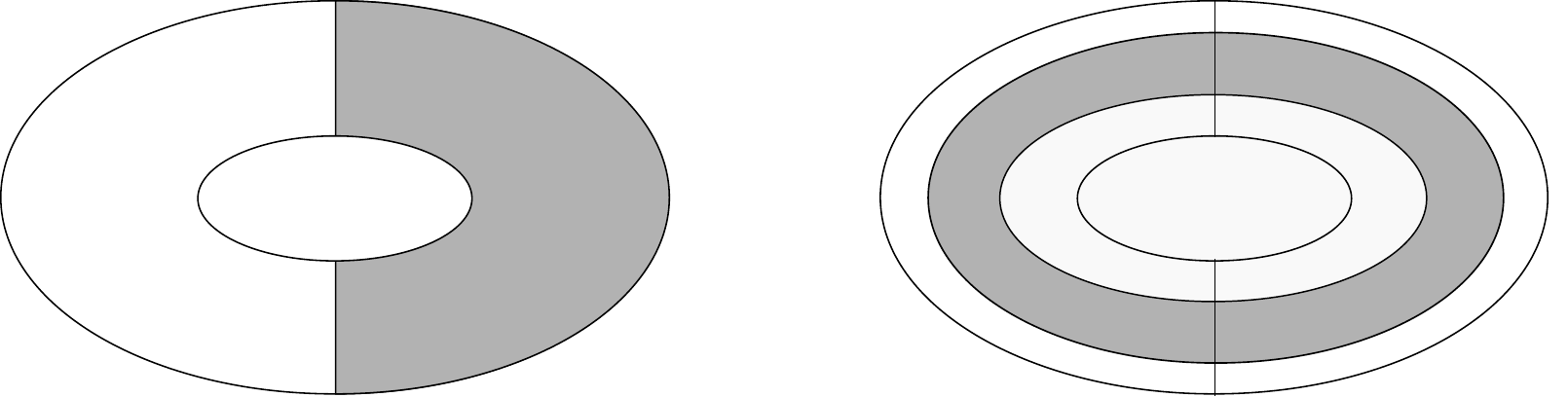
    \caption{Violation of orientation preservation of $f^p$}
    \label{fig:BandsShareSides}
    \end{figure}

    If both $b^{p}$ and $b^{2 p}$ are of type $b'^0$, then $b'^0$ is a recurrent band whose period of recurrence is $p$. Let $b'^p=(t'^p;(e^p_1,s_1'^p),(e^p_2,s_2'^p))$ be the level-$p$ recurrent subband of $b'$. Then, $b^{p}$ and $b'^{p}$ share the side edges $e^p_1$ and $e^p_2$, which are the unique level-$p$ recurrent subedges of $e_1$ and $e_2$. Since both $b^{p}$ and $b'^{p}$ are of type $b'^0$, i.e., $f^p(b^p)=f^p(b'^p)=b'^0$, the $p^{th}$-iteration $f^{p}$ cannot be orientation-preserving on $t^{p} \cup t'^{p}$, which is a contradiction. See Figure \ref{fig:BandsShareSides}. 
\end{proof}

\begin{defn}[Corner band]\label{defn:CornerBand}
	A level-$n$ band $b^n=\bandn$ is a {\it corner band} if the two side edges $e^n_1$ and $e^n_2$ are the images of the two consecutive edges of the polygon $\bft$ under the characteristic map $\phi_{t^n}\from \bft \to t^n$. Thus, $e^n_1$ and $e^n_2$ have a common endpoint $v^n$, which is called the {\it corner vertex} of the corner band $b^n$. For $m>n$, a level-$m$ subband $b^m=\bandm$ of a corner band $b^n$ is a {\it corner subband} if $b^m$ is also a corner band. See Figure \ref{fig:CornerBand}. 
\end{defn}

In Definition \ref{defn:CornerBand}, if $e^n_1=e^n_2$, then the corner vertex $v$ is incident to only one level-$n$ edge, which equals $e^n_1(=e^n_2)$. 

Recall the definition of corners at vertices in Section \ref{sec:FSR}. For a fixed level $n$ and a level-$n$ vertex $v^n$, there is a natural bijection between corners at $v^n$ and level-$n$ corner bands with the corner vertex $v^n$.
\begin{figure}
	\centering
    \def\svgwidth{0.3\textwidth}
\begingroup%
  \makeatletter%
  \providecommand\color[2][]{%
    \errmessage{(Inkscape) Color is used for the text in Inkscape, but the package 'color.sty' is not loaded}%
    \renewcommand\color[2][]{}%
  }%
  \providecommand\transparent[1]{%
    \errmessage{(Inkscape) Transparency is used (non-zero) for the text in Inkscape, but the package 'transparent.sty' is not loaded}%
    \renewcommand\transparent[1]{}%
  }%
  \providecommand\rotatebox[2]{#2}%
  \newcommand*\fsize{\dimexpr\f@size pt\relax}%
  \newcommand*\lineheight[1]{\fontsize{\fsize}{#1\fsize}\selectfont}%
  \ifx\svgwidth\undefined%
    \setlength{\unitlength}{219.18866731bp}%
    \ifx\svgscale\undefined%
      \relax%
    \else%
      \setlength{\unitlength}{\unitlength * \real{\svgscale}}%
    \fi%
  \else%
    \setlength{\unitlength}{\svgwidth}%
  \fi%
  \global\let\svgwidth\undefined%
  \global\let\svgscale\undefined%
  \makeatother%
  \begin{picture}(1,0.8306715)%
    \lineheight{1}%
    \setlength\tabcolsep{0pt}%
    \put(0,0){\includegraphics[width=\unitlength,page=1]{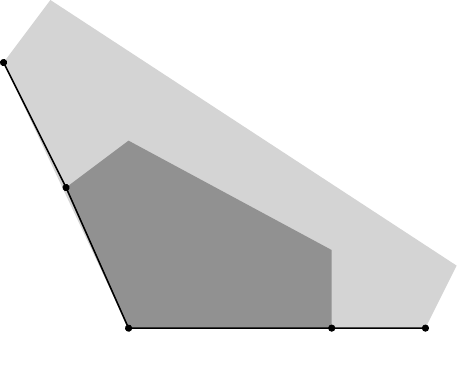}}%
    \put(0.21666695,0.01638208){\color[rgb]{0,0,0}\makebox(0,0)[lt]{\lineheight{1.25}\smash{\begin{tabular}[t]{l}$v$\end{tabular}}}}%
    \put(0.3332408,0.21236395){\color[rgb]{0,0,0}\makebox(0,0)[lt]{\lineheight{1.25}\smash{\begin{tabular}[t]{l}$t^m$\end{tabular}}}}%
    \put(0.14260266,0.59852835){\color[rgb]{0,0,0}\makebox(0,0)[lt]{\lineheight{1.25}\smash{\begin{tabular}[t]{l}$t^n$\end{tabular}}}}%
  \end{picture}%
\endgroup%

	\caption{Corner band/subband}
	\label{fig:CornerBand}
\end{figure}   
\begin{prop}\label{prop:CornerBand}
	The following properties about corner bands are satisfied.
	\begin{itemize}
		\item [(1)] For all $m>n$ and every level-$n$ corner band $b^n$, there exists at most one level-$m$ corner subband $b^m$ of $b^n$. If such a $b^m$ exists, then $b^n$ and $b^m$ share the same corner vertex.
		\item [(2)] If a level-$n$ corner band $b^n$ is recurrent, then its corner vertex is a periodic point of $f$.
		\item [(3)] Let $v\in \V(S_\R)$ be a Fatou vertex. Then, there exists at least one level-$0$ corner band $b^0$ having $v$ as the corner vertex such that $b^0$ has no level-$n$ corner subbands for any sufficiently large $n>0$.
		\item [(4)] Let $v\in \V(S_\R)$ be a periodic Julia vertex. Then, for every $n>0$, every level-$0$ corner band $b^0$ with the corner vertex $v$ has a level-$n$ corner subband $b^n$, which is unique by (1). Moreover, $b^n$ is a recurrent subband of $b^0$.
	\end{itemize}
\end{prop}
\begin{proof}
	(1) and (2) are immediate from definitions; (3) Since $v$ is a Fatou vertex, its forward image contains a periodic critical point. Note that the number of level-$n$ corner bands whose corner vertex is $v$ is equal to $\deg_{\R^n(S_\R)}(v)$. By Equation \eqref{eqn:Deg Vertex SubdivCplx}, the number $\deg_{\R^n(S_\R)}(v)$ grows exponentially fast with $n$. Hence we obtain (3); (4) It follows from Equation \eqref{eqn:Deg Vertex SubdivCplx}, we have $\deg(f^n(v))=\deg(v)$ for every $n\ge0$. Then every level-$0$ corner band with the corner vertex $v$ has a corner subband at every level. The uniqueness follows from (1), and the recurrence follows from the fact that $v$ is periodic.
\end{proof}

\subsection{Finite subdivision rules associated with \pcf rational maps}\label{subsec:Dual Rec Skel of FSR of PCF}

In this subsection, we investigate properties of finite subdivision rules that are associated with \pcf rational maps; recall Definition \ref{defn:FSR Asso to PCF}. 

\begin{lem}\label{lem:NoFatou_CornerV}
    For a finite subdivision rule $\R$ associated with a \pcf rational map, if the corner vertex $v$ of a corner band is periodic, then $v$ is a Julia vertex.
\end{lem}
\begin{proof}
    This immediately follows from the fact that the first return map around a periodic Fatou vertex is conjugate to $z^d$ for some $d>1$. 
\end{proof}
Lemma \ref{lem:NoFatou_CornerV} should be compared with Proposition \ref{prop:CornerBand}.

\begin{lem}\label{lem:RecurrentBand=CornerBand}
   Consider a finite subdivision rule $\R$ associated with a \pcf rational map $f$. Suppose that $b^l=(t^l;(e_1^l,s_1^l),(e_2^l,s_2^l))$ is a level-$l$ recurrent band. Then, $b^l$ and all its level-$n$ recurrent subbands $b^n=\bandn$ with $n>l$ are corner bands. 
\end{lem}
\begin{proof}
    By using the $l^{th}$-shift $\R_{[l]}$, we can assume that $l=0$.

    Suppose $b^0=\band$ is recurrent. Then, there exists a level-$m$ subband $b^m=\bandm$ of $b^0$ for some $m>0$ such that $b^m$ is of type $b^0$, i.e., $f^m\from b^m(\subset b^0)\to b^0$. By taking its backward iterates, we obtain a sequence of level-$km$ bands $\{b_{km}=(t^{km};(e_1^{km},s_1^{km}),(e_2^{km},s_2^{km}))\}_{k\ge0}$ such that for each $k\ge0$, the band $b_{(k+1)m}$ is a recurrent subband of $b_{km}$ and of type $b^0$.

    Consider the canonical conformal orbifold metric $\mu$ of $f$. Let $\gamma_0$ be a $\mu$-rectifiable curve in $t^0$ that joins two interior points of $e^0_1$ and $e^0_2$. Define $\gamma_{km}$ as the lifting of $\gamma_0$ through $f^{km}$ such that $\gamma_{km}\subset t^{km}$. By the backward-contraction property (Equation \eqref{eqn:Contract_Conf.metric}) with $C:=l_\mu (\gamma_0)$, there exist $D=D(C)\in (0,1)$ such that
    \[
        l_\mu(\gamma_{km})<D^{km}\, l_\mu(\gamma_0),
    \]
    and thus $l_\mu(\gamma_{km})\to 0$ as $k\to \infty$. Recall that the two endpoints of $\gamma_{km}$ lie on the edges $e^{km}_1 \subset e^0_1$ and $e^{km}_2 \subset e^0_2$, and that $\gamma_{km} \subset t^{km} \subset t^0$. By considering the lifts of $\{\gamma_{km}\}_{k\ge1}$ via the characteristic map $\phi_{t^0} \colon \bft \to t^0$, we conclude from $\lim_{k\to\infty} l_\mu(\gamma_{km}) = 0$ that $b^0$ is a corner band.
\end{proof}

Consider the case when a level-$n$ tile $t^n$ is multiply incident to a vertex $v$. This possibility is the reason that we considered the lifts of $\{\gamma_{km}\}_{k\ge 1}$ via the characteristic map at the end of the proof of Lemma~\ref{lem:RecurrentBand=CornerBand}.
Suppose no loop-edge incident to $v$ exists. Then, there exist at least four distinct pairs $\{(e^n_i,s^n_i)\}_{i=1,2,3,4}$ with $s^n_i\in {\rm Side}(e^n_i)$ and $t^n={\rm Tile}(e^n_i,s^n_i)$ such that $(t^n;(e^n_1,s^n_1),(e^n_2,s^n_2))$ and $(t^n;(e^n_3,s^n_3),(e^n_4,s^n_4))$ are corner bands with the corner vertex $v$. Lemma \ref{lem:FourEdges} implies that there exist $i\in \{1,2\}$ and $j\in \{3,4\}$ such that $(t^n;e^n_{v,i},e^n_{v,j})$ is not a recurrent band, 

\begin{lem}\label{lem:FourEdges}
    Consider a finite subdivision rule $\R$ associated with a \pcf rational map. Let $t^n$ be a level-$n$ tile. Suppose that $\{(e^n_i,s^n_i):1\le i \le 4\}$ is a set of four pairwise distinct pairs with ${\rm Tile}(e^n_i,s^n_i)=t^n$ such that all $e^n_i$'s are incident to $v$. Then it is impossible that the six bands $(t^n,(e^n_i,s^n_i),(e^n_j,s^n_j))$'s with $i\neq j$ are all recurrent. 
\end{lem}
\begin{proof}
    By Lemma \ref{lem:RecurrentBand=CornerBand}, recurrent bands are corner bands. Let $\phi_{t^n}\from \bft \to t^n$ be the characteristic map of $t^n$. Recall that $(t^n,(e^n_{v,i},s^n_{v,i}),(e^n_{v,j},s_{v,j}^n))$ is a corner band if and only if the lifts of $(e^n_{v,i},s^n_{v,i})$ and $(e^n_{v,j},s_{v,j}^n)$ via $\phi_{t^n}$ are adjacent edges of the polygon $\bft$. Then the lemma follows from the fact that four edges of a polygon cannot be pairwise adjacent.
\end{proof}

We say that an edge $e\in \Edge(S_\R)$ is {\it periodic} if $f^n\from e\to e$ is a homeomorphism for some $n>0$. 

\begin{lem}\label{lem:No_Per_JJEdge}
    Suppose $\R$ is a finite subdivision rule associated with a \pcf rational map $f$. Then, any $JJ$-edge $e\in \Edge(S_\R)$ cannot be periodic. In particular, for every loop-edge $e\in \Edge(S_R)$ incident to a Julia vertex $v$, the edge $e$ subdivides into (strictly smaller) subedges in all sufficiently high levels. 
\end{lem}
\begin{proof}
    Suppose that $f^n\from e\to e$ is a homeomorphism for some $n>0$ and a $JJ$-edge $e\in \Edge(S_\R)$.
    By Equation \ref{eqn:Contract_Conf.metric}, we have $l_\mu(e)<D^nl_\mu(e)$ for some $D\in(0,1)$, where $\mu$ is the canonical orbifold metric. This is a contradiction.

    Suppose that $e\in \Edge(S_\R)$ is a loop edge incident to a Julia vertex $v$ such that $\R^n(e)$ consists of a single edge for every $n\ge 0$. Then, for every $n\ge 0$, the $n^{th}$-forward image $f^n(e)$ is a loop-edge in $S_\R$ such that $f\from f^n(e)\to f^{n+1}(e)$ is a homeomorphism. Hence, there exists $m>0$ such that $f^m(e)$ is a periodic $JJ$-edge, which is a contradiction.
\end{proof}



\begin{defn}[Consecutive boundary edges]
    Let $t^n$ be a level-$n$ tile of a finite subdivision rule $\R$ such that $\phi_{t^n}\from \bft \to t^n$ is its characteristic map. Edges $e^n_1,e^n_2,\dots,e^n_k$ are called {\it consecutive boundary edges} of $t^n$ if they are $\phi_{t^n}$-images of consecutive boundary edges of $\bft$.
\end{defn}

\begin{lem}\label{lem:Middle Edge}
    Consider a level-$n$ subtile $t^n$ of a level-$m$ tile $t^m$, where $n>m$. Suppose that $e^m_1,e^m_2,e^m_3$ are boundary edges of $t^m$ such that $e^n_1,e^n_2,e^n_3$ are their respective subedges in the boundary of $t^n$.
    \begin{enumerate}
        \item Suppose that $e^n_1,e^n_2,e^n_3$ are consecutive boundary edges of $t^n$. Then $e^m_1,e^m_2,e^m_3$ are consecutive boundary edges of $t^m$ and $e^m_2=e^n_2$.
        \item Suppose that $t^n$ and $t^m$ are of the same type, and  $e_i^n$ and $e_i^m$ are of the same type for $i\in\{1,3\}$. If $e^m_1,e^m_2,e^m_3$ are consecutive edges of $t^m$, then $e^n_1,e^n_2,e^n_3$ are consecutive edges of $t^n$ and $e^n_2=e^m_2$.
    \end{enumerate}
\end{lem}
\begin{proof}
    The proof follows easily from consecutiveness. For (2), to see the level-$n$ subdivision complex within $t^m$, we can consider $\R^{n-m}(\bft)$, where $\bft$ is the tile type of $t^n$ and $t^m$. We omit the details.
\end{proof}

We remark that Lemma \ref{lem:Middle Edge} covers the case when $e_2$ or $e^n_2$ is a loop-edge.

\begin{lem}\label{lem:Separated Recur VV}
Let $\R$ be a finite subdivision rule associated with a \pcf rational map. For all sufficiently large $n\ge0$ and any level-$n$ tile $t^n$,
\begin{enumerate}
    \item $t^n$ cannot be multiply incident to a periodic Julia vertex, and
    \item $t^n$ cannot be incident to two distinct periodic Julia vertices. 
\end{enumerate}
Furthermore, (1) implies that the level-$n$ links $L^n(v)$ of periodic Julia vertices $v$ are homeomorphic to circles, and (2) implies those links are disjoint.
\end{lem}
\begin{proof}
    We provide a proof of (1) only; the proof of (2) follows similarly by replacing the corner band $b^n_2=(t^n;(e^n_{v,3},s^n_{v,3}),(e^n_{v,4},s^n_{v,4}))$ below with a corner band $b^n_2:=(t^n;(e^n_{w,1},s^n_{w,1}),(e^n_{w,2},s^n_{w,2}))$ whose corner vertex is a periodic Julia vertex $w\neq v$.

    Suppose that $v$ is a periodic Julia vertex such that for each $n\ge0$, there exists a level-$n$ tile $t^n$ that is multiply incident to $v$. Recall that $\deg_{\R^n(S_\R)}(v)=\deg_{S_\R}(v)$ for every $n\ge0$ (Lemma \ref{lem:DegVertices}). Hence, we may assume that $t^0\supset t^1\supset t^2\supset \cdots$. Since $t^n$'s are multiply incident to $v$, for each $n\ge0$, there exist four pairs $\{(e^n_{v,i},s^n_{v,i})\}_{i=1,2,3,4}$ such that (i) $b^n_1:=(t^n;(e^n_{v,1},s^n_{v,1}),(e^n_{v,2},s^n_{v,2}))$ and $b^n_2:=(t^n;(e^n_{v,3},s^n_{v,3}),(e^n_{v,4},s^n_{v,4}))$ are corner bands with the corner vertex $v$, which are recurrent by Proposition \ref{prop:CornerBand}, and (ii) for each $k>l$ and $i\in\{1,2\}$, the band $b^k_i$ is a recurrent subband of $b^l_i$.

    We show that the pairs $(e^k_{v,i},s^k_{v,i})$ for $i=1,2,3,4$ are pairwise distinct for every $k\ge0$. By the condition (ii) above, it suffices to show the statement for $k=0$. By definition of bands, $(e^0_{v,i},s^0_{v,i})
    \neq(e^0_{v,j},s^0_{v,j})$ if $\{i,j\}=\{1,2\}\,{\rm or}\,\{3,4\}$.
    
    For a contradiction, assume without loss of generality that \begin{equation}\label{eqn:MidEdge}
        (e^0_{v,2},s^0_{v,2})=(e^0_{v,3},s^0_{v,3}).
    \end{equation}
    Recall that the side edges of a corner band is the image of two consecutive edges of a polygon $\bft$ under the characteristic map $\phi_{t^0}\from \bft \to t^0$. It follows that $e^0_{v,1}, e^0_{v,2}(=e^0_{v,3}), e^0_{v,4}$ are three consecutive edges of $t^n$. Since $v$ is a periodic vertex, there exists $N>0$ such that $b^{N}_1$ and $b^{N}_2$ are of type $b^0_1$ and $b^0_2$, respectively. 
    By Lemma \ref{lem:Middle Edge}, we have $(e^{N}_{v,2},s^{N}_{v,2})=(e^{N}_{v,3},s^{N}_{v,3})$ and $e^0_{v,2}=e^N_{v,2}$. Since $e^N_{v,2}$ is of type $e^0_{v,2}$, it follows that $e^0_{v,2}$ is a periodic $JJ$-edge. This contradicts Lemma \ref{lem:No_Per_JJEdge}.

    Therefore, for each $n\ge 0$, $b_{i,j}^n:=(t^n;(e^n_{v,i},s^n_{v,i}),(e^n_{v,j},s^n_{v,j}))$ is a recurrent subband of $(t^0;(e^0_{v,i},s^0_{v,i}),(e^0_{v,j},s^0_{v,j}))$ for every $i\neq j\in \{1,2,3,4\}$ such that $b^n_{i,j}$'s are pairwise distinct. This contradicts Lemma \ref{lem:FourEdges}.

    By Lemma \ref{lem:Link Circle}, (1) implies that $L^n(v)$ is homeomorphic to a circle. The disjointness of links follows immediately from (2).
\end{proof}

\begin{lem}\label{lem:Separated Recur VE}
Consider a finite subdivision rule $\R$ associated with a \pcf rational map. For all sufficiently large $n\ge0$, any level-$n$ tile cannot be incident to both a periodic Julia vertex and a level-$n$ recurrent edge that is not incident to any periodic Julia vertices.
\end{lem}
\begin{proof}
    Suppose that for every $n>0$, there exists a level-$n$ tile that is incident to a periodic Julia vertex and a level-$n$ recurrent edge not incident to any periodic Julia vertices.
    Note that (a) the number of level-$n$ tiles incident to each periodic Julia vertex is uniformly bounded (Lemma \ref{lem:DegVertices}), and (b) if a level-$n$ tile $t^n$ is incident to a periodic Julia vertex $w$, then for each $0\le m<n$, the unique level-$m$ tile $t^m$ with $t^m\supset t^n$ is also incident to $w$. Therefore, we can assume that there exist a periodic Julia vertex $v$, a nested sequence of tiles $t^0\supset t^1\supset \cdots$, and recurrent edges $e^0\supset e^1 \supset \cdots$ such that (i) $t^n$ is a level-$n$ tile that is incident to $e^n$ and $v$ and (ii) $e^n$ is a level-$n$ edge that is not incident to any periodic Julia vertices.
    
    For each $n$, we can choose one edge $e'^n$ of $t^n$ that is incident to $v$ and $e'^n\neq e^n$ such that $e'^0\supset e'^1\supset \cdots$. Then, there exists a band $b^n$ within $t^n$ whose side edges are $e^n$ and $e'^n$. Since $v$ is periodic, its incident edge $e'^n$ is a recurrent subedge of $e'^0$. It follows that $b^n$ is a level-$n$ recurrent band. By Lemma \ref{lem:RecurrentBand=CornerBand}, $b^n$ is a corner band. Hence, $e'^n$ and $e^n$ are incident to the corner vertex $w$ of $b^n$. By Proposition \ref{prop:CornerBand}, the vertex $w$ is a periodic Julia vertex, and this contradicts the assumption that $e^n$ is not incident to any periodic Julia vertices.
\end{proof}

\begin{proof}[Proof of Theorem \ref{thm:SepRecurr}]
    We take $N>0$ sufficiently large such that for every $n$ with $n>N$, the Lemmas and Propositions cited in this proof work.
    
    Recall that there is a bijection between the adjacencies of duals of level-$n$ recurrent edges and level-$n$ recurrent bands (Lemma \ref{lem:bandwithperiodicsides}).
    By Lemmas \ref{lem:NoFatou_CornerV} and \ref{lem:RecurrentBand=CornerBand}, the set of level-$n$ recurrent bands is equal to the set of level-$n$ recurrent corner bands around periodic Julia vertices. Hence, every connected component of $N_n$ is either the level-$n$ link $L^n(v)$ of a periodic Julia vertex $v$ or the dual of a level-$n$ recurrent edge that is not incident to any periodic Julia vertices.

    Then the separated recurrency follows from Lemmas \ref{lem:Separated Recur VV} and \ref{lem:Separated Recur VE}.
\end{proof}

\section{Asymtotic {\it p}-conformal energy
and Ahlfors-regular conformal dimension}\label{sec:AsympConfEng}

In this section, we first discuss the theory of conformal energies (Section \ref{sec:AsympConfEngReview}) and its application in estimating conformal dimensions of Julia sets (Section \ref{sec:ConfDimJulia}). Then, we establish the monotonicity of conformal energies under the decomposition of branched coverings (Theorem \ref{thm:MonoConfEngDim}).

\subsection{Asymtotic {\it p}-conformal energies}\label{sec:AsympConfEngReview}
Let us review the theory of conformal energies introduced by D.\@ Thurston in \cite{ThuElaGraph,ThuPosChac}.

\subsubsection{Conformal graphs and energies}$ $

\subsection*{Conformal graphs.}
Let $G$ be a finite graph. We always assume that a linear structure is given to each edge, i.e., we fix a homeomorphism from each edge $e$ to $[0,1]$ so that piecewise-linear (PL) maps between finite graphs are well-defined.

For $p\in (1,\infty]$, a {\it $p$-length} $\alpha$ on $G$ is a function $\alpha\from \Edge(G)\to \Rbb_{>0}$. When $p=\infty$, we exceptionally allow $\alpha(e)=0$ for $e\in \Edge(G)$. For $p\in (1,\infty]$, a {\it $p$-conformal graph}, denoted by $(G,\alpha)$, is a graph $G$ equipped with a $p$-length $\alpha$. In particular, if $\alpha$ is an $\infty$-length, then $(G,\alpha)$ is also called a {\it length graph}. For the case $p=1$, we assign to $G$ a {\it weight function} $w\from \Edge(G)\to \Rbb_{\ge 0}$, which is also called a {\it $1$-length}. We call $(G,w)$ a {\it weighted graph} or {\it $1$-conformal graph}.

We note that, at this point, the parameter $p$ does not play any role. When considering the energies of maps between conformal graphs, different values of $p$ will matter.

\subsection*{Singular and regular values.} Let $G$ and $H$ be finite graphs. Suppose $\phi\from H \to G$ is a PL-map. A point $x\in H$ is a {\it singular point} if it is a vertex of $H$, a preimage of a vertex of $G$, a point where the linearity of $\phi$ breaks, or $\phi$ is constant on a small neighborhood of $x$. Otherwise, $x\in H$ is a {\it regular point}. The image of a singular point is a {\it singular value}, and a point $y\in G$ is a {\it regular value} if it is not a singular value. There are only finitely many singular values.

\vspace{5pt}
Let $(G,\alpha)$ be a $p$-conformal graph, and let $(H,\beta)$ be a $q$-conformal graph with $p \ge q$. For any PL map $\phi\from (H,\beta)\to(G,\alpha)$, we define the conformal energies $E^q_p(\phi)$ and  $E^q_p[\phi]$. In this paper, we only consider two cases: $p=q$ or $1=q<p$. See \cite{ThuElaGraph} for general cases.

\subsection*{Conformal energies (when $p=q$)}
When $p=q\in (1,\infty)$, we define
\[
    \Fill^p_p(\phi)(y)= \sum\limits_{x\in\phi^{-1}(y)} |\phi'(x)|^{p-1},
\]
where $y\in G$ is a regular value. Here, $\phi'$ is evaluated by considering $\alpha$ and $\beta$ as actual lengths of edges. Although $\Fill^p_p(\phi)$ is not defined on singular values, we simply say that $\Fill^p_p(\phi)\from H \to \Rbb_{\ge0}$ is a function defined on $H$.

When $p=q=1$, the functions $\alpha$ and $\beta$ be weight functions, which we denote by $w_G$ and $w_H$, respectively. Let $\phi\from (H,w_H)\to(G,w_G)$ be a PL map between two weighted graphs. The function $\Fill^1_1(\phi)\from H\to \Rbb_{\ge0}$ is defined by
\[
    \Fill^1_1(\phi)(y)=\frac{\sum_{x\in\phi^{-1}(y)} w_H(x)}{w_G(y)},
\]
where $w_H(x)$ means the weight of the edge of $H$ containing $x$, and similar for $w_G(y)$.

The {\it $p$-conformal energy} of $\phi$, denoted by $E^p_p(\phi)$, is defined by
\begin{equation}\label{eq:ConfEngFill}
    E^p_p(\phi):= \esssup_{y\in G} \left( \Fill^p_p(\phi)(y) \right)^{1/p}
\end{equation}
for $p\in [1,\infty)$, and
\[
      E^\infty_\infty(\phi):= \esssup_{x\in H} |\phi'(x)|.
\]
We often consider the infimum in the homotopy class
\[
    E^p_p[\phi]:= \inf_{\psi \sim \phi} E^p_p(\psi),
\]
which is referred to as the $p$-conformal energy of $[\phi]$.

\subsection*{Conformal energies when $1=q<p$.}  Let $(H,w)$ be a weighted graph and $(G,\alpha)$ be a $p$-conformal graph for $p>1$. Let $\phi\from H \to G$ be a PL map. We first define the {\it multiplicity function} $n_\phi\from G \to \Rbb_{\ge 0}$ by
\[
    n_\phi(y)=\sum_{x\in \phi^{-1}(y)} w(x).
\]
Then, the conformal energy $E^1_p(\phi)$ of $\phi$ is defined by
\[
    E^1_p(\phi):=\|n_\phi\|_{p^\vee} =\left(\int_G {n_{\phi}}^{p/(p-1)}\right)^{(p-1)/p},
\]
where $p^\vee:=p/(p-1)$.

We often use the infimum in the homotopy class
\[
    E^1_p[\phi]:=\inf_{\psi\sim \phi} E^1_p(\psi).
\]
Sometimes we also consider the homotopy class $[\phi\from (H,A)\to (G,B)]$ relative to $A\subset \V(H)$, where $B=\phi(A)$.

\begin{prop}[Sub-multiplicativity {\cite[Proposition A.12]{ThuElaGraph}}]\label{prop:SubMulti}
For $1 \le p\le q\le r \le \infty$, let $F$, $G$, and $H$ are $p$-, $q$-, and $r$-conformal graphs respectively. Suppose that there are PL maps $f\from F\to G$ and $g\from G\to H$. Then, we have
\[
    E^p_r[g \circ f]\le E^q_r[g] E^p_q[f].
\]
\end{prop}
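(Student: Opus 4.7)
\textbf{Proof plan for Proposition \ref{prop:SubMulti}.}

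My plan is to first reduce to the ``pointwise'' version without homotopy class infima: for any representatives $\phi \sim f$ and $\psi \sim g$ one has $\psi \circ \phi \sim g \circ f$, so an inequality $E^p_r(\psi \circ \phi) \le E^q_r(\psi) \cdot E^p_q(\phi)$ descends to homotopy class infima after taking $\inf_\phi$ and $\inf_\psi$ separately. I would then observe that since $E^p_q[\cdot]$ is defined only when either $p = q$ or $1 = p < q$, the hypothesis $1 \le p \le q \le r \le \infty$ forces the triple $(p,q,r)$ into exactly one of three cases: (a) $p = q = r$; (b) $p = q = 1 < r$; (c) $1 = p < q = r$. In all three, the key input will be the fiber decomposition
\[
(g \circ f)^{-1}(z) \;=\; \bigsqcup_{w \in g^{-1}(z)} f^{-1}(w)
\]
at regular values $z \in H$.

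Cases (a) and (b) should be essentially automatic from the definitions. For case (a), I would use the chain rule $|(g \circ f)'| = (|g'| \circ f) \cdot |f'|$ to factor the fill function as $\Fill^p_p(g \circ f)(z) = \sum_{w \in g^{-1}(z)} |g'(w)|^{p-1} \Fill^p_p(f)(w)$, bound the inner $\Fill^p_p(f)(w)$ uniformly by $E^p_p(f)^p$, and take essential supremum. For case (b), I would use the pointwise bound $\sum_{y \in f^{-1}(w)} w_F(y) \le E^1_1(f) \cdot w_G(w)$, which comes directly from the definition of $\Fill^1_1$, to deduce $n_{g \circ f}(z) \le E^1_1(f) \cdot n_g(z)$ and then take $L^{r^\vee}$-norms.

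The main work, and where I expect the only real obstacle, lies in case (c). The fiber decomposition gives $n_{g \circ f}(z) = \sum_{w \in g^{-1}(z)} n_f(w)$, and to convert the $L^{q^\vee}$-norm of this sum into the product $E^q_q(g) \cdot E^1_q(f)$ I plan to distribute the derivative of $g$ via a weighted Hölder inequality. Writing $n_f(w) = |g'(w)|^{(q-1)/q} \cdot \bigl(n_f(w)|g'(w)|^{-(q-1)/q}\bigr)$ and applying Hölder with conjugate exponents $q$ and $q^\vee$, the first factor becomes $\Fill^q_q(g)(z)^{1/q} \le E^q_q(g)$, while the weight $|g'(w)|^{-1}$ in the second factor is precisely what will be cancelled by the Jacobian in the change of variables
\[
\int_H \sum_{w \in g^{-1}(z)}\!\!\phi(w)\,dz \;=\; \int_G \phi(w)\,|g'(w)|\,dw.
\]
Raising to the $q^\vee$-th power and integrating over $z \in H$ should then leave exactly $\int_G n_f(w)^{q^\vee} dw = E^1_q(f)^{q^\vee}$, and taking the $q^\vee$-th root gives the inequality.

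The remaining subtleties will be verifying that the finite set of singular values can be ignored in both the $L^\infty$ and $L^{q^\vee}$ estimates, and that Hölder still applies where $g'$ vanishes (which happens only at singular points, hence on a measure-zero set). Once the weighting $|g'|^{(q-1)/q}$ is chosen correctly, the rest should be routine bookkeeping with conjugate exponents.
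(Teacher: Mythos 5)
Your proof plan is correct. The paper itself omits a proof, stating that sub-multiplicativity is ``immediate from definitions,'' so there is no paper argument to compare against; what you have written out is the verification the paper leaves implicit. Your observation that, given the paper's restricted definitions ($E^a_b$ is defined only when $a=b$ or $a=1<b$), the triple $(p,q,r)$ falls into exactly the three cases $p=q=r$, $p=q=1<r$, and $1=p<q=r$ is correct and a useful organizing step. The fiber decomposition $(g\circ f)^{-1}(z)=\bigsqcup_{w\in g^{-1}(z)} f^{-1}(w)$ at regular values handles all three, with case (c) being the only one requiring a real computation: your choice of weight $|g'(w)|^{(q-1)/q}$ in H\"older, and the change-of-variables identity $\int_H \sum_{w\in g^{-1}(z)}\phi(w)\,dz=\int_G\phi(w)\,|g'(w)|\,dw$, are exactly what make the exponents cancel. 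Two small points to include when writing this up: (i) at $q=\infty$ ($q^\vee=1$) the H\"older step degenerates to the trivial bound $\sum_w n_f(w)\le(\max_w|g'(w)|)\sum_w n_f(w)/|g'(w)|$, and $E^\infty_\infty(g)$ is the Lipschitz constant, so the argument still goes through but deserves a sentence; (ii) the worry about $g'$ vanishing is moot because you only sum over $w\in g^{-1}(z)$ for \emph{regular} $z$, and at such $w$ the derivative is nonzero by definition of regular value -- edges collapsed by $g$ contribute zero on the $\int_G$ side because $|g'|=0$ there, so the change-of-variables identity is consistent. Finally, the reduction to representatives is correct: $E^p_r[g\circ f]\le E^p_r(\psi\circ\phi)$ for every $\phi\sim f$, $\psi\sim g$ (since $\psi\circ\phi\sim g\circ f$), and taking infima separately gives the stated inequality.
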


\subsection*{Energy minimizers and taut maps.} Let $\phi\from H\to G$ be a PL map where $H$ is a $q$-conformal graph and $G$ is a $p$-conformal graph with $q\le p$. The map $\phi$ is an {\it energy minimizer} if $E^q_p[\phi]=E^q_p(\phi)$. An energy minimizer always exists \cite[Theorem 6]{ThuElaGraph}.

Suppose that $(H,w)$ is a weighted graph. For an interior point $y$ of an edge of $G$, we define
\[
    n_{[\phi]}(y)=\inf\limits_{\psi\sim \phi} n_\psi(y),
\]
where the infimum is taken over $\psi$ for which $y$ is a regular value. From the definition, $n_{[\phi]}$ is constant on the interior of each edge. A map $\phi\from H \to G$ is {\it taut} if $n_{[\phi]}(y)=n_\phi(y)$ for every $y$ in the interior of an edge of $G$. It is immediate from definitions that a taut map is an $E^1_p$-energy minimizer. A taut map always exists in each homotopy class \cite[Theorem 3]{ThuElaGraph}.

\begin{prop}[{\cite[Theorem 6]{ThuElaGraph}}]\label{prop:PPEFrac1P}
For every $p\in[1,\infty]$ and any continuous map $f\from G\to H$ between two $p$-conformal graphs, we have
\[
    E^p_p[f]=\sup\limits_{c\from  W \to G} \frac{E^1_p [f \circ c]}{E^1_p [c]}.
\]
The supremum is taken over all weighted graph $W$ and PL maps $c\from W\to G$.
\end{prop}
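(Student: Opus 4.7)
My plan is to prove the two inequalities separately. The upper bound $E^p_p[f] \ge \sup_{[c]} E^1_p[f\circ c]/E^1_p[c]$ is immediate from sub-multiplicativity: applying Proposition~\ref{prop:SubMulti} with $(p,q,r) = (1,p,p)$ to the composition $W\xrightarrow{c} G\xrightarrow{f} H$ yields $E^1_p[f\circ c]\le E^p_p[f]\cdot E^1_p[c]$, so dividing gives the bound for every test map $c$.

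For the reverse inequality I would construct, for each $\varepsilon>0$, a near-optimal test map. Pick an $E^p_p$-energy minimizer $\phi\sim f$, set $M := \|\Fill^p_p(\phi)\|_\infty = E^p_p(\phi)^p$, and choose a measurable $V\subset H$ of positive measure, disjoint from the finitely many singular values of $\phi$, on which $\Fill^p_p(\phi)\ge M-\varepsilon$. Take $W := \phi^{-1}(V)$ (a disjoint union of open subarcs of edges of $G$, on each of which $\phi$ is a linear homeomorphism onto $V$), equipped with the weight function $w(x) := |\phi'(x)|^{p-1}$, and let $c:W\hookrightarrow G$ be the inclusion. The coarea formula applied to $|\phi'|^p$, combined with $(p-1)p^\vee=p$, yields
\[
    E^1_p(c)^{p^\vee} \;=\; \int_W |\phi'|^{(p-1)p^\vee}\,dx \;=\; \int_V \Fill^p_p(\phi)(z)\,dz \;\le\; M\cdot|V|,
\]
while $n_{\phi\circ c}(z) = \Fill^p_p(\phi)(z)\cdot \mathbf{1}_V(z)\ge (M-\varepsilon)\mathbf{1}_V(z)$ gives $E^1_p(\phi\circ c)^{p^\vee}\ge (M-\varepsilon)^{p^\vee}|V|$. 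Dividing and using $1-1/p^\vee = 1/p$, we obtain
\[
    \frac{E^1_p(\phi\circ c)}{E^1_p(c)} \;\ge\; \frac{M-\varepsilon}{M^{1/p^\vee}} \;=\; M^{1/p}\Bigl(1-\tfrac{\varepsilon}{M}\Bigr),
\]
which tends to $E^p_p[f]=M^{1/p}$ as $\varepsilon\to 0$.

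The main obstacle is that the proposition concerns the homotopy \emph{infima} $E^1_p[\cdot]$, while the calculation above bounds only the raw values $E^1_p(\cdot)$. The denominator is harmless, since $E^1_p[c]\le E^1_p(c)$ strengthens the ratio. For the numerator one must show $n_{[\phi\circ c]}(z) = n_{\phi\circ c}(z)$ on the interior of $V$, i.e., that $\phi\circ c$ is essentially taut. The key observation is that we are free to shrink $V$ to a tiny interval inside a single edge of $H$ avoiding singular values; then each connected component of $W$ is mapped homeomorphically by $\phi\circ c$ onto $V$ with prescribed endpoints on $\partial V$, and such a disjoint family of embedded strands admits no homotopy that reduces the pointwise multiplicity in the interior of $V$, giving tautness. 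The extreme case $p=\infty$ (so $p^\vee=1$, and $E^1_\infty$ is the total weighted image-length) is handled by the same construction with straightforward notational adjustments; the case $p=1$ degenerates since both sides are computed in the same category of weighted graphs.
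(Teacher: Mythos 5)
Your proof of the easy inequality $E^p_p[f]\ge \sup_{[c]} E^1_p[f\circ c]/E^1_p[c]$ via Proposition~\ref{prop:SubMulti} is correct, and the explicit computation of the raw (non-homotopy-infimized) energies of your test map is also correct. However, the lower bound has a fatal gap precisely at the place you flag as "the main obstacle," and the fix you propose does not work.

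Your test object $W=\phi^{-1}(V)$ is a disjoint union of open arcs, hence a contractible graph. The proposition imposes no marked points on $W$, $G$, or $H$ — note that the paper's marked-graph formalism is invoked only in Lemma~\ref{lem:contrtree} and Section~\ref{sec:PfCrochetImplyARCdimOne}, not here — so $[c]$ is the \emph{free} homotopy class, and since every component of $W$ is an arc, both $c:W\to G$ and $\phi\circ c:W\to H$ are null-homotopic. It follows that $E^1_p[c]=E^1_p[\phi\circ c]=0$, and the ratio is $0/0$. In particular your claim that the strands "with prescribed endpoints on $\partial V$ ... admit no homotopy that reduces the pointwise multiplicity" is valid only for homotopy \emph{relative} to $\partial W$; under free homotopy each strand retracts to a point and the multiplicity drops to zero. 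One cannot secretly pass to the relative category: that would change what $E^1_p[\phi\circ c]$ means and would not match the statement being proved. Also note that after passing to a reduced representative (by repeatedly removing dead ends, as in the paper), a map from a tree is collapsed entirely — a taut map in the homotopy class of $c$ is constant.

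Making your idea rigorous requires a test object with nontrivial topology — a weighted graph that genuinely detects the high-stretch region of the energy minimizer $\phi$ and cannot be collapsed by free homotopy. Producing such a $W$ (and establishing the tautness you need) is essentially the content of the convex-duality / minimax argument in \cite[Theorem~6]{ThuElaGraph}. The present paper does not re-prove the proposition; it simply defers to that reference, so the real work is in Thurston's paper and is substantially more involved than the short calculation you attempt here.
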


The next Proposition imply that the conformal energy is not changed by the extension of range.

\begin{prop}\label{prop:EngEqualities}
Fix $p\in[1,\infty]$. Let $G$ be a $p$-conformal graph. Let $H$ be a $p$-conformal homotopically non-trivial subgraph of $G$ such that $\iota\from H \hookrightarrow G$ is the embedding. Then, for every weighted graph $W$ and continuous map $f\from W\to H$, we have that
\begin{enumerate}
    \item \cite[Lemma 3.43]{ThuElaGraph} the map $f$ is taut if and only if $\iota \circ f$ is taut,
    \item $E^p_p[\iota]=1$, and
    \item $E^1_p[f]=E^1_p[\iota \circ f]$.
\end{enumerate}
Hence, for every $p$-conformal graph $F$ and continuous map $g\from F \to H$, we have
\begin{enumerate}[resume]
    \item $E^p_p[g]=E^p_p[\iota \circ g]$.
\end{enumerate}
\end{prop}
\begin{proof}
We refer to \cite[Lemma 3.43]{ThuElaGraph} for (1).

Let us show (2) and (3). Since a taut map always exists in the homotopy class, we may assume that $f\from W \to H$ is a taut map. It follows from (1) that $\iota \circ f$ is also taut. Then, we have
\begin{equation}\label{eqn:ineqn,E^1_p}
    E^1_p(\iota\circ f)=E^1_p[\iota \circ f]\le E^p_p[\iota] E^1_p[f]=E^p_p[\iota] E^1_p(f).
\end{equation}
We have $E^1_p(\iota\circ f)=E^1_p(f)\neq 0$ from the definition of $E^1_p$ and the assumption that $H$ is homotopically non-trivial. Thus $1\le E^p_p[\iota]$. Since $E^p_p[\iota]\le E^p_p(\iota)=1$, we obtain (2). Therefore, we have $E^p_p[\iota]=1$ and $E^1_p(\iota\circ f)=E^1_p(f)\neq 0$, and thus Equation \eqref{eqn:ineqn,E^1_p} implies (3).

Let us now show (4). By Theorem \ref{prop:PPEFrac1P}, we have
\[
    E^p_p[\iota \circ g]
=\sup\limits_{c\from  W \to F} \frac{E^1_p [\iota \circ g \circ c]}{E^1_p [c]}
=\sup\limits_{c\from  W \to F} \frac{E^1_p [g \circ c]}{E^1_p [c]}
=E^p_p[g].
\]
We use (3) for the middle equality.
\end{proof}

\subsubsection{Graph virtual endomorphisms and asymptotic conformal energies.}

\begin{defn}[Graph virtual endomorphism]
A {\it graph virtual endomorphism} is a quadruple $\Gcal = (G_0,G_1,\pi,\phi)$ where $G_0$ and $G_1$ are graphs, $\pi\from G_1 \to G_0$ is a covering map of finite degree on each connected component, and $\phi\from G_1 \to G_0$ is a continuous function. We often write  $\pi,\phi\from G_1\to G_0$ to indicate the graph virtual endomorphism. Moreover, we say that
\begin{itemize}
    \item $\Gcal$ is {\it connected} if $G_0$ and $G_1$ are connected, and
    \item $\Gcal$ is {\it recurrent} if $\phi_*\from \pi_1(G_1)\to \pi_1(G_0)$ is surjective.
\end{itemize}
\end{defn}

In this paper, all but the graph virtual endomorphisms of stable multicurves, which will be introduced in Section \ref{sec:MonoBrCov}, are assumed to be connected. All the graph virtual endomorphisms discussed in this article are recurrent.

\subsection*{Iterates of virtual endomorphisms.} Let $\pi,\phi\from G_1\to G_0$ be a graph virtual endomorphism. By pulling back the covering map $\pi\from G_1\to G_0$ through $\phi\from G_1\to G_0$, we obtain a covering space $G_2$ of $G_1$ with a covering map $\pi^2_1\from G_2\to G_1$. Lifting the continuous map $\phi\from G_1 \to G_0$ through the coverings $\pi$ and $\pi^2_1$, we have $\phi^2_1\from G_2\to G_1$. Let $\pi^1_0=\pi$ and $\phi^1_0=\phi$. By iterating the process, for every $n \ge 1$, we have $G_n$ and $\pi^n_{n-1}, \phi^n_{n-1}\from G_n \to G_{n-1}$. For every $n > m \ge 0$, we define
\[
    \begin{array}{cc}
        \pi^n_m:=\pi^{m+1}_m\circ \cdots \circ \pi^{n-1}_{n-2}\circ \pi^n_{n-1}\from G_n \to G_m & \mathrm{and} \\
        \phi^n_m:=\phi^{m+1}_m\circ \cdots \circ \phi^{n-1}_{n-2}\circ \phi^n_{n-1}\from G_n \to G_m & 
    \end{array}
\]

Assume that $G_0$ is equipped with a $p$-length $\alpha$ for $p\in[1,\infty]$. For every $n\ge1$, we equip $G_n$ with a $p$-length that is the lift of $\alpha$ through $\pi^n_0$. Then we can evaluate $E^p_p[\phi^n_0]$ for each $n\ge1$.

Note that we use superscripts for the levels of domains and subscripts for the levels of ranges.

\begin{defn}[Asymptotic conformal energies]\label{def:AsympEng}
Let $\Gcal=(G_0,G_1,\pi,\phi)$ be a graph virtual endomorphsim. For $p\in[1,\infty]$, the {\it asymptotic $p$-conformal energy} $\overline{E}^p(\Gcal)$ of $\Gcal$ is defined by
\[
    \overline{E}^p(\Gcal):= \lim\limits_{n\to \infty} \sqrt[n]{E^p_p[\phi^n_0]}.
\]
\end{defn}
The existence of the limit and the independence of the choice of $p$-lengths on $G_0$ in Definition \ref{def:AsympEng} follows from \cite[Proposition 5.6]{ThuPosChac}.

\subsection*{Homotopy invariance of conformal energies}
\begin{defn}[Homotopy morphisms]\label{defn:HomotopyMor}
Let $\Gcal=(G_0,G_1,\pi_G,\phi_G)$ and $\Hcal=(H_0,H_1,\pi_H,\phi_H)$ be two graph virtual endomorphisms. A {\it homotopy morphism from $\Gcal$ to $\Hcal$} is a pair of maps $\theta_0\from G_0\to H_0$ and $\theta_1\from G_1\to H_1$ satisfying $\theta_0 \circ \pi_G = \pi_H \circ \theta_1$ and $\theta_0 \circ \phi_G \sim \phi_H \circ \theta_1$, where $\sim$ means a homotopy. We write $\theta\from \Gcal \to \Hcal$ to indicate the homotopy morphism.
\end{defn}

Let $\theta\from \Gcal\to \Hcal$ be a homotopy morphism as in Definition \ref{defn:HomotopyMor}. By the homotopy lifting property, we have a sequence of maps $\{\theta_n\from G_n\to H_n\}_{n\ge0}$ such that for every $m>n\ge0$ we have
\[
	\theta_n\circ (\phi_G)^m_n \sim (\phi_H)^m_n \circ \theta_m.
\]

\begin{defn}[Homotopy equivalence]
Two graph virtual endomorphisms $\Gcal=(G_0,G_1,\pi_G,\phi_G)$ and $\Hcal=(H_0,H_1,\pi_H,\phi_H)$ are {\it homotopy equivalent} if there exist morphisms $\theta\from \Gcal\to \Hcal$ and $\eta\from \Hcal\to \Gcal$ such that $\theta_0 \circ \eta_0 \sim id_{H_0}$ and $\eta_0 \circ \theta_0 \sim id_{G_0}$.
\end{defn}

The conformal energies are invariant under homotopy equivalence. More precisely, the terms in the limit in Definition \ref{def:AsympEng} differ by constant multiplications and additions, which vanish as $n$ tends to the infinity.

\begin{prop}[Homotopy invariance of conformal energies {\cite[Proposition 5.7]{ThuPosChac}}]\label{prop:HomoInvConfEng}
Let $\Gcal=(G_0,G_1,\pi_G,\phi_G)$ and $\Hcal=(H_0,H_1,\pi_H,\phi_H)$ be homotopically equivalent graph virtual endomorphisms. Then, for every $p \in [1,\infty]$, we have $\overline{E}^p(\Gcal)=\overline{E}^p(\Hcal)$.
\end{prop}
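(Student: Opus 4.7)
The plan is to chain the homotopy $\mathrm{id}_{H_0}\sim \theta_0\circ \eta_0$ with submultiplicativity of conformal energies (Proposition~\ref{prop:SubMulti}), once the morphism data has been upgraded to strict compatibility with the covering projections $(\pi_G)^n_{n-1}$ and $(\pi_H)^n_{n-1}$ at every level.

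First, I would inductively construct the sequences $\theta_n:G_n\to H_n$ and $\eta_n:H_n\to G_n$ so that, in addition to the $\phi$-compatibility already noted in the text, they satisfy
\[
    (\pi_H)^n_{n-1}\circ \theta_n = \theta_{n-1}\circ (\pi_G)^n_{n-1}, \qquad (\pi_G)^n_{n-1}\circ \eta_n = \eta_{n-1}\circ (\pi_H)^n_{n-1}
\]
as strict equalities. The base case $n=1$ is part of the definition of a homotopy morphism. For the inductive step I would use that $(\pi_G)^n_{n-1}$ is the pullback of $\pi_G$ along $(\phi_G)^{n-1}_0$, and analogously on the $H$-side: giving a map $G_n\to H_n$ amounts to giving a pair of maps $G_n\to H_{n-1}$ whose compositions with $\pi_H$ and $\phi_H$ agree. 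Taking the first of the pair to be $\theta_{n-1}\circ (\pi_G)^n_{n-1}$ forces the $\pi$-equation, and the one-step homotopy $\phi_H\circ \theta_1\sim \theta_0\circ \phi_G$ (iterated via the inductive hypothesis) produces a candidate second map whose $\pi_H$-image agrees with the required target up to homotopy; the homotopy lifting property through $\pi_H$ then promotes this to a strict equality. Running the same construction for $\eta$ and chaining the strict $\pi$-equations gives $(\pi_H)^n_0\circ \theta_n = \theta_0\circ (\pi_G)^n_0$, while chaining the homotopies yields $\theta_0\circ (\phi_G)^n_0\sim (\phi_H)^n_0\circ \theta_n$ and $\eta_0\circ (\phi_H)^n_0\sim (\phi_G)^n_0\circ \eta_n$.

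The strict $\pi$-compatibility exhibits $\theta_n$ (resp.\ $\eta_n$) as a lift of $\theta_0$ (resp.\ $\eta_0$) through coverings whose $p$-length structures are pullbacks. Precisely the argument that produces inequality \eqref{eqn:EnergyLift} then yields the uniform bounds $E^p_p[\theta_n]\le E^p_p[\theta_0]$ and $E^p_p[\eta_n]\le E^p_p[\eta_0]$ for every $n\ge 0$. Combining the homotopy $\mathrm{id}_{H_0}\sim \theta_0\circ \eta_0$, the relation $\eta_0\circ (\phi_H)^n_0\sim (\phi_G)^n_0\circ \eta_n$, the homotopy invariance of $E^p_p[\cdot]$, and Proposition~\ref{prop:SubMulti} applied twice, I would estimate
\[
    E^p_p[(\phi_H)^n_0] = E^p_p[\theta_0\circ (\phi_G)^n_0 \circ \eta_n] \le C\cdot E^p_p[(\phi_G)^n_0],
\]
where $C := E^p_p[\theta_0]\cdot E^p_p[\eta_0]$ is independent of $n$. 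Taking $n$th roots and letting $n\to\infty$ gives $\overline{E}^p(\Hcal)\le \overline{E}^p(\Gcal)$, and swapping the roles of $\theta$ and $\eta$ yields the reverse inequality.

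The main obstacle is the first step: arranging $\theta_n$ and $\eta_n$ to be \emph{strict} lifts through covering maps (so that the covering inequality \eqref{eqn:EnergyLift} applies to them) while keeping the $\phi$-compatibility only up to homotopy, which is all the homotopy equivalence provides. Once this upgrade is in hand, the rest of the argument is a direct diagram chase plus two applications of submultiplicativity.
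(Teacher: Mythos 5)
Your argument is correct and makes precise what the paper only indicates in the one-sentence remark preceding the proposition (that the terms in the limit differ by constants which vanish under $\sqrt[n]{\cdot}$ as $n\to\infty$); the paper gives no separate proof. Your mechanism — construct $\theta_n,\eta_n$ so that they are strict lifts of $\theta_0,\eta_0$ through the covering towers, so that the covering-lift argument behind \eqref{eqn:EnergyLift} gives the uniform bounds $E^p_p[\theta_n]\le E^p_p[\theta_0]$ and $E^p_p[\eta_n]\le E^p_p[\eta_0]$, and then apply Proposition~\ref{prop:SubMulti} to $(\phi_H)^n_0\sim \theta_0\circ(\phi_G)^n_0\circ\eta_n$ to obtain the $n$-independent multiplicative constant — is exactly the intended route.
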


\subsection{Julia sets of virtual endomorphisms}\label{sec:ConfDimJulia}
Let $\Gcal=(G_0,G_1,\pi,\phi)$ be a connected graph virtual endomorphism, i.e., $G_0$ and $G_1$ are connected. We say that $\Gcal$ is {\it forward-expanding (or backward-contracting)} if $\overline{E}^\infty(\Gcal)<1$. We define the {\it Julia set} $\Jcal_\Gcal$ of $\Gcal$ as the inverse limit of the system
\[
    \{\phi^n_m\from G_n\to G_m \mid n>m\ge0\}.
\]
If $\Gcal$ is recurrent and forward-expanding, then the Julia set $\Jcal_\Gcal$ is locally connected and has a visual metric $d_{vis.}$, which is Ahlfors-regular and defined up to quasi-symmetry \cite[Theorem F]{PilThu_ARConfDim}. More precisely, the visual metric is determined up to snowflake equivalence, and any snowflake map is a quasi-symmetry. 

\begin{defn}[Ahlfors-regular gauge of Julia set]
The quasi-symmetric class of Ahlfors-regular metrics on the Julia set $\Jcal_\Gcal$ containing visual metrics is called the {\it Ahlfors-regular gauge} of $\Jcal_\Gcal$.
\end{defn}

The covering map $\pi$ induces a self-covering map of $\Jcal_\Gcal$, which we also denote by $\pi$. If $\Gcal=(G_0,G_1,\pi_G,\phi_G)$ and $\Hcal=(H_0,H_1,\pi_H,\phi_H)$ are homotopically equivalent graph endomorphisms, then their self-coverings maps on the Julia sets $\pi_\Gcal\from \Jcal_\Gcal\righttoleftarrow$ and $\pi_\Hcal\from \Jcal_\Hcal\righttoleftarrow$ are topologically conjugate. Furthermore, the topological conjugacy preserves the Ahlfors-regular gauges of $\Jcal_\Gcal$ and $\Jcal_\Hcal$.

\begin{defn}[Ahlfors-regular conformal dimension of Julia set]
For a connected recurrent forward-expanding graph virtual endomorphism $\Gcal$, we define the {\it Ahlfors-regular conformal dimension of the Julia set} $\Jcal_\Gcal$, denoted by $\ARC(\Jcal_\Gcal)$, to be the infimum of the Hausdorff dimensions with respect to the metrics in the Ahlfors-regular gauge of $\Jcal_\Gcal$.
\end{defn}

\begin{thm}[Pilgrim-D.\@Thurston \cite{PilThu_ARConfDim}]\label{thm:ARCdim}
Suppose $\Gcal=(G_0,G_1,\pi,\phi)$ is a connected recurrent forward-expanding graph virtual endomorphism. For $p_*=\ARC(\Jcal_\Gcal)$, we have $\overline{E}^{p_*}(\Jcal_\Gcal)=1$.
\end{thm}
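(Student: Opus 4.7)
The plan is to relate the asymptotic $p$-conformal energy $\overline{E}^p(\Gcal)$ to a combinatorial $p$-modulus on the approximating graphs $G_n$, and then invoke the modulus-based characterization of the Ahlfors-regular conformal gauge of the Julia set. The strategy amounts to establishing the two one-sided bounds $\overline{E}^p(\Gcal)<1$ for $p>p_*$ and $\overline{E}^p(\Gcal)\geq 1$ for $p<p_*$, and then pinning $\overline{E}^{p_*}(\Gcal)=1$ at the critical exponent using monotonicity of $p\mapsto\overline{E}^p(\Gcal)$ (Theorem \ref{thm:PropertyOfComfEnergy}(3)) together with a convexity/continuity argument.

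First, for each $n$ I would equip $G_n$ with the $p$-length lifted through $\pi^n_0$ and apply Proposition \ref{prop:PPEFrac1P} to recast
\[
E^p_p[\phi^n_0] \;=\; \sup_{[c]:W\to G_n} \frac{E^1_p[\phi^n_0\circ c]}{E^1_p[c]},
\]
identifying the right-hand side, up to multiplicative constants uniform in $n$, with a combinatorial $p$-modulus $\mathrm{Mod}_p(\mathcal{F}_n)$ of a natural family $\mathcal{F}_n$ of PL curves in $G_n$ obtained by lifting a fixed family $\mathcal{F}_0$ in $G_0$ through $\pi^n_0$. The asymptotic exponent $\overline{E}^p(\Gcal)$ therefore agrees, after taking $n$-th roots and letting $n\to\infty$, with the exponential growth rate of $\mathrm{Mod}_p(\mathcal{F}_n)$.

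Second, I would transfer the modulus computation to the Julia set. Since $\Gcal$ is recurrent and forward-expanding, the system $(G_n,\pi^n_0)$ approximates $\Jcal_\Gcal$ by covers whose pieces have visual diameters decaying at a fixed geometric rate. This is precisely the Keith--Kleiner / Carrasco Piaggio setting in which the Ahlfors-regular conformal dimension of a self-similar metric space is characterized as the critical exponent at which a combinatorial $p$-modulus crosses the value $1$. A near-optimal AR metric in the gauge of dimension $p_*+\epsilon$ produces weighting data on the $G_n$ that bound $E^p_p[\phi^n_0]$ and force $\overline{E}^{p_*+\epsilon}(\Gcal)\leq 1$; conversely, if $\overline{E}^p(\Gcal)<1$, a Frostman-type averaging of near-extremal weights for $E^p_p[\phi^n_0]$ produces an AR metric of dimension $p$ in the gauge, giving $\ARC(\Jcal_\Gcal)\leq p$. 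These two constructions deliver the desired one-sided bounds.

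The main obstacle will be handling the boundary value at $p_*$ itself. Combined with monotonicity, the two one-sided bounds only give
\[
\lim_{p\downarrow p_*}\overline{E}^p(\Gcal) \;\leq\; \overline{E}^{p_*}(\Gcal) \;\leq\; \lim_{p\uparrow p_*}\overline{E}^p(\Gcal),
\]
with $1$ lying in this interval but not a priori equal to $\overline{E}^{p_*}(\Gcal)$. The key additional input is left/right continuity of $p\mapsto \overline{E}^p(\Gcal)$ at $p_*$, which can be extracted from a log-convexity property of $p\mapsto E^p_p[\phi^n_0]$ following from Hölder's inequality (interpolating between $p$-lengths on domain and range); this log-convexity passes to $\overline{E}^p(\Gcal)$ in the limit and makes the monotone function continuous on the interior $(1,\infty)$. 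The other technical challenge is the uniform-in-$n$ comparison between $E^p_p[\phi^n_0]$ and $\mathrm{Mod}_p(\mathcal{F}_n)$ in the first step: the multiplicative constants depend on local combinatorics of $G_0$ such as maximal vertex degrees, and one must verify they remain bounded independently of $n$ so as not to spoil the exponential asymptotic rates.
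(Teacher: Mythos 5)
The paper states Theorem \ref{thm:ARCdim} as a result imported from \cite{PilThu_ARConfDim} and does not give a proof of its own, so there is no in-paper argument to compare against. On its own merits, your outline has the right logical architecture: two one-sided bounds ($\overline{E}^p(\Gcal)\le 1$ for $p>p_*$ and $\overline{E}^p(\Gcal)\ge 1$ for $p<p_*$) combined with continuity of $p\mapsto\overline{E}^p(\Gcal)$ at $p_*$, and the intuition that the asymptotic energies should match a combinatorial-modulus critical exponent for the sequence of covers $(G_n,\pi^n_0)$ is the right one.

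That said, the two steps that carry the entire weight of the theorem are asserted rather than argued. First, the claimed identification of $E^p_p[\phi^n_0]$, via Proposition \ref{prop:PPEFrac1P}, with a Keith--Kleiner / Carrasco Piaggio combinatorial $p$-modulus ``up to multiplicative constants uniform in $n$'' is precisely what needs to be proved: $E^p_p$ is an $\esssup$ of fill functions infimized over a homotopy class, whereas combinatorial modulus is an extremal problem over admissible weights on a sequence of covers of the metric space, and bridging the two requires constants controlled by the level-$0$ data together with verification that $\Jcal_\Gcal$ lies in the class (approximately self-similar, uniformly perfect, doubling) for which the critical-exponent characterization of $\ARC$ holds. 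Invoking the framework does not discharge this. Second, the continuity at $p_*$ is claimed to follow from log-convexity of $p\mapsto E^p_p[\phi^n_0]$ via H\"older, but for a fixed map $\psi$ the expression $\left(\sum_{x\in\psi^{-1}(y)}|\psi'(x)|^{p-1}\right)^{1/p}$ does not have the exponent structure of a standard $L^q$-norm interpolation (the outer $1/p$ does not match the inner $p-1$), and taking $\esssup_y$ and then the infimum over homotopies further obscures the convexity claim. As you yourself note, without continuity the two one-sided bounds together with monotonicity only localize $\overline{E}^{p_*}(\Gcal)$ to the closed interval between the two one-sided limits, which contains $1$ but is not pinned to it; this cannot be left as an assertion.
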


\subsection{Shift and power}
\begin{defn}[Shift]
Let $\Gcal=(G_0,G_1,\pi,\phi)$ be a graph virtual endomorphism. For an integer $k\ge0$, the {\it $k^{th}$-shift $\Gcal_{[k]}$ of $\Gcal$} is defined by $\Gcal_{[k]}=(G_k,G_{k+1},\pi^{k+1}_k,\phi^{k+1}_k)$.
\end{defn}

\begin{prop}\label{prop:Energy Inv under Shift}
Let $\Gcal$ be a graph virtual endomorphism and $\Gcal_{[k]}$ be its $k^{th}$-shift for some integer $k\ge0$. For every $p\in[1,\infty]$, we have 
\[
    \overline{E}^p(\Gcal)=\overline{E}^p(\Gcal_{[k]}).
\]
\end{prop}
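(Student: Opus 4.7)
The plan is to show both inequalities $\overline{E}^p(\Gcal_{[k]}) \le \overline{E}^p(\Gcal)$ and $\overline{E}^p(\Gcal) \le \overline{E}^p(\Gcal_{[k]})$ using only the two tools already available in the excerpt: the lifting inequality \eqref{eqn:EnergyLift} and the sub-multiplicativity of $p$-conformal energies (Proposition \ref{prop:SubMulti}). First I would identify the $n^{th}$ iterate of the shifted system: applying the construction of iterates to $\Gcal_{[k]} = (G_k,G_{k+1},\pi^{k+1}_k,\phi^{k+1}_k)$ and using the uniqueness of lifts gives $(\pi_{[k]})^n_0 = \pi^{n+k}_k$ and $(\phi_{[k]})^n_0 = \phi^{n+k}_k$, with the $p$-length on $G_k$ pulled back from $G_0$ along $\pi^k_0$ (so the $p$-length it induces on $G_{n+k}$ via $\pi^{n+k}_k$ agrees with the one obtained by pulling $\alpha$ back along $\pi^{n+k}_0$). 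Thus
\[
    \overline{E}^p(\Gcal_{[k]}) = \lim_{n\to\infty} \sqrt[n]{E^p_p[\phi^{n+k}_k]}.
\]

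For the direction $\overline{E}^p(\Gcal_{[k]}) \le \overline{E}^p(\Gcal)$, note that $\phi^{n+k}_k$ is a lift of $\phi^n_0$ through the coverings $\pi^{n+k}_k$ and $\pi^k_0$, so the argument giving \eqref{eqn:EnergyLift} yields $E^p_p[\phi^{n+k}_k] \le E^p_p[\phi^n_0]$; taking $n^{th}$ roots and passing to the limit does the job. For the reverse inequality, apply sub-multiplicativity to the factorization $\phi^{n+k}_0 = \phi^k_0 \circ \phi^{n+k}_k$ (Proposition \ref{prop:SubMulti}) to obtain
\[
    E^p_p[\phi^{n+k}_0] \le E^p_p[\phi^k_0]\cdot E^p_p[\phi^{n+k}_k].
\]
Taking the $(n+k)^{th}$ root, the factor $E^p_p[\phi^k_0]^{1/(n+k)}$ tends to $1$, while $E^p_p[\phi^{n+k}_k]^{1/(n+k)}$ and $E^p_p[\phi^{n+k}_k]^{1/n}$ have the same limit as $n\to\infty$ (sub-multiplicativity of the sequence $\{E^p_p[\phi^{n+k}_k]\}_n$ ensures the limits exist and are equal, since $\log E^p_p[\phi^{n+k}_k]/n$ and $\log E^p_p[\phi^{n+k}_k]/(n+k)$ differ by a factor $n/(n+k)\to 1$ with bounded growth). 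This gives $\overline{E}^p(\Gcal) \le \overline{E}^p(\Gcal_{[k]})$.

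I expect no real obstacle; the argument is a bookkeeping exercise once one recognizes that $(\phi_{[k]})^n_0 = \phi^{n+k}_k$ and isolates $\phi^k_0$ as a fixed prefix that disappears in the exponential limit. The only mild subtlety is being careful with the choice of $p$-length on $G_k$ for $\Gcal_{[k]}$ (pulling back $\alpha$ via $\pi^k_0$), so that the energies appearing in the two sides of each inequality really are computed with respect to compatible metrics; since the asymptotic energy is independent of the choice of $p$-length up to bounded multiplicative error, this compatibility is only needed to get clean inequalities at finite level and does not affect the limit.
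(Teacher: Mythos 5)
Your proof is correct and follows essentially the same route as the paper's: both establish $\overline{E}^p(\Gcal_{[k]})=\lim_n\sqrt[n]{E^p_p[\phi^{n+k}_k]}$ and then sandwich via the same two inequalities, $E^p_p[\phi^{n+k}_k]\le E^p_p[\phi^n_0]$ from the lifting property and $E^p_p[\phi^{n+k}_0]\le E^p_p[\phi^k_0]\cdot E^p_p[\phi^{n+k}_k]$ from sub-multiplicativity. You just spell out the bookkeeping (the $n$ vs.\ $n+k$ roots and the choice of $p$-length on $G_k$) that the paper leaves implicit.
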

\begin{proof}
For all $n,k >0$ we have $E^p_p[\phi^{n+k}_k]\le E^p_p[\phi^{n}_0]$ because every homotopy of $\phi^{n}_0$ can be lifted through $f^k$ to a homotopy of $\phi^{n+k}_k$ . Hence for any $n,k>0$
\[
    E^p_p[\phi^{n+k}_0]\le E^p_p[\phi^{n+k}_k]\,E^p_p[\phi^k_0] \le E^p_p[\phi^n_0]\, E^p_p[\phi^k_0].
\]
Because
\[
    \overline{E}^p(\Gcal_{[k]})=\lim\limits_{n\to \infty} \sqrt[n]{E^p_p[\phi^{n+k}_k]}\,,
\]
we can obtain the conclusion from the above inequalities.
\end{proof}

\begin{defn}[Power]
Let $\Gcal=(G_0,G_1,\pi,\phi)$ be a graph virtual endomorphism. For an integer $k\ge0$, the {\it $k^{th}$-power $\Gcal^k$ of $\Gcal$} is defined by $\Gcal^k=(G_0,G_k,\pi^k_0,\phi^k_0)$.
\end{defn}

The following proposition is immediate from the definitions.

\begin{prop}\label{prop:ConfEngPower}
Let $\Gcal$ be a graph virtual endomorphism and $\Gcal^k$ be the $k^{th}$-power of $\Gcal$ for some integer $k\ge 1$. For every $p\in[1,\infty]$, we have
\[
    \overline{E}^p(\Gcal)=\left(\overline{E}^p(\Gcal^k)\right)^{1/k}.
\]
\end{prop}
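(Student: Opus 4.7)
The plan is to unwind definitions and reduce the statement to an elementary fact about subsequences of the sequence $a_n := E^p_p[\phi^n_0]$. First I would observe that the iterates of $\Gcal^k = (G_0, G_k, \pi^k_0, \phi^k_0)$ are built by pulling back the covering $\pi^k_0$ through $\phi^k_0$ and lifting. By the uniqueness (up to homotopy) of such lifts, the $n^{th}$ tower of coverings and maps for $\Gcal^k$ can be identified with the sub-tower of the tower for $\Gcal$ corresponding to levels $0, k, 2k, \dots, nk$; in particular, if $\tilde\phi^n_0$ denotes the $n^{th}$ composition for $\Gcal^k$, then $\tilde\phi^n_0$ is homotopic (in the appropriate sense) to $\phi^{nk}_0$. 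Hence
\[
E^p_p\!\left[\tilde\phi^n_0\right] = E^p_p\!\left[\phi^{nk}_0\right].
\]

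Next I would apply the definition of asymptotic $p$-conformal energy to both sides. The limit $\overline{E}^p(\Gcal) = \lim_{n\to\infty} a_n^{1/n}$ exists by Fekete's lemma applied to the sub-multiplicative sequence $a_n$, as noted in the paragraph following Definition \ref{def:AsympEng}. Call this limit $L$. Then
\[
\overline{E}^p(\Gcal^k) = \lim_{n\to\infty} \bigl(E^p_p[\tilde\phi^n_0]\bigr)^{1/n} = \lim_{n\to\infty} a_{nk}^{1/n} = \lim_{n\to\infty} \bigl(a_{nk}^{1/(nk)}\bigr)^k = L^k,
\]
using that any subsequence of a convergent sequence has the same limit. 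Taking $k^{th}$ roots yields the claimed identity $\overline{E}^p(\Gcal) = \bigl(\overline{E}^p(\Gcal^k)\bigr)^{1/k}$.

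The only step that requires any care is the first one, identifying the iterated tower of $\Gcal^k$ with the subsampled tower of $\Gcal$. This comes down to the fact that the pullback construction defining the iterates is associative up to canonical homeomorphism: the pullback of $\pi^k_0$ along $\phi^k_0$ is the same (as a covering) as the composition $\pi^{2k}_k \circ \pi^{(2k \to k)}_{(\cdots)}$ built from successive single-step pullbacks, and similarly $\tilde\phi^n_0$ is homotopic to $\phi^{nk}_0$ by repeated application of the homotopy lifting property. Once this identification is in hand, the energy equality $E^p_p[\tilde\phi^n_0] = E^p_p[\phi^{nk}_0]$ is immediate from the homotopy invariance of $E^p_p[\,\cdot\,]$, and the rest is a one-line computation with limits.
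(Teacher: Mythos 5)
Your proof is correct and matches the paper's (very terse) treatment: the paper simply asserts this proposition is ``immediate from the definitions,'' and your argument is exactly the unpacking of that remark. The two ingredients you isolate — the canonical identification of the iterated tower of $\Gcal^k$ with the levels $0, k, 2k, \dots$ of the tower for $\Gcal$ (so that $E^p_p[\tilde\phi^n_0]=E^p_p[\phi^{nk}_0]$), and the elementary fact that if $a_n^{1/n}\to L$ then $a_{nk}^{1/n}=(a_{nk}^{1/(nk)})^k\to L^k$ — are precisely what makes the claim follow from Definition~\ref{def:AsympEng}. One small point worth stating explicitly: the identification is not merely a homotopy of maps but an isomorphism of the $p$-conformal graphs $\tilde G_n\cong G_{nk}$ (both carry the $p$-length pulled back from $G_0$ along the respective covering towers, and these towers agree), after which the homotopy of the $\phi$-maps gives the energy equality; you gesture at this with ``in the appropriate sense,'' and it is exactly the compatibility the paper invokes around equation~(\ref{eqn:EnergyLift}).
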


\subsection{Monotonicity of conformal energies: Graph virtual endomorphisms}

\begin{thm}[Monotonicity of conformal energies of graph virtual endomorphisms]\label{thm:MonoGraph}
Let $\Gcal=(G_0,G_1,\pi_G,\phi_G)$ and $\Hcal=(H_0,H_1,\pi_H,\phi_H)$ be graph virtual endomorphisms. Suppose that there is a homotopy morphism $\theta\from \Hcal \rightarrow \Gcal$ such that $\theta_0\from H_0\to G_0$ is an embedding (so that $\theta_n\from H_n\to G_n$ is also an embedding for every $n \ge 0$). Then, for every $p\in [1,\infty]$, we have
\[
    \overline{E}^p(\Hcal) \le \overline{E}^p(\Gcal).
\]
\end{thm}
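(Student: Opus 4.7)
The plan is to compare $E^p_p[(\phi_H)^n_0]$ with $E^p_p[(\phi_G)^n_0]$ at each level $n$ and then pass to the asymptotic limit. The ingredients needed are the preservation of $p$-conformal energy under range-extension by a subgraph embedding (Proposition \ref{prop:EngEqualities}(4)), the identity $E^p_p[\iota]=1$ for an embedding (Proposition \ref{prop:EngEqualities}(2)), and the sub-multiplicativity of conformal energies under composition (Proposition \ref{prop:SubMulti}).

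First I would iterate the homotopy morphism $\theta:\Hcal\to\Gcal$ using the homotopy lifting property to obtain the sequence of embeddings $\theta_n:H_n\hookrightarrow G_n$ hypothesized in the theorem, satisfying $\theta_0\circ(\pi_H)^n_0=(\pi_G)^n_0\circ\theta_n$ and $\theta_0\circ(\phi_H)^n_0\sim(\phi_G)^n_0\circ\theta_n$ for every $n\ge 0$. I would equip each $H_n$ with the $p$-length obtained by restricting, through $\theta_n$, the $p$-length lifted onto $G_n$ from $G_0$ via $(\pi_G)^n_0$. The commutation with $\pi$ ensures this restricted length agrees with the lift of the $p$-length on $H_0$ through $(\pi_H)^n_0$, so it matches the length implicitly used to define $\overline{E}^p(\Hcal)$.

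The core computation will then be the chain
\[
E^p_p\bigl[(\phi_H)^n_0\bigr]
= E^p_p\bigl[\theta_0\circ(\phi_H)^n_0\bigr]
= E^p_p\bigl[(\phi_G)^n_0\circ\theta_n\bigr]
\le E^p_p\bigl[(\phi_G)^n_0\bigr]\cdot E^p_p[\theta_n]
= E^p_p\bigl[(\phi_G)^n_0\bigr].
\]
The first equality uses Proposition \ref{prop:EngEqualities}(4) with the embedding $\theta_0$; the second is homotopy invariance of $E^p_p[\cdot]$; the inequality is the sub-multiplicativity of Proposition \ref{prop:SubMulti} with $p=q=r$; and the last equality uses Proposition \ref{prop:EngEqualities}(2) for the embedding $\theta_n$, whose domain $H_n$ carries the restricted length and hence embeds isometrically. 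Taking $n$-th roots and letting $n\to\infty$ yields $\overline{E}^p(\Hcal)\le\overline{E}^p(\Gcal)$.

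The one point needing care is the ``homotopically non-trivial subgraph'' hypothesis in Proposition \ref{prop:EngEqualities}. If $H_0$ is a tree then every map into any $H_n$ is null-homotopic (since coverings of trees are disjoint unions of trees), hence $E^p_p[(\phi_H)^n_0]=0$ and the inequality is automatic. Otherwise $H_0$ contains an essential loop, which embeds as an essential loop in $G_0$ via $\theta_0$ and persists at every level because each $\pi^n_0$ is a covering. I expect this tree/non-tree dichotomy to be the only delicate step; the rest is a formal chase through the already established properties of $p$-conformal energies.
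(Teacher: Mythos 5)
Your proof follows the same chain of equalities and inequalities as the paper's proof of Theorem \ref{thm:MonoGraph}: range-extension invariance (Proposition \ref{prop:EngEqualities}), the commutation $\theta_0\circ(\phi_H)^n_0\sim(\phi_G)^n_0\circ\theta_n$, sub-multiplicativity, and $E^p_p[\theta_n]=1$. Your added remark on the tree vs.\ non-tree dichotomy (needed to justify the ``homotopically non-trivial'' hypothesis of Proposition \ref{prop:EngEqualities}) is a degenerate case the paper leaves implicit; it is correct and harmless, as $E^p_p[(\phi_H)^n_0]=0$ there, so the proposal is sound and essentially identical in method.
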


\begin{proof}
Recall that $\overline{E}^p$ is independent of the choice of $p$-length. Assign a $p$-length $\alpha$ to $G_0$. Then, for each $n\ge1$, pull $\alpha$ back to $G_n$ via $(\pi_G)^n_0$, and then to $H_n$ via $\theta_n$.

Since $\theta_n\from H_n \to G_n$ is an embedding, $E^p_p[\theta_n]=1$ for every $n\in\Nbb$ by Proposition \ref{prop:EngEqualities}. It follows that
\[
   E^p_p[(\phi_G)^n_0 \circ \theta_n] \le E^p_p[(\phi_G)^n_0] E^p_p[\theta_n]=E^p_p[(\phi_G)^n_0].
\]
On the other hand, it also follows from Proposition \ref{prop:EngEqualities} that
\[
    E^p_p[(\phi_H)^n_0]=E^p_p[\theta_0 \circ (\phi_H)^n_0]=E^p_p[ (\phi_G)^n_0\circ \theta_n].
\]
Therefore, for every $n\ge 0$, we have $E^p_p[(\phi_H)^n_0] \le E^p_p[(\phi_G)^n_0]$.
\end{proof}

\subsection{Virtual endomorphisms of \pcf branched coverings}

Suppose that $\TmapA$ is a \pcf hyperbolic-type branched covering. Let $G_0$ be a finite graph that is a {\it spine} of $(S^2,A)$, i.e., $G_0 \subset S^2 \setminus A$ is a deformation retract of $S^2 \setminus A$. Define $G_1:=f^{-1}(G_0)$. Then, $G_1$ is a spine of $(S^2,f^{-1}(A))$. Since $f(A) \subset A$, we have $A \subset f^{-1}(A)$. The inclusion $S^2\setminus f^{-1}(A) \hookrightarrow S^2\setminus A$ defines a homotopy class of continuous maps $[\phi\from G_1\to G_0]$, where $\phi$ is a representative. Then, the quadruple $\Gcal_f:=(G_0,G_1,f,\phi)$ is a {\it virtual endomorphism of $f$}.

There are different choices of the spine $G_0$ and the representative $\phi$, but all the choices give rise to the same graph virtual endomorphism up to homotopy equivalence.

For $\Gcal_f$ given as above, the $n^{th}$-iterate $G_n$ is easily obtained as the $n^{th}$-preimage $f^{-n}(G_0)$. The homotopy class $[\phi^n_0]$ is induced from the inclusion $S^2\setminus f^{-n}(A) \hookrightarrow S^2\setminus A$.


The next proposition implies that, for any hyperbolic-type \pcf branched covering $\TmapA$, the asymptotic conformal energy is invariant under the choice of the set of marked points $A$. Hence, we simply denote by $\overline{E}^p(f)$ the asymptotic $p$-conformal energy of $\TmapA$.

\begin{prop}[Independence of the choice of marked points]
Suppose $\TmapA$ is a hyperbolic-type \pcf branched covering, with $f(A) \cup P_f \subset A$, and $\Gcal$ is its graph virtual endomorphism. Let $\Hcal$ be a graph virtual endomorphism of $f\from (S^2,P_f)\righttoleftarrow$. Then $\overline{E}^p(\Gcal)=\overline{E}^p(\Hcal)$ for every $p\in[1,\infty]$.
\end{prop}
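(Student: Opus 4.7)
The plan is to establish $\overline{E}^p(\Gcal) = \overline{E}^p(\Hcal)$ by comparing the finite-level conformal energies $E^p_p[\phi_G^n]$ and $E^p_p[\phi_H^n]$ at each level $n$ up to multiplicative constants independent of $n$; taking $n$-th roots then gives equality. First I would choose the spines so that $H_0 \subset G_0$ is a subgraph, with $G_0 = H_0 \cup \bigcup_{a \in A \setminus P_f} L_a$, where each $L_a$ is a small loop around $a$ joined to $H_0$ by an arc; pulling back through $f^n$ then gives $H_n \subset G_n$ for every $n$, with $G_n \setminus H_n$ consisting of similar loops around the extra punctures $f^{-n}(A \setminus P_f)$. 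Let $\iota_n : H_n \hookrightarrow G_n$ denote the inclusions and $\rho : G_0 \to H_0$ the retraction collapsing each $L_a$ to its attachment point.

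The inequality $\overline{E}^p(\Hcal) \le \overline{E}^p(\Gcal)$ is comparatively straightforward. One checks on $\pi_1(H_n)$ that $\phi_H^n$ and $\rho \circ \phi_G^n \circ \iota_n$ both kill loops around $f^{-n}(P_f) \setminus P_f$ and preserve those around $P_f$; since $H_0$ is a $K(\pi,1)$, this gives $\phi_H^n \sim \rho \circ \phi_G^n \circ \iota_n$, so submultiplicativity (Proposition \ref{prop:SubMulti}) together with $E^p_p[\iota_n] = 1$ (Proposition \ref{prop:EngEqualities}) yields $E^p_p[\phi_H^n] \le E^p_p[\rho] \cdot E^p_p[\phi_G^n]$, and taking $n$-th roots gives the inequality.

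For the reverse inequality, the hyperbolic-type hypothesis provides $N$ with $f^N(A) \subseteq P_f$. For $n \ge N$, the extra punctures $f^{-n}(A \setminus P_f)$ do not meet $A$ (otherwise $y \in A$ would force $f^n(y) \in f^n(A) \subseteq P_f$, contradicting $f^n(y) \in A \setminus P_f$), so the loops in $G_n$ around these punctures become null-homotopic in $S^2 \setminus A$ and $\phi_G^n$ kills the free factor $F$ in $\pi_1(G_n) \cong \pi_1(H_n) * F$. Since $G_0$ is a $K(\pi,1)$, this gives $\phi_G^n \sim \phi_G^n \circ \iota_n \circ r_n$, where $r_n : G_n \to H_n$ collapses each extra loop and its attaching arc to the attachment point; because $r_n$ is locally a collapse onto $H_n$, $E^p_p[r_n] \le 1$ and hence $E^p_p[\phi_G^n] \le E^p_p[\phi_G^n \circ \iota_n]$. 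It then remains to bound $E^p_p[\phi_G^n \circ \iota_n]$ by a fixed multiple of $E^p_p[\phi_H^n]$: the two maps $\phi_G^n \circ \iota_n$ and $\iota_0 \circ \phi_H^n$ from $H_n$ into $G_0$ agree on loops around $P_f$ or $f^{-n}(P_f) \setminus A$, and differ only on the fixed number $|A \setminus P_f|$ of loops in $H_n$ around the extra marked points, which the former sends into the extra loops of $G_0$ and the latter kills. Using Proposition \ref{prop:PPEFrac1P} to express $E^p_p$ as a supremum of $E^1_p$ ratios over taut weighted test maps, this bounded-rank discrepancy contributes only a constant, giving $E^p_p[\phi_G^n \circ \iota_n] \le C \cdot E^p_p[\phi_H^n]$ with $C$ independent of $n$. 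This final step is the principal obstacle: because $G_0$ and $H_0$ have different $\pi_1$-ranks, $\Gcal$ and $\Hcal$ are not homotopy equivalent as virtual endomorphisms and Proposition \ref{prop:HomoInvConfEng} cannot be invoked directly, so one must decompose arbitrary taut weighted test maps into a ``$P_f$-part'' controlled by $\phi_H^n$ and a uniformly bounded ``$A \setminus P_f$-part'', and verify that the contribution of the latter is $O(1)$ in the conformal energy.
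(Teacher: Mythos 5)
Your easy direction is essentially the paper's: with $P_f\subset A$ one can take spines so that $H_n\subset G_n$, and Theorem~\ref{thm:MonoGraph} gives $\overline{E}^p(\Hcal)\le\overline{E}^p(\Gcal)$ directly (your detour through the retraction $\rho$ is unnecessary but harmless, since the constant $E^p_p[\rho]$ vanishes after taking $n$-th roots).

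For the reverse inequality you have a genuine gap, and you admit it yourself: the claimed bound $E^p_p[\phi_G^n\circ\iota_n]\le C\cdot E^p_p[\phi_H^n]$ with $C$ independent of $n$ is precisely what needs to be proved, and decomposing taut test maps into a ``$P_f$-part'' and an ``$A\setminus P_f$-part'' is not carried out. The difficulty you identify (different $\pi_1$-ranks) is real, and the paper sidesteps it entirely by using the shift construction. You already have the needed fact — the hyperbolic-type hypothesis gives $f^k(A)\subset P_f$, i.e.\ $A\subset f^{-k}(P_f)$ — but you do not exploit it correctly. The point is that $H_k=f^{-k}(H_0)$ is a spine of $(S^2,f^{-k}(P_f))$, and since $A\subset f^{-k}(P_f)$ one may choose $G_0$ to be a subgraph of $H_k$ (and hence $G_n\subset H_{n+k}$ for all $n$). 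This gives an embedding of $\Gcal$ into the $k$-th shift $\Hcal_{[k]}$, which is a genuine homotopy morphism at the shifted level, so Theorem~\ref{thm:MonoGraph} applies with no $\pi_1$-rank obstruction and yields $\overline{E}^p(\Gcal)\le\overline{E}^p(\Hcal_{[k]})$. The shift-invariance of the asymptotic energy then gives $\overline{E}^p(\Hcal_{[k]})=\overline{E}^p(\Hcal)$, completing the proof without any need for uniform constants or a decomposition of test maps.
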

\begin{proof}
Since $P_f \subset A$, we can take $\Gcal$ and $\Hcal$ in such a way that, for every $n\ge0$, $H_n\subset G_n$. It follows from Theorem \ref{thm:MonoGraph} that $\overline{E}^p(\Gcal) \ge \overline{E}^p(\Hcal)$.

Since $A$ is a set of Fatou points, there exists an integer $k\ge0$ such that $P_f \subset A \subset f^{-k}(P_f)$. Then $\Gcal$ and $\Hcal$ can be chosen so that there is an embedding of $G_0$ into $H_k$. This yields a morphism from $\Gcal$ to the $k^{th}$-shift $\Hcal_{[k]}$ of $\Hcal$. Then by Proposition \ref{prop:Energy Inv under Shift} and Theorem \ref{thm:MonoGraph} we have
\[
\overline{E}^p(\Gcal) \le \overline{E}^p(\Hcal_{[k]}) = \overline{E}^p(\Hcal). \qedhere
\]
\end{proof}



If $f$ is a \pcf hyperbolic rational map, then the self-covering $f\from \Jcal_f \righttoleftarrow$ is topologically conjugate to the self-covering $f\from \Jcal_{\Gcal_f}\righttoleftarrow$. Moreover, the Ahlfors-regular gauge of $\Jcal_{\Gcal_f}$ contains the pull-back of the spherical metric on $\Jcal_f$ through the topological conjugacy \cite[Sections 3--4]{HP_CXC}, \cite[Section 3]{PilThu_ARConfDim}.

\subsection{Monotonicity of conformal energies: Branched coverings}\label{sec:MonoBrCov}$ $

\vspace{5pt}
\paragraph{\bf{Decomposition along multicurves}} Suppose that $\TmapA$ is a \pcf branched covering. Let $\Gamma$ be a multicurve of $(S^2,A)$ that is completely $f$-invariant up to isotopy, i.e., the collections of the homotopy classes of essential components appearing in $f^{-1}(\Gamma)$ and in $\Gamma$ are the same. We split the sphere into small spheres by pinching it along $\Gamma$. More precisely, we remove $\Gamma$ from $S^2$, compactify each connected component $X$ with boundary circles, and then we 
 collapse each boundary circle to a point. We refer to the compactified spheres as the {\it small spheres} of the $\Gamma$-decomposition.

The dynamics $\TmapA$ descends to the dynamics on the union of small spheres as follows: Let $X$ be a connected component of $S^2 \setminus \Gamma$ and $\hat{X}$ be the sphere that is the compactification of $X$. There exists a unique connected component $Y'$ of $S^2\setminus f^{-1}(\Gamma)$ such that $X$ and $Y'$ are homotopic relative to $A$. Let $Y$ be a connected component of $S^2 \setminus \Gamma$ such that $Y=f(Y')$ and $\hat{Y}$ be the sphere that is the compactification of $Y$. Using the map $f|_{Y'}\from Y'\to Y$ and the homotopy between $X$ and $Y'$, we have a branched covering $\hat{X}\to \hat{Y}$. In this way, we have an induced map among the small spheres. 

Let $F$ be the induced map among the small spheres of the decomposition by $\Gamma$. A small sphere $S$ is {\it periodic} if $F^k(S)=S$ for some $k>0$. Let $\{S^2(i)~|~i=1,2,\dots,n\}$ be the set of periodic small spheres. The {\it first return time $\tau_i$} of a periodic small sphere $S^2(i)$ is the smallest positive integer $k$ satisfying $F^k(S^2(i))=S^2(i)$. For each $i$, we define the {\it first return map $f_i$} of $S^2(i)$ by
\[
    f_i:=F^{\tau_i}|_{S^2(i)}\from S^2(i)\righttoleftarrow.
\]

To each periodic small sphere $S^2(i)$, we assign a set of marked points $A(i)$ as follows: Let $U(i)$ be the connected component of $S^2\setminus \Gamma$ such that $\overline{U(i)}$ is the compactification of $U(i)$ with boundary circles and there is a quotient map $q_i\from \overline{U(i)}\to S^2(i)=\overline{U(i)}/\sim$ which collapses every boundary circle to a point. The set $A(i)\subset S^2(i)$ is defined as the union of $q_i(A \cap U(i))$ and $q_i(\partial \overline{U(i)})$. Thus every point in $A(i)$ comes from $A$ or $\Gamma$. It is easy to show that $f_i$ is post-critically finite, $f_i(A(i)) \subset A(i)$, and $P_{f_i}\subset A(i)$. Hence, $f_i\from (S^2(i),A(i))\righttoleftarrow$ is also a \pcf branched covering, and we call it a {\it small branched covering of the $\Gamma$-decomposition of $f$}.

If a small branched covering $f_i \from (S^2(i),A(i))\righttoleftarrow$ has a Thurston obstruction $\Gamma_i$, then $\Gamma_i$ lifts to a Thurston obstruction of $\TmapA$. It follows that if $f\from (\hCbb,A)\righttoleftarrow$ is a \pcf rational map, then its small branched coverings are combinatorially equivalent to rational maps. In this case, after taking a homotopy and giving an appropriate complex structure on $S^2(i)$, we may assume that the small branched coverings are \pcf rational maps $f_i\from (\hCbb(i),A(i))\righttoleftarrow$, which we refer to as {\it small rational maps} of the $\Gamma$-decomposition of $f$.

We refer the reader to \cite{Pilgrim_Combination} for details of the decomposition of branched coverings.

\begin{lem}\label{lem:SmallMapHyp}
Let $\TmapA$ be a \pcf branched covering of hyperbolic-type. Suppose $\Gamma$ is a multicurve which is completely $f$-invariant up to isotopy such that $\Gamma$ does not contain any Levy cycles. Then, every small branched covering $f_i\from (S^2(i),A(i))\righttoleftarrow$ of the $\Gamma$-decomposition of $f$ is also of hyperbolic-type.
\end{lem}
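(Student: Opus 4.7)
The plan is to verify that every point of $A(i) = q_i(A \cap U(i)) \cup q_i(\partial \overline{U(i)})$ is a Fatou point for $f_i$. The interior marked points inherit their orbit structure from $A$, which is already Fatou for $f$; the boundary pinch points come from $\Gamma$, and this is where the hypothesis of no Levy cycle enters.

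For an interior point $q_i(a)$ with $a \in A \cap U(i)$, choose $N \ge 0$ with $c := f^N(a)$ a periodic critical point of $f$ of $f$-period $p$. Since $a \in U(i)$ and $F^{\tau_i}(S^2(i)) = S^2(i)$, the iterates $f^{k\tau_i}(a)$ all lie in $U(i)$, and for $k$ sufficiently large they lie on the $f$-orbit of $c$. The resulting periodic $f_i$-cycle in $U(i)$ is a sub-orbit of the $f$-cycle $\{c, f(c), \ldots, f^{p-1}(c)\}$ sampled with step size $\tau_i$; at least one point $c'$ of this $f_i$-cycle has the property that its length-$\tau_i$ $f$-orbit passes through $c$, so that
\[
\deg_{c'}(f_i) \;=\; \prod_{j=0}^{\tau_i - 1} \deg_{f^j(c')}(f) \;\ge\; \deg_c(f) \;\ge\; 2.
\]
Thus $q_i(c')$ is a periodic critical point of $f_i$ that the orbit of $q_i(a)$ eventually reaches, and $q_i(a)$ is Fatou.

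For a boundary pinch point $p$, the forward $f_i$-orbit eventually enters a periodic cycle $p_1 \to \cdots \to p_m \to p_1$ among the finitely many boundary pinches of $S^2(i)$, corresponding to curves $\gamma_1, \ldots, \gamma_m \in \Gamma$ that bound $U(i)$. It suffices to show this cycle contains a critical point of $f_i$. Assume for contradiction that $\deg_{p_k}(f_i) = 1$ for every $k$. Each transition $p_k \to p_{k+1}$ under $f_i$ lifts to a component $\tilde\gamma_k$ of $f^{-\tau_i}(\gamma_{k+1})$ isotopic to $\gamma_k$ with $\deg(f^{\tau_i}|_{\tilde\gamma_k}) = 1$. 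By complete $f$-invariance of $\Gamma$, each intermediate image $f^j(\tilde\gamma_k)$ for $0 \le j < \tau_i$ is an essential component of $f^{-(\tau_i - j)}(\Gamma)$, hence isotopic to an element of $\Gamma$; and since the total degree $1$ is a product of $\tau_i$ single-step degrees $\ge 1$, every single step has degree $1$. Concatenating over $k = 1, \ldots, m$ produces a cyclic sequence of $m\tau_i$ curves in $\Gamma$ in which every successive step is a degree-$1$ $f$-preimage, i.e., a Levy cycle contained in $\Gamma$. This contradicts the hypothesis, so some $p_k$ is critical for $f_i$ and $p$ is Fatou.

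The main subtlety lies in the boundary step: correctly identifying the lift $\tilde\gamma_k$ that computes $\deg_{p_k}(f_i)$, namely the component of $f^{-\tau_i}(\gamma_{k+1})$ that sits on the boundary of the connected component of $S^2 \setminus f^{-\tau_i}(\Gamma)$ homotopic to $U(i)$ and is therefore isotopic to $\gamma_k$, and verifying that each intermediate curve $f^j(\tilde\gamma_k)$ is essential so that it legitimately supplies an element of $\Gamma$ for the concatenation. Once this bookkeeping is in place, the product-of-degrees argument hands over the Levy cycle and the contradiction follows.
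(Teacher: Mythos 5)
Your proof is correct and follows the same two-case strategy as the paper (interior marked points coming from $A \cap U(i)$, and boundary pinch points coming from $\Gamma$), simply filling in details the paper leaves implicit: the local-degree chain rule showing that the $f_i$-cycle through a subsampled $f$-critical cycle is still critical, and the concatenation of degree-one lifts producing a Levy cycle in $\Gamma$ when the pinch-point cycle is assumed non-critical.
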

\begin{proof}
Let $f_i\from (S^2(i),A(i))\righttoleftarrow$ be a small branched covering of $(f,\Gamma)$. Let $p\in A(i)$. If $p$ comes from $U(i)\cap A$, then $p$ is a Fatou point because every point in $A$ is a Fatou point of $\TmapA$. Assume $p$ is the quotient of an element of $\Gamma$. If $p$ is periodic under $f_i$, then the periodic cycle containing $p$ is a critical cycle because $\Gamma$ does not contain any Levy cycles. Hence every element of $A(i)$ is a Fatou point of $f_i\from (S^2(i),A(i))\righttoleftarrow$.
\end{proof}


\begin{prop}\label{prop:SmallBrEmb}
Let $f\from (S^2,A)\righttoleftarrow$ be a \pcf hyperbolic-type branched covering. Suppose $\Gamma$ is a multicurve of $(S^2,A)$ that is completely $f$-invariant up to isotopy such that $\Gamma$ does not contain any Levy cycles. Let $f_i\from (S^2,A(i))\righttoleftarrow$ be a small branched covering of the $\Gamma$-decomposition of $f$ with the first return time $\tau_i$. Then, for any $p\in [1,\infty]$, we have
\[
	\left(\overline{E}^p(f_i)\right)^{1/\tau_i}\le \overline{E}^p(f).
\]
\end{prop}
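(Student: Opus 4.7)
The plan is to reduce the claim to an application of the monotonicity of conformal energies for graph virtual endomorphisms, Theorem \ref{thm:MonoGraph}. Since the $\tau_i$-th power $\Gcal^{\tau_i}$ of a graph virtual endomorphism is a virtual endomorphism of $f^{\tau_i}$, Proposition \ref{prop:ConfEngPower} gives $\overline{E}^p(f^{\tau_i})=\overline{E}^p(f)^{\tau_i}$. The desired inequality $\overline{E}^p(f_i)^{1/\tau_i}\le \overline{E}^p(f)$ is therefore equivalent to $\overline{E}^p(f_i)\le \overline{E}^p(f^{\tau_i})$, and it suffices to produce a homotopy morphism $\theta:\Gcal_{f_i}\to \Gcal_{f^{\tau_i}}$ whose $0$-th component is an embedding. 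Note that by Lemma \ref{lem:SmallMapHyp} the Levy-freeness of $\Gamma$ guarantees that $f_i$ is of hyperbolic-type, so $\overline{E}^p(f_i)$ is defined through a graph virtual endomorphism in the standard way.

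For the construction, let $U(i)\subset S^2\setminus\Gamma$ be the component whose compactification is $S^2(i)$, and let $q_i:\overline{U(i)}\to S^2(i)$ be the quotient collapsing each boundary circle to a point of $A(i)$. The restriction of $q_i$ gives a homeomorphism $U(i)\setminus A \cong S^2(i)\setminus A(i)$, so any spine $H_0\subset S^2(i)\setminus A(i)$ lifts via $q_i^{-1}$ to an embedded graph $\theta_0(H_0)\subset U(i)\setminus A$, which we extend to a spine $G_0$ of $(S^2,A)$. By complete $f$-invariance of $\Gamma$, there is a unique component $U^{\tau_i}(i)$ of $S^2\setminus f^{-\tau_i}(\Gamma)$ that is isotopic to $U(i)$ relative to $A$, and by the definition of the small branched covering the first return map $f_i$ is the descent to $S^2(i)$ of the branched covering $f^{\tau_i}:U^{\tau_i}(i)\to U(i)$. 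The analogous quotient of $\overline{U^{\tau_i}(i)}$ then identifies $H_1:=f_i^{-1}(H_0)$ with a subgraph of $U^{\tau_i}(i)\setminus f^{-\tau_i}(A)$ which sits inside $f^{-\tau_i}(G_0)=G_{\tau_i}$, giving the embedding $\theta_1:H_1\hookrightarrow G_{\tau_i}$.

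The morphism axioms are then straightforward to verify. The covering compatibility $\theta_0\circ f_i=f^{\tau_i}\circ \theta_1$ holds pointwise since $f_i$ is defined to be the descent of $f^{\tau_i}|_{U^{\tau_i}(i)}$ and $\theta_0,\theta_1$ are inverses of the respective quotient maps. The homotopy compatibility $\theta_0\circ \phi_{f_i}\sim \phi_{f^{\tau_i}}\circ \theta_1$ follows because both sides represent the homotopy class of the inclusion $\theta_1(H_1)\subset S^2\setminus f^{-\tau_i}(A)\hookrightarrow S^2\setminus A$ into $G_0$, and the isotopy from $U^{\tau_i}(i)$ to $U(i)$ relative to $A$ provides an explicit homotopy between the two compositions. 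Then Theorem \ref{thm:MonoGraph} gives $\overline{E}^p(f_i)\le \overline{E}^p(f^{\tau_i})=\overline{E}^p(f)^{\tau_i}$, finishing the proof. The main obstacle lies in verifying the unique isotopy class of $U^{\tau_i}(i)$ and the homotopy compatibility above; both are direct consequences of the completeness of $f$-invariance together with Levy-freeness of $\Gamma$, so in the end the argument reduces to a careful unpacking of the definitions once the correct embedding is identified.
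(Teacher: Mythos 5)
Your proposal is correct and follows essentially the same route as the paper's proof: take a spine $H_0$ of $(S^2(i),A(i))$, lift it via the quotient $q_i$ to a subgraph of $U(i)\setminus A$, extend to a spine $G_0$ of $(S^2,A)$ to get an embedding $\iota_0:H_0\hookrightarrow G_0$ giving a homotopy morphism $\Hcal\to\Gcal^{\tau_i}$, and conclude by Theorem~\ref{thm:MonoGraph} and Proposition~\ref{prop:ConfEngPower}. You spell out the verification of the morphism axioms and the role of Lemma~\ref{lem:SmallMapHyp} in more detail than the paper does, but these are fills of the same argument rather than a different approach.
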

\begin{proof}
	Let us take a spine $H_0$ of $(S^2(i),A(i))$. For the corresponding component $U(i)$ of $S^2\setminus \Gamma$, $H_0$ lifts to a spine of $U(i)$, which we also denote by $H_0$. We extend $H_0$ to a spine $G_0$ of $(S^2,A)$ so that there is an embedding $\iota_0\from H_0 \to G_0$. Then we have graph virtual endomorphisms $\Gcal=(G_0,G_1,f,\phi)$ of $\TmapA$ and $\Hcal=(H_0,H_1,f_i,\phi_i)$ of $f_i\from (S^2(i),A(i))\righttoleftarrow$ such that $\iota\from \Hcal \to \Gcal^{\tau_i}$ is a morphism with $\iota_0\from H_0 \to G_0$ being an embedding. Then the inequality follows from Proposition \ref{prop:ConfEngPower} and Theorem \ref{thm:MonoGraph}.
\end{proof}

\subsection*{Virtual endomorphism of stable multicurves.} Let $\TmapA$ be a hyperbolic-type \pcf branched covering and $\Gamma$ be an $f$-stable possibly peripheral multicurve of $(S^2,A)$. i.e., we allow a component of $\Gamma$ to be peripheral to $A$. Denote $\Gamma_0=\Gamma$ and let $\Gamma_1$ be the collection of connected components of $f^{-1}(\Gamma)$ that are essential or peripheral relative to $A$. The restriction $f_\Gamma:=f|_{\Gamma_1}\from \Gamma_1\to \Gamma_0$ is a covering map on each connected component. Since $\Gamma$ is $f$-stable, every connected component $\gamma'$ of $\Gamma_1$ is isotopic relative to $A$ to a connected component $\gamma$ of $\Gamma_0$. Define $\phi_\Gamma\from \Gamma_1\to \Gamma_0$ in such a way that the $\gamma'$ is mapped to $\gamma$ through the isotopy. Then the quadruple $\Gcal_\Gamma=(\Gamma_0,\Gamma_1,f_\Gamma,\phi_\Gamma)$ is the graph virtual endomorphism of $\Gamma$. This is the only graph virtual endomorphism in this article whose graphs are disconnected.

Recall the $p$-linear transformation $f_{p,\Gamma}$ and its leading eigenvalue $\lambda_{p,\Gamma}$ in Section \ref{sec:Obstructions}.

\begin{prop}\label{prop:EngOfMultiCurve}
Let $\TmapA$ be a \pcf hyperbolic-type branched covering. Let $\Gamma$ be an $f$-stable possibly peripheral multicurve and $\Gcal_\Gamma=(\Gamma_0,\Gamma_1,f_\Gamma,\phi_\Gamma)$ be its graph virtual endomorphism. Then, for every $p \in [1,\infty]$, we have
\[
    \overline{E}^p(\Gcal_\Gamma)=(\lambda_p(\Gamma))^{1/p},
\]
where $\lambda_p(\Gamma)$ is the maximal eigenvalue of the $p$-transformation of $\Gamma$.
\end{prop}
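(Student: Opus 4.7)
The plan is to fix a $p$-length on $\Gamma_0 = \bigsqcup_{j=1}^k \gamma_j$ assigning length $\ell_j$ to $\gamma_j$, compute $E^p_p[\phi^n_0]$ explicitly in terms of the matrix $B$ representing $f_{p,\Gamma}$ (so $B_{ji}=\sum_{\gamma'\sim\gamma_j,\ f(\gamma')=\gamma_i}\deg(f|_{\gamma'})^{1-p}$), and then apply Gelfand's spectral-radius formula to read off $\overline{E}^p(\Gcal_\Gamma)$ as a power of $\lambda_p(\Gamma)$. The key structural observation is that although $\pi^n_0\colon \Gamma_n\to \Gamma_0$ is a covering of positive degree on each connected component, $\phi_\Gamma$ is defined via isotopies and so has degree $\pm 1$ on each component; by the homotopy lifting property the same is true of every $\phi^n_0$. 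A connected component $\tilde\gamma$ of $\Gamma_n$ is encoded by a sequence $(\gamma'_1,\dots,\gamma'_n)$ in $\Gamma_1$ with $\phi_\Gamma(\gamma'_r)=\pi_\Gamma(\gamma'_{r+1})$ for $r=1,\dots,n-1$; its lifted $p$-length equals $d\,\ell_{i_1}$ with $d=\prod_{r=1}^n \deg(f|_{\gamma'_r})$ and $\gamma_{i_1}=f(\gamma'_1)$, while $\phi^n_0|_{\tilde\gamma}$ is a degree-$\pm 1$ map onto $\gamma_{j_n} \sim \gamma'_n$. Because the components of $\Gamma_n$ are disjoint and can each be homotoped independently, the $E^p_p$-minimizer of $\phi^n_0$ is obtained by taking the uniform stretch of slope $\ell_{j_n}/(d\,\ell_{i_1})$ on every component.

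Summing $|\phi'|^{p-1}$ over the preimages of $y\in\gamma_j$, grouping the sequences by $i_1$, and recognizing the iterated sum as a matrix product, I obtain
\[
\Fill^p_p(\phi^n_0)(y)=\ell_j^{p-1}\sum_{i=1}^k (B^n)_{ji}\,\ell_i^{1-p}=\alpha_j^{-1}(B^n\alpha)_j,
\]
where $\alpha := (\ell_i^{1-p})_{i=1}^k$; this is verified by induction on $n$, the base case $n=1$ being a direct unpacking of the definition of $B$. Therefore $E^p_p[\phi^n_0]^p=\max_j \alpha_j^{-1}(B^n\alpha)_j$. Since $\alpha$ is strictly positive and $B\ge 0$, Perron--Frobenius applied blockwise to the upper block-triangular form of $B$ recalled in Section~\ref{sec:Obstructions} gives $\|B^n\alpha\|_\infty^{1/n}\to \lambda_p(\Gamma)$, and the factor $\alpha_j^{-1}$ is bounded uniformly in $n$, so
\[
\overline{E}^p(\Gcal_\Gamma)=\lim_n E^p_p[\phi^n_0]^{1/n}=\lambda_p(\Gamma)^{1/p}.
\]

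The main technical point deserving care is that the per-component uniform stretch is a genuine $E^p_p$-minimizer rather than merely an upper bound. For a degree-$\pm 1$ map $\psi\colon S^1_L\to S^1_\ell$ the sharp inequality $\esssup |\psi'|\ge L^{-1}\int|\psi'|\ge \ell/L$ is realized by uniform stretch, and for such maps $\Fill^p_p$ collapses to $|\psi'|^{p-1}$ at generic points; since the $E^p_p$ of a disjoint union of circles is the coordinate-wise maximum of the essential suprema on the pieces, per-component optimality implies global optimality. The endpoint $p=1$ is handled by the analogous computation with weight functions in place of lengths (all $\deg^0=1$, so $B$ becomes the Perron--Frobenius count whose spectral radius is $\lambda_1(\Gamma)$), and the endpoint $p=\infty$ follows either by passing to the limit in $p$ or by a direct ess.sup argument in which the pointwise maximum replaces the $p$-th-power sum over preimages.
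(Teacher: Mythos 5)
Your proof is correct and follows essentially the same route as the paper: compute $\Fill^p_p(\phi^n_0)$ on each $\gamma_j$ as a (weighted) row sum of the $n$-th power of the $p$-transformation matrix and extract the growth rate via Perron--Frobenius; the paper first normalizes all circles to unit length whereas you carry general lengths through the vector $\alpha$, which washes out in the limit. You additionally supply the Jensen-inequality justification that the per-component constant-derivative representative is a genuine $E^p_p$-minimizer (so that $E^p_p[\phi^n_0]$ and not merely $E^p_p(\phi^n_0)$ is computed), a step the paper states as an assumption without comment; this is a welcome clarification but not a different method.
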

\begin{proof}
Since $\overline{E}^p$ is independent of the length of edges, we may assume that all the circles in $\Gamma$ have the same length. Assume that $\phi_\Gamma$ has constant derivative on every circle.

Let $\Gamma=\{\gamma_1,\gamma_2,\dots,\gamma_n\}$ and consider it as an ordered basis of $\Rbb^\Gamma$. Let $A_{p,\Gamma}$ be the matrix representative of the $p$-transformation $f_{p,\Gamma}$ with respect to this ordered basis. For every $k>0$ and $x\in \gamma_i$, the $\Fill^p_p((\phi_\Gamma)^k_0\from \Gamma_k\to \Gamma_0)(x)$ is equal to the $i^{th}$-row sum of $(A_{p,\Gamma})^k$. It follows that $E^p_p[(\phi_\Gamma)^k_0]$ is the $p^{th}$-root of the maximal row sum of $(A_{p,\Gamma})^k$. Then the conclusion follows from the fact that the maximal row sum of $(A_{p,\Gamma})^k$ grows exponentially fast with $k$ with base $\lambda_p(\Gamma)$. 
\end{proof}

We note that the definitions of decomposition along $\Gamma$ and small branched coverings still work even if $\Gamma$ has a peripheral component. Recall that if $\Gamma$ does not have any Levy cycles, then $\lambda_p(\Gamma)$ strictly decreases about $p$ so that the critical exponent $Q(\Gamma)$ is determined by $\lambda_{Q(\Gamma)}(\Gamma)=1$.

\begin{thm}[Monotonicity of conformal energies]\label{thm:MonoConfEngDim}
Let $\TmapA$ be a \pcf hyperbolic-type branched covering without any Levy cycles. Let $\Gamma$ be a multicurve which possibly contains a simple closed curve peripheral to $A$. Suppose that $\Gamma$ is completely $f$-invariant up to isotopy such that
\[
    \{f_i\from (S^2(i),A(i))\righttoleftarrow:i=1,2,\dots,n\}
\]
is the set of small branched coverings of the $\Gamma$-decomposition with the first return times $\{\tau_i\}_{i=1,2,\dots,n}$. Then, for every $p\in[0,\infty]$, we have
\[
    \overline{E}^p(f) \ge \max\left\{ \left(\overline{E}^p(f_i)\right)^{1/\tau_i}\mid 1\le i \le n\right\} ~{\rm and}~(\lambda_p(\Gamma))^{1/p}.
\]
Moreover, we have $\ARC(\Jcal_f) \ge Q(\Gamma)$. 
\end{thm}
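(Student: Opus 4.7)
The plan is to establish the two lower bounds separately and then deduce the Ahlfors-regular dimension statement from the second one. The first inequality $(\overline{E}^p(f_i))^{1/\tau_i} \le \overline{E}^p(f)$ is already in hand: Proposition \ref{prop:SmallBrEmb} gives precisely this estimate for every periodic small branched covering $f_i$. So the only substantive step is to show $(\lambda_p(\Gamma))^{1/p} \le \overline{E}^p(f)$.

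For this I would construct a homotopy morphism from the virtual endomorphism $\Gcal_\Gamma = (\Gamma_0, \Gamma_1, f_\Gamma, \phi_\Gamma)$ of the multicurve into the graph virtual endomorphism $\Gcal_f = (G_0, G_1, f, \phi)$ of $f$, and then invoke Theorem \ref{thm:MonoGraph}. The critical flexibility is that a spine $G_0$ of $(S^2, A)$ can always be chosen to contain $\Gamma$ as a subgraph, since the components of $\Gamma$ lie in $S^2 \setminus A$ and are pairwise disjoint and non-isotopic. With such a $G_0$, the inclusion $\iota_0 : \Gamma_0 \hookrightarrow G_0$ is an embedding, and since $G_1 = f^{-1}(G_0)$ contains $f^{-1}(\Gamma_0) \supset \Gamma_1$, so is $\iota_1 : \Gamma_1 \hookrightarrow G_1$. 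The square $\iota_0 \circ f_\Gamma = f \circ \iota_1$ commutes strictly, since $f_\Gamma$ is defined as the restriction of $f$. For the other square, each essential component $\gamma' \in \Gamma_1$ is isotopic in $S^2 \setminus A$ to $\phi_\Gamma(\gamma') \in \Gamma_0$ by the very definition of $\phi_\Gamma$; carrying this isotopy through the retraction defining $\phi$ exhibits the required homotopy $\phi \circ \iota_1 \sim \iota_0 \circ \phi_\Gamma$.

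Once the morphism is in place, Theorem \ref{thm:MonoGraph} gives $\overline{E}^p(\Gcal_\Gamma) \le \overline{E}^p(\Gcal_f) = \overline{E}^p(f)$, and Proposition \ref{prop:EngOfMultiCurve} identifies the left-hand side as $(\lambda_p(\Gamma))^{1/p}$, completing the second lower bound. For the final assertion, set $p_* := \ARC(\Jcal_f)$ and apply Theorem \ref{thm:PropertyOfComfEnergy}(4), so that $\overline{E}^{p_*}(f) = 1$; specializing the inequality just proved to $p = p_*$ forces $\lambda_{p_*}(\Gamma) \le 1$. Since $f$, and hence $\Gamma$, has no Levy cycle, the classification of $\lambda_p(\Gamma)$ recalled in Section \ref{sec:Obstructions} says $\lambda_p(\Gamma)$ is either identically zero (in which case $Q(\Gamma)$ is not defined and the claim is vacuous) or strictly decreasing in $p$ with unique crossing $\lambda_{Q(\Gamma)}(\Gamma) = 1$; thus $p_* \ge Q(\Gamma)$.

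The one step I expect to require real care is the homotopy-commutativity of the $\phi$ square: one must reconcile the retraction-based definition of $\phi$ with the isotopy-based definition of $\phi_\Gamma$, while accounting for the fact that $G_1$ contains inessential components of $f^{-1}(\Gamma)$ that are excluded from $\Gamma_1$. Since any such inessential component bounds a disk in $S^2 \setminus A$ and therefore retracts to a constant in $G_0$, it makes no contribution to the morphism; so the difficulty is a matter of bookkeeping rather than a substantive obstacle.
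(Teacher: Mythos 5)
Your argument is essentially the paper's intended one, written out more explicitly. The paper's own proof is very terse — it cites Proposition \ref{prop:SmallBrEmb} for the small-map terms and Proposition \ref{prop:EngOfMultiCurve} for the multicurve term, and the step that you make explicit (embedding $\Gcal_\Gamma$ into $\Gcal_f$ and invoking Theorem \ref{thm:MonoGraph}) is exactly what the paper leaves implicit; the paper's proof of Proposition \ref{prop:SmallBrEmb} uses precisely this technique, so you have reconstructed the intended mechanism. Your bookkeeping about inessential components of $f^{-1}(\Gamma)$ (they collapse in $G_0$ and make no contribution) is also correct and worth saying. The deduction of $\ARC(\Jcal_f) \ge Q(\Gamma)$ from the specialization $p = p_*$, via Theorem \ref{thm:PropertyOfComfEnergy}(4) and strict monotonicity of $\lambda_p(\Gamma)$, is the same as the paper's.

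One caveat you state too casually: ``a spine $G_0$ of $(S^2,A)$ can always be chosen to contain $\Gamma$ as a subgraph, since the components of $\Gamma$ are disjoint and non-isotopic.'' Disjointness and pairwise non-isotopy are not sufficient. A spine of $S^2\setminus A$ has first Betti number exactly $|A|-1$, so it can contain at most $|A|-1$ disjoint simple closed curves; a multicurve that is allowed (as in the theorem's hypotheses) to have peripheral components can have as many as $2|A|-3$ components, e.g.\ all $|A|$ peripheral loops, and then $\Gamma$ simply does not embed in any spine. For genuinely essential multicurves the count $|\Gamma|\le|A|-3<|A|-1$ rescues the construction, which covers the decomposition multicurve used in Corollary \ref{coro:NotCrochetCDim}, and one can further restrict to an irreducible sub-multicurve realizing $\lambda_p(\Gamma)$. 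But in the peripheral-permitting generality of the theorem statement the embedding is not automatic, and neither you nor the paper addresses it; this is a shared gap rather than one specific to your write-up, and it is worth flagging rather than asserting the embedding exists ``always.''
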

\begin{proof}
The inequality $\overline{E}^p(f) \ge \left( \overline{E}^p(f_i)\right)^{1/\tau_i}$ follows from Proposition \ref{prop:SmallBrEmb}. Let us define a spine $G_0$ of $(S^2,A)$ by extending $\Gamma$. Even when $\Gamma$ contains peripheral loops, since $\TmapA$ is of hyperbolic-type, $\Gamma$ can be extended to a spine. Then the inequality $\overline{E}^p(f) \ge \left( \lambda_p(\Gamma) \right)^{1/p}$ follows from Theorem \ref{thm:MonoGraph} and Proposition \ref{prop:EngOfMultiCurve}. Since $\Gamma$ does not contain any Levy cycles, $\lambda_p(\Gamma)$ is strictly decreasing. Then the last inequality follows from Theorem \ref{thm:ARCdim}. 
\end{proof}

\section{Extensions of invariant graphs and preorders on edge sets}\label{Sec:GraphExt and Preorder}

Suppose that $f$ is a crochet map, i.e., there is a connected $f$-invariant graph $G$ with $f(\V(G))\cup P_f \subset \V(G)$ and $h_{top}(f|_G)=0$. For the purpose of this article, we want $G$ to satisfy four additional properties (P1)--(P4), which are defined in Section \ref{sec:P1-P4}. To do this, we note that in \cite{DHS_Decomp}, the graph $G$ is constructed by using three methods (E1)--(E3) of extending $f$-invariant graphs, which will be discussed in Sections \ref{subsec:GraphExtensions} and \ref{subsec:EntZeroGraph_Crochet}. Therefore, in Theorem \ref{thm:TotalPreorderGraphs}, we show that the properties (P1)--(P4) are preserved by the three extension methods (E1)--(E3).

\subsection*{The vertex set $\V(G)$} The properties (P1)--(P4) depend not only on the underlying topology of a graph $G$ but also on its combinatorial structure, meaning its edges and vertices. Therefore we should carefully consider the vertex set $\V(G)$; there may be many vertices of degree $2$. For each point $x\in G$, we define the {\it degree} of $x$ in $G$, denoted by $\deg_G(x)$, to be the number of connected components in $U\setminus \{x\}$ for a small neighborhood $U$ of $x$ in $G$. For each point $x\in G$, we have $\deg_G(x)\ge0$, which is zero when the point $x$ itself is a connected component of $G$. We always assume that every point $x\in G$ with $\deg_G(x)\neq 2$ is contained in $\V(G)$.

We will only consider graphs $G$ with $P_f\subset \V(G)$, where a connected component of $G$ may be a single vertex.

\subsection{Preorder extension}\label{sec:PreorderExt}
The property (P4) is about a preorder extension on the edge set $\Edge(G)$, which is essential in our proof of Theorem \ref{theorem:main}. In this subsection, we prove Theorem \ref{thm:PreorderExt}, which is a version of extension of preorder relative to an equivalence relation.

A {\it binary relation} of a set $X$ is a subset of $X\times X$. For a binary relation $R$, we use a notation $xRy$ for an element $(x,y)\in R\subset X\times X$.

\begin{defn}[Preorder]\label{defn:preorder}
For a set $X$, a {\it preorder} is a binary relation $\preceq$ satisfying the following.
\begin{enumerate}[leftmargin=20pt]
    \item[] (Reflexivity) For every $x\in X$, we have $x\preceq x$.
    \item[] (Transitivity) For $x,y,z\in X$, if $x\preceq y$ and $y\preceq z$, then $x\preceq z$.
\end{enumerate}
If $x\preceq y$ and $y \preceq x$, we say that $x$ and $y$ are {\it equivalent} with respect to $\preceq$ and write $x \simeq y$ . If $x$ and $y$ are not equivalent, we write $x \nsimeq y$. We write $x\precneq y$ if $x\preceq y$ but not $y \preceq x$.
A preorder $\preceq$ is a {\it total preorder} if it additionally satisfies the following.
\begin{enumerate}[leftmargin=20pt]
    \item[] (Strong connectivity) For every $x,y\in X$, either $x\preceq y$ or, possibly non-exclusively, $y \preceq x$.
\end{enumerate}
\end{defn}

When we adorn $\preceq$ with ${~}^*$ and $\bar{~}$ to denote preorders relevant to $\preceq$, we apply the same adornments to the symbols $\precneq$ and $\simeq$. 

\begin{defn}[Preorder extension]\label{defn:PreorderExt}
    Let $X$ be a set. A preorder $\preceq^*$ on $X$ is an {\it extension} of a preorder $\preceq$ on $X$ if
    \begin{enumerate}
        \item $x\preceq y$ implies $x \preceq^* y$, and
        \item $x \precneq y$ implies $x \precneq^* y$.
    \end{enumerate}
    Suppose that $\sim$ is an equivalence relation on $X$. We say that an extension $\preceq^*$ of $\preceq$ is a {\it (preorder) extension of $\preceq$ relative to $\sim$} if $x\sim y$ implies $x\simeq^*y$.
\end{defn}

Let $X$ be a set. As a corollary of the linear extension theorem \cite{Szpilrajn_PartialOrderExt}, every preorder $\preceq$ on $X$ can be extended to a total preorder $\preceq^*$, as also proven in \cite[Lemma 3]{Hansson_PreOrderExt}.

In our setting, however, an equivalence relation $\sim$ is additionally given on $X$, and a total preorder extension of $\preceq$ relative to $\sim$ may not exist. In Theorem~\ref{thm:PreorderExt}, we characterize the conditions under which a preorder can be extended relative to an equivalence relation.

\begin{defn}[Virtually ordered sequence/cycle]\label{defn:VirtlyOrdSeqCyc}
Let $X$ be a finite set, $\preceq$ a preorder on $X$, and $\sim$ an equivalence relation on $X$.

\begin{itemize}
    \item An ordered finite sequence $\{ x_i\}_{i=1,2,\dots k}$ in $X$ with $k\ge2$ is called a {\it virtually ordered sequence (resp.\@ cycle) with respect to $(\preceq,\sim)$} if there exists a sequence $\{x_i'\}_{i=1,2,\dots,k}$ in $X$ such that $x_i'\sim x_i$ for every $i\in \{1,2,\dots,k\}$ and $x_i' \preceq x_{i+1}$ for every $i \in \{1,2,\dots,k-1\}$ (resp.\@ $i\in \{1,2,\dots, k \mod k\}$). The elements $x_i$ and $x_i'$ may be equal.
    \item We say that the virtually ordered cycle $\{x_i\}_{i=1,2,\dots,k}$ {\it virtually contains a pair of strict order $x \precneq y$} if there exists $x,y\in X$ with $x\sim x_i$, $y \sim x_j$ for some $i$ and $j$, possibly $i=j$, and $x \precneq y$.
\end{itemize}
\end{defn}

\begin{defn}[Induced preorders on quotient sets]\label{defn:PreorderQuotient}
    Let $X$ be a set. Suppose that $\sim$ is an equivalence relation on $X$ and $\overline{X}:=X/\sim$ is the associated quotient set, whose equivalence classes are written as $[x]$ for $x\in X$.
    \begin{enumerate}
        \item (Quotient) For a preorder $\preceq$ on $X$, we define a {\it quotient} preorder $\bar{\preceq}$ on $\overline{X}$ as the transitive closure of the relation $R$ given by $[x] R [y]$ if and only if $x' \preceq y'$ for some $x'\sim x$ and $y'\sim y$. If $X$ is finite, then one can easily show that $[x]\bar{\preceq}[y]$ if and only if there is a virtually ordered sequence $x_1,x_2,\dots,x_k$ with $x\sim x_1$ and $y\sim x_k$.
        \item (Lift) For a preorder $\bar{\preceq}$ on $\overline{X}$, we define its {\it lift} as the preorder $\preceq$ on $X$ defined by $x \preceq y$ if and only if $[x] \bar{\preceq} [y]$, which can be easily shown to be a preorder. 
    \end{enumerate}
\end{defn}

Consider a finite set $X$ with a preorder $\preceq$ and an equivalence relation~$\sim$. By using Definition \ref{defn:PreorderQuotient}, we can consider the quotient preorder $\bar{\preceq}$ on $\overline{X}:=X/\sim$, and then we can lift $\bar{\preceq}$ to a preorder $\preceq^*$ on $X$. It is easy to show that the preorder $\preceq^*$ is the smallest transitive (binary) relation containing both $\preceq$ and $\sim$, as subsets of $X \times X$. However, $\preceq^*$ may not be a preorder extension of $\preceq$ when it violates the condition (2) in Definition \ref{defn:PreorderExt}.

\begin{thm}[Preorder extension relative to an equivalence relation] \label{thm:PreorderExt}
    Let $X$ be a finite set with a preorder $\preceq$. Let $\sim$ be an equivalence relation on $X$ and $\overline{X}:=X/\sim$. The following are equivalent.
    \begin{enumerate}
        \item Every virtually ordered cycle with respect to $(\preceq,\sim)$ does not virtually contain any pairs of strict order.
        \item For the quotient preorder $\bar{\preceq}$ on $\overline{X}$ (Definition \ref{defn:PreorderQuotient}) and all $x,y\in X$, $x \precneq y$ implies $[x] \bar{\precneq} [y]$.
        \item There exists a total preorder extension $\preceq^*$ of $\preceq$ on $X$ such that $x \sim y$ implies $x\simeq^*y$.
    \end{enumerate}
\end{thm}
\begin{proof}
    $\sim$(2)$\Rightarrow$$\sim$(1) Consider $x,y\in X$ with $x\precneq y$. By definition we have $[x]\bar{\preceq}[y]$. Suppose $[x]\bar{\simeq}[y]$. Then, there is a virtually ordered cycle $x_1,x_2,\dots,x_k$ with $x_1\sim x$ and $x_l \sim y$ for some $l \in \{2,\dots,k\}$, which virtually contains a pair of strict order $x \precneq y$. This contradicts (1).

    (2)$\Rightarrow$(3) We extend the preorder $\bar{\preceq}$ on $\overline{X}$ to a total preorder $\bar{\preceq}^*$ by applying \cite[Lemma 3]{Hansson_PreOrderExt}, which is a corollary of \cite{Szpilrajn_PartialOrderExt}. We obtain the desired preorder extension $\preceq^*$ as the lift of $\bar{\preceq}^*$ (Definition \ref{defn:PreorderQuotient}).

    $\sim$(1)$\Rightarrow$$\sim$(3) Suppose that $x_1,x_2,\dots,x_k$ is a virtually ordered cycle that virtually contains a pair of strict order $x\precneq y$. For a contradiction, suppose that there exists a preorder extension $\preceq^*$ of $\preceq$ such that $x\sim y$ implies $x\simeq^*y$. Then, we obtain $x_1\simeq^*x_2\simeq^*\dots \simeq^*x_k$ such that $x\simeq^*y$. This contradicts the condition (2) in Definition \ref{defn:PreorderExt}. 
\end{proof}

\subsection{Properties (P1)--(P4)}\label{sec:P1-P4}
Let $f$ be a \pcf rational map. Suppose that $G$ is a finite $f$-invariant, possibly disconnected, graph with $P_f \cup f(\V(G)) \subset \V(G)$. We allow a single vertex as a connected component of $G$.

We adopt the terminologies used for finite subdivision rules, whose $1$-skeletons of the subdivision complexes are connected graphs. For example, Fatou and Julia vertices, and the sets of them $\V_F(G)$ and $\V_J(G)$, together with $FF$-, $FJ$-, and $JJ$-edges, and the sets $\Edge_{FF}(G)$, $\Edge_{FJ}(G)$, and $\Edge_{JJ}(G)$, are defined in the same way for any $f$-invariant graph $G$, even if it is disconnected. We also define the {\it level-$n$ subdivision graph} $G^n$ of $G$ by $\V(G^n)=f^{-n}(\V(G))\cap G$, and define level-$n$ subedges, edge types, and recurrent edges accordingly.

The properties (P1) and (P2) are defined as below.
\begin{enumerate}
    \item[(P1)] Every edge is an $FJ$-edge.
    \item[(P2)] For every $FJ$-edge $e$ with the Julia endpoint $v$, we have $v\notin f^n({\rm int}(e))$ for all $n\ge0$.
\end{enumerate}

\begin{lem}\label{lem:P1P2RecFF Edge}
    Consider an $f$-invariant graph $G$ with $P_f \cup f(\V(G))\subset \V(G)$ that satisfies (P1) and (P2). Suppose $e^0$ is a level-$0$ recurrent edge with the Julia vertex $v^0$. Then the following hold.
    \begin{enumerate}
        \item $v^0$ is a periodic point of $f$.
        \item For all $n>0$, the edge $e^0$ has a unique level-$n$ recurrent edge $e^n$, which is incident to $v^0$.
        \item $h_{top}(f|_G)=0$.
    \end{enumerate} 
\end{lem}
\begin{proof}
    Fix $n>0$. Suppose that $e^n$ is a level-$n$ recurrent subedge of $e^0$ whose Julia vertex is $v^n$. There exists a level-$m$ recurrent subedge $e^m$ of $e$ with $m>n$ such that $e^m\subset e^n$ and $e^m$ is of type $e$.

    We first show that $v^0=v^n$, which implies (2). Suppose $v^n \in {\rm int}(e)$. Since $e^m\subset e^n$, the Julia vertex $v^m$ of $e^m$ lies in ${\rm int}(e)$. It follows that $v=f^m(v^m)\in f^m({\rm int}(e))$, and this contradicts (P2). Since $v^n$ is a Julia vertex, and $e$ is an $FJ$-edge, it follows that $v^n=v^0$.

    The previous paragraph for level-$n$ also applies for level-$m$. Hence, $v^m=v^0$. Since $e^m$ is of type $e^0$, we have $f^m\from v^m(=v^0) \mapsto v^0$, which proves (1).

    By Proposition \ref{prop:entpysubexpedgesubdiv} and Lemma \ref{lem:UniqRecurBand}, (2) implies (3).

\end{proof}

\begin{defn}[Preorder and equivalence relation on $\Edge(G)$]\label{defn:PreorderEqrel}
Suppose $f$ is a \pcf rational map, and $G$ is an $f$-invariant, possibly disconnected, graph satisfying $P_f\cup f(\V(G))\subset \V(G)$ and (P1).
\begin{itemize}
    \item A preorder $\preceq$ on $\Edge(G)$ is defined in such a way that $e_1 \preceq e_2$ if and only if $e_2\subset f^n(e_1)$ for some $n\ge0$. Equivalently, $e_1 \preceq e_2$ if and only if there exists a directed path from $[e_1]$ to $[e_2]$ in the directed graph $\Ecal_\R$ of edge subdivisions.
    \item An equivalence relation $\sim$ on $\Edge(G)$ is defined by declaring $e_1 \sim e_2$ if and only if $e_1$ and $e_2$ have a common Julia vertex. 
\end{itemize}    
\end{defn}

The transitivity of $\sim$ follows from the absence of $JJ$-edges, which in turn is a consequence of (P1). Even if $JJ$-edges are present, one could instead consider the transitive closure of $\sim$, but we do not pursue it in this article.

Assume that $G$ satisfies (P1). Let $\preceq$ and $\sim$ be the preorder and equivalence relation on $\Edge(G)$ defined in Definition~\ref{defn:PreorderEqrel}. We now define the properties (P3) and (P4) for $f$-invariant graphs $G$.
\begin{enumerate}
     \item[(P3)]  Suppose $e_1,e_2,\dots,e_k$ is a virtually ordered cycle with respect to $(\preceq,\sim)$ on $\Edge(G)$. More precisely, suppose that for all $i \mod k$, there exists $e_i'$ with $e_i\sim e_i'$ such that $e_{i+1}\subset f^{n_i}(e_i')$ for some $n_i>0$. Let $v_i$ denote the Julia vertex of $e_i$. Then, $f^{n_i}(v_i)=v_{i+1}$, and thus $v_i$'s lie in the same periodic cycle.  
    \item [(P4)] There exists a total preorder extension of $\preceq$ relative to $\sim$.
\end{enumerate}

\begin{lem}\label{lem:(P2)(P3)Imply(P4)}
    (P4) follows from (P1),(P2), and (P3).
\end{lem}
\begin{proof}
    Suppose that an $f$-invariant graph $G$ satisfies (P1), (P2), and (P3). By (P1), every edge is an $FJ$-edges. Suppose $e_1,e_2,\dots,e_k$ is a virtually ordered cycle. We use the notations $e_i', v_i, n_i$ in the definition of (P3). By Theorem \ref{thm:PreorderExt}, it suffices to show that the cycle does not virtually contain any pair of strict order.
    
    Let $E_i$ be the set of edges incident to $v_i$. Define $E:=E_1\cup E_2\dots \cup E_k$.
    
    We first show the following: (a) For each $e_i''\in E_i$ and $n\ge0$, the forward image $f^n(e_i'')$ covers at most one edge in $E$ at most once, i.e., $e_i''$ contains at most one of the $f^n$-preimages of edges in $E$, and (b) if $e_j'' \subset f^n(e_i'')$ for $e_i''\in E_i$ and $e_j'' \in E_j$, then $f^n(v_i)=v_j$. In (b), we allow $E_i=E_j$, in which case we consider $e_i''$ and $e_j''$ as possibly the same edges in $E_i = E_j$.
    
    To prove (a), suppose that $f^n(e_i'')$ covers two edges of $E$ or one edge of $E$ twice. Since edges in $E$ are $FJ$-edges, there exists $v_j$ with $v_j \in f^n({\rm int}(e_i''))$. Since $v_i$ and $v_j$ are in the same periodic cycle by (P3), it follows that $v_i \in f^m({\rm int}(e_i''))$ for some $m>0$. This contradicts (P2). For (b), let us assume $e_j''\subset f^n(e_i'')$. By the argument in the proof of (a), $f^n({\rm int}(e_i''))$ does not contain $v_j$. Hence $f^n(v_i)=v_j$.

    Next, we suppose that $e_i'' \preceq e_j''$ for some $e_i''\in E_i$ and $e_j'' \in E_j$, and we will show that $e_j''\preceq e_i''$. Since $v_l$'s are periodic Julia vertices, they are not critical points of $f$. Hence, $f\from G\to G$ is locally homeomorphism near $v_l$'s. Then, there exists $N>0$ such that $e\subset f^N(e)$ for every $e\in E$ and $f^N(v_l)=v_l$ for all $l$. Since $e_i'' \preceq e_j''$, there exists $m>0$ such that $e_j''\subset f^m(e_i'')$. It follows from (b) that $f^m(v_i)=v_j$. We can assume $N>m$, by replacing $N$ with its larger multiple if necessary. Then $f^{N-m}(e''_j)\subset f^N(e_i'')$ and $f^{N-m}(v_j)=f^N(v_i)=v_i$. It follows that $f^{N-m}(e''_j)$ contains an edge $e_i'''$ in $E_i$, i.e., $e_j''\preceq e_i'''$, and so $e_i'''\subset f^N(e_i'')$. Note that $e_i''$ is contained in $f^N(e_i'')$, and hence, by (a), it is the only edge of $E$ contained in $f^N(e_i'')$. Therefore $e_i''=e_i'''$, and this proves $e_j'' \preceq e_i''$. 

    The previous paragraph demonstrates that the virtually ordered cycle $e_1,e_2,\dots,e_k$ cannot virtually contain any pair of strict order. Therefore, by Theorem \ref{thm:PreorderExt}, the graph $G$ satisfies (P4).
\end{proof}

\subsection{Modifications (E1), (E2), (E3), and (R) of invariant graphs}\label{subsec:GraphExtensions}
Let $f$ be a post-critically finite rational map. Suppose $G$ is an $f$-invariant graph with $P_f \cup f(\V(G))\subset \V(G)$. In this subsection, we discuss the following four methods of modifying $G$: (E1) Extension by preimages, (R) Restrictions to subgraphs, (E2) Extension by internal rays, and (E3) Extension by arcs almost in the preimages.

\subsection*{(E1) Extension by preimages} Let $n\ge1$. Since $G$ is $f$-invariant, we have $f^{-n}(G)\supset G$. We consider $f^{-n}(G)$ as a graph with $\V(f^{-n}(G))=f^{-n}(\V(G))$. Since $P_f \subset \V(G)$, every edge of $f^{-n}(G)$ is mapped homeomorphically onto an edge of $G$ by $f^n$. Define $G^{(n)}$ to be the union of the connected components of $f^{-n}(G)$ that contain some connected components of $G$. We refer to $G^{(n)}$ as the {\it level-$n$ extension of $G$ by preimages}, or the {\it level-$n$ extension of $G$ by (E1)}. We note that some connected components of $G^{(n)}$ may contain more than one connected component of $G$.

It is clear that if $G$ satisfies (P1),  then so does $G^{(n)}$ for all $n>0$.

\subsection*{(R) Restriction to subgraphs} Suppose that $H$ is an $f$-invariant subgraph of $G$ with $P_f \cup f(\V(H))\subset \V(H)$. We then call the replacement of $G$ with $H$ the {\it restriction to a subgraph}. In our definition, a subgraph $H$ of $G$ satisfies $\V(H)\subset \V(G)$ and $\Edge(H)\subset \Edge(G)$, such that the interior of each edge of $H$ does not contain any vertex of $G$.
\vspace{5pt}

For a periodic Fatou component $U$, its {\it internal ray} is an arc corresponding to a radius of $\mathbb{D}$ under the B\"ottcher coordinate. If $U$ is pre-periodic, its internal ray is a preimage of an internal ray of a periodic Fatou component.

\subsection*{(E2) Extension by internal rays} Let $A=\{e_1,e_2,\dots,e_k\}$ be a set of internal rays of Fatou components that is $f$-invariant, i.e., $f(e_i)\in A$ for every $e_i\in A$. We additionally assume the following condition, which we call the {\it E2-Vertex condition}.
\begin{enumerate}[leftmargin=55pt]
    \item[(E2-V)] For each endpoint $v$ of an edge $e_i\in A$, if $v\in G$, then $v\in \V(G)$.
\end{enumerate}

Define $H=G\cup \bigcup_{i=1}^k e_i$ and $\V(H)=\V(G)\cup \bigcup_i \partial e_i$.
By (E2-V), we have $\Edge(G)\subset\Edge(H)$, and thus $G$ is a subgraph of $H$. Every edge in $\Edge(H)\setminus \Edge(G)$ is an $FJ$-edge. We call $H$ the {\it extension of $G$ by (an $f$-invariant set $A$ of) internal rays}, or the {\it extension of $G$ by (E2)}.

\subsection*{(E3) Extension by arcs almost in the preimages} This modification is more involved than the previous three. An {\it arc} $\gamma$ in $\hCbb$ refers to an embedding $\gamma \colon [0,1] \to \hCbb$, or, depending on the context, to its embedded image.

\begin{defn}[Arcs almost in the preimages]\label{defn:ArcsAlmostPreimag}
    Suppose that $f$ is a \pcf rational map and $G$ is an $f$-invariant graph. We say that an arc $\gamma\from [0,1] \to \hCbb$ is {\it almost in the preimages of $G$} if $\gamma(1) \in G$, $\gamma((0,1)) \cap G =\emptyset$, and $[t_n,1]=\gamma^{-1}(f^{-n}(G))$ for any $n\ge0$ such that $t_n$ is monotonically decreasing to $0$ as $n \to \infty$. We call $\gamma(0)$ the {\it exceptional point} of $\gamma$, which may or may not lie in $\bigcup_{n\ge0}f^{-n}(G)$.
\end{defn}

\begin{figure}
    \centering
	\def\svgwidth{0.4\textwidth}
\begingroup%
  \makeatletter%
  \providecommand\color[2][]{%
    \errmessage{(Inkscape) Color is used for the text in Inkscape, but the package 'color.sty' is not loaded}%
    \renewcommand\color[2][]{}%
  }%
  \providecommand\transparent[1]{%
    \errmessage{(Inkscape) Transparency is used (non-zero) for the text in Inkscape, but the package 'transparent.sty' is not loaded}%
    \renewcommand\transparent[1]{}%
  }%
  \providecommand\rotatebox[2]{#2}%
  \newcommand*\fsize{\dimexpr\f@size pt\relax}%
  \newcommand*\lineheight[1]{\fontsize{\fsize}{#1\fsize}\selectfont}%
  \ifx\svgwidth\undefined%
    \setlength{\unitlength}{172.59455523bp}%
    \ifx\svgscale\undefined%
      \relax%
    \else%
      \setlength{\unitlength}{\unitlength * \real{\svgscale}}%
    \fi%
  \else%
    \setlength{\unitlength}{\svgwidth}%
  \fi%
  \global\let\svgwidth\undefined%
  \global\let\svgscale\undefined%
  \makeatother%
  \begin{picture}(1,0.49482392)%
    \lineheight{1}%
    \setlength\tabcolsep{0pt}%
    \put(0,0){\includegraphics[width=\unitlength,page=1]{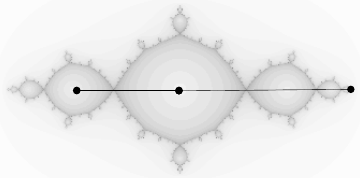}}%
    \put(0.31319123,0.18626839){\color[rgb]{0,0,0}\makebox(0,0)[lt]{\lineheight{1.25}\smash{\begin{tabular}[t]{l}$e$\end{tabular}}}}%
    \put(0.72753345,0.18627591){\color[rgb]{0,0,0}\makebox(0,0)[lt]{\lineheight{1.25}\smash{\begin{tabular}[t]{l}$e'$\end{tabular}}}}%
  \end{picture}%
\endgroup%

    \caption{The Basilica polynomial $f(z)=z^2-1$. The endpoints of the edge $e$ are the points $0$ and $-1$ in the critical $2$-cycle. Let $G$ be the graph consisting of a single edge $e$. The edge $e'$ joins $0$ with the $\beta$-fixed point $(1+\sqrt{5})/2$. Then, $e'$ is an arc almost in the preimages of $G$, so that $f(e')=e\cup e'$.}
    \label{fig:ArcAlmostinPreimages}
\end{figure}

\begin{lem}\label{lem:AlmostinPreimages} In the setting of Definition \ref{defn:ArcsAlmostPreimag}, the following hold.
\begin{enumerate}
    \item For all $t\in (0,1)$, we have $\gamma(t)\in \left(\bigcup_{n\ge0}f^{-n}(G)\right)\setminus G$.
    \item $\gamma^{-1}(G^{(n)})=\gamma^{-1}(f^{-n}(G))$ for all $n\ge0$.
    \item $\gamma(0)\in \Jcal_f$.
\end{enumerate}
\end{lem}
\begin{proof}
    Statements (1) and (2) are immediate from the definitions.
    Statement (3) follows from the fact that $\gamma(0)$ is an accumulation point of iterated preimages of $\V(G)$.
\end{proof}

\begin{defn}[Int-disjoint set of arcs almost in the preimages]\label{defn:CollectionArcsAlmostPreimag}
    Suppose that $f$ is a \pcf rational map and $G$ is an $f$-invariant graph. Consider a collection $A=\{\gamma_1, \gamma_2, \dots, \gamma_k\}$ of arcs almost in the preimages of $G$ and the set of its exceptional points $B:=\{\gamma_i(0):i\in\{1,2,\dots,k\}\}$. We say that $A$ is {\it int-disjoint} if ${\rm int}(\gamma_i)\cap {\rm int}(\gamma_j)$ for all $i\neq j$, and {\it $f$-invariant} if $f(B)\subset B$ and $f(H)\subset H$ where $H:=G \cup \bigcup_{i=1}^k \gamma_i$. For a technical reason, we additionally assume the following condition, which we refer to as the {\it E3-Vertex condition}.
    \begin{enumerate}[leftmargin=55pt]
        \item [(E3-V)] For each $v\in B$, if $v\in G$, then $v\in \V(G^{(m)})$ for any sufficiently large $m>0$.
    \end{enumerate}
\end{defn}


\begin{lem}\label{lem:CollectionAlmostinPreimages} Suppose that $\{\gamma_1,\gamma_2,\dots,\gamma_k\}$ is an $f$-invariant set of int-disjoint arcs almost in the preimages of $G$. For each $\gamma_i$, there exists a unique $\gamma_j$ such that $f(\gamma_i)\supset \gamma_j$. In this case, we have $f(\gamma_i(0))=\gamma_j(0)$.
\end{lem}
\begin{proof}
    One can easily find the arc $\gamma_j$ by examining the map $f$ near $\gamma_i(0)$ and $f(\gamma_i(0))$, which may be a branched covering, depending on whether $\gamma_i(0)$ is a critical point.
\end{proof}

\begin{eg}
An example of arcs almost contained in the preimages is provided by regulated paths, in the sense of \cite{Poi_HubTree}, within the filled Julia sets $\mathcal{K}_f$ of \pcf polynomials $f$ (see Figure~\ref{fig:ArcAlmostinPreimages}).

For non-polynomial rational maps, a similar but more intricate situation occurs, since the analogue of $\mathcal{K}_f$, called a cluster, need not be simply connected (see Figure~\ref{fig:RabbitandBasilica}). For further details on clusters, see Section~\ref{sec:crochet} or \cite{DHS_Decomp}.
\end{eg}

Suppose that $G$ satisfies (P1). We carefully define $\V(H)$ so that $H$ satisfies (P1). The vertex condition (E3-V) is used to include $B$ in $\V(H)$ with satisfying (P1).

Let $\{\gamma_1, \gamma_2, \dots, \gamma_k\}$ be an $f$-invariant set of int-disjoint arcs almost in the preimages of $G$. Choose $m>0$ sufficiently large such that for every $i\in\{1,2,\dots,k\}$, the following hold.
\begin{enumerate}
    \item $G^{(m)} \cap {\rm int}(\gamma_i)\neq \emptyset$.
    \item Either $\gamma_i(0)\in \V(G^{(m)})\cap G$ or $\gamma_i(0) \notin G$; see (E3-V).
\end{enumerate}
For each $i\in \{1,2,\dots,k\}$, let $x_i$ be the vertex in $\V(G^{(m)}) \cap {\rm int}(\gamma_i)$ that is closest to $\gamma_i(0)$. We define a set $V$ as the collection of $x_i$'s that are Julia vertices. Recall $B=\{\gamma_i(0):1\le i\le k\}$ is a set of Julia vertices (Lemma~\ref{lem:AlmostinPreimages}). We define the vertex set $\V(H)$ of $H:=G \cup \bigcup_{i=1}^k \gamma_i$ by
\[
    \V(H)= \left((\V(G^{(m)}) \cap H) \cup B\right)\setminus V. 
\]
Since $G^{(m)}\cap {\rm int}(\gamma_i)\neq \emptyset$, we have $\deg_H(x_i)=2$ for each $x_i$, even though possibly $\deg_{G^{(m)}}(x_i)>2$. Hence $x_i$ may be excluded from the vertex set of $H$.
We also remark that $\V(H)$ depends on the choice of $m>0$. It easily follows from the definition of $V$ that for all $x\in V$, if $f(y)=x$ for some $y\in H$, then $y\in V$ as well. Hence, $\V(H)$ is $f$-invariant, and $H$ satisfies (P1). We call this construction of $H$ the {\it extension of $G$ by (the $f$-invariant int-disjoint set  $\{\gamma_1, \gamma_2, \dots, \gamma_k\}$ of) arcs almost in the preimages of $G$}, or simply the {\it extension of $G$ by (E3)}.

Let us define an $f$-invariant subgraph $K$ of $H$ by removing, for every $i\in\{1,2,\dots,k\}$, the edge $e_i$ with $e_i\subset \gamma_i$ and $\gamma_i(0)\in e_i$. Then $K$ is a subgraph of $G^{(m)}$ for some large $m>0$. The following lemma is immediate by the above construction of $H$.

\begin{lem}\label{lem:E3} Suppose $G,H,$ and $K$ are defined as above. The extension $H$ of $G$ by (E3) is equivalent to first extending $G$ to $G^{(m)}$ using (E1), then restricting $G^{(m)}$ to $K$ via (R), and finally adding new $FJ$-edges that are arcs almost in the preimages of $K$. 
\end{lem}

\subsection*{(E3')-extension} For a technical reason, we are interested in a special case of (E3)-extensions. Recall the notations used in the definition of the (E3)-extension. We refer to the (E3)-extension of $G$ as an (E3')-extension if
\begin{enumerate}
    \item Each connected component $G_i$ is either fixed ($f(G_i)\subset G_i$) or prefixed ($f(G_j)\subset G_j$ with $f(G_i)\subset G_j$),
    \item $A=\{\gamma\}$ such that $f(\gamma)\supset \gamma$ and $f(\gamma(0))=\gamma(0)$, and 
    \item If $\gamma(0)\in G$, then $\gamma(0)$ and $\gamma(1)$ lie in different connected components $G_1$ and $G_2$ of $G$, which are fixed by $f$.
\end{enumerate}

\subsection{Preservation of (P1)--(P4) under extensions by (E1)--(E3)}
Recall that the properties (P3) and (P4) rely on the equivalence relation $\sim$, which was defined by using (P1).

\begin{thm}\label{thm:TotalPreorderGraphs}
    Let $f$ be a \pcf rational map, and let $G$ be an $f$-invariant graph, possibly disconnected, with $P_f \cup f(\V(G)) \subset \V(G)$. Suppose that $\preceq$ and $\sim$ are the preorder and equivalence relation on $\Edge(G)$ defined in Definition \ref{defn:PreorderEqrel}. If $G$ satisfies $(P1),(P2),(P3)$, and $(P4)$ and $h_{top}(f|_G)=0$, then all extensions $H$ of $G$ by (E1),(E2), and (E3') also satisfy $(P1),(P2),(P3),(P4)$, and $h_{top}(f|_H)=0$. More precisely, the following hold.
    \begin{enumerate}
        \item (E1)--(E3) and (R) preserve (P1).
        \item (E1),(E2), (E3'), and (R) preserve (P2).
        \item (P1) and (P2) implies $h_{top}=0$ (Lemma \ref{lem:P1P2RecFF Edge}).
    \end{enumerate}
     Below, we suppose that $G$ satisfies (P1) and (P2), so by (1) and (2), the extended graph $H$ also satisfies (P1) and (P2).
    \begin{enumerate}[resume]
        \item (E1) and (R) preserves (P3)
        \item (E2) and (E3') preserve (P3).
        \item (P1), (P2), and (P3) implies (P4) (Lemma \ref{lem:(P2)(P3)Imply(P4)}).
    \end{enumerate}
\end{thm}

\begin{proof}[Proof of Theorem \ref{thm:TotalPreorderGraphs}-(1)]
    The preservation of (P1) is clear for (E1) and (R), and has already been discussed for (E2) and (E3) in their respective constructions.
\end{proof}

\begin{eg}[Possibility that (E3) does not preserve (P2) and (P3)]\label{rem:selfjoin}
\begin{figure}
	\centering
    \begin{subfigure}[b]{0.3\textwidth}
    \def\svgwidth{\textwidth}
\begingroup%
  \makeatletter%
  \providecommand\color[2][]{%
    \errmessage{(Inkscape) Color is used for the text in Inkscape, but the package 'color.sty' is not loaded}%
    \renewcommand\color[2][]{}%
  }%
  \providecommand\transparent[1]{%
    \errmessage{(Inkscape) Transparency is used (non-zero) for the text in Inkscape, but the package 'transparent.sty' is not loaded}%
    \renewcommand\transparent[1]{}%
  }%
  \providecommand\rotatebox[2]{#2}%
  \newcommand*\fsize{\dimexpr\f@size pt\relax}%
  \newcommand*\lineheight[1]{\fontsize{\fsize}{#1\fsize}\selectfont}%
  \ifx\svgwidth\undefined%
    \setlength{\unitlength}{170.51556565bp}%
    \ifx\svgscale\undefined%
      \relax%
    \else%
      \setlength{\unitlength}{\unitlength * \real{\svgscale}}%
    \fi%
  \else%
    \setlength{\unitlength}{\svgwidth}%
  \fi%
  \global\let\svgwidth\undefined%
  \global\let\svgscale\undefined%
  \makeatother%
  \begin{picture}(1,0.40204865)%
    \lineheight{1}%
    \setlength\tabcolsep{0pt}%
    \put(0,0){\includegraphics[width=\unitlength,page=1]{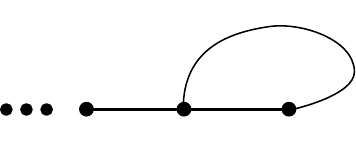}}%
    \put(0.05895109,0.17017946){\color[rgb]{0,0,0}\makebox(0,0)[lt]{\lineheight{1.25}\smash{\begin{tabular}[t]{l}$G$\end{tabular}}}}%
    \put(0.3087461,0.01228048){\color[rgb]{0,0,0}\makebox(0,0)[lt]{\lineheight{1.25}\smash{\begin{tabular}[t]{l}$e_1$\end{tabular}}}}%
    \put(0.63121788,0.01228048){\color[rgb]{0,0,0}\makebox(0,0)[lt]{\lineheight{1.25}\smash{\begin{tabular}[t]{l}$e_2$\end{tabular}}}}%
    \put(0.78891559,0.35976955){\color[rgb]{0,0,0}\makebox(0,0)[lt]{\lineheight{1.25}\smash{\begin{tabular}[t]{l}$e_3$\end{tabular}}}}%
    \put(0.48611801,0.01228049){\color[rgb]{0,0,0}\makebox(0,0)[lt]{\lineheight{1.25}\smash{\begin{tabular}[t]{l}$p$\end{tabular}}}}%
  \end{picture}%
\endgroup%

    \caption{}
    \end{subfigure}
    \hspace{20pt}
    \begin{subfigure}[b]{0.4\textwidth}
    \def\svgwidth{\textwidth}
        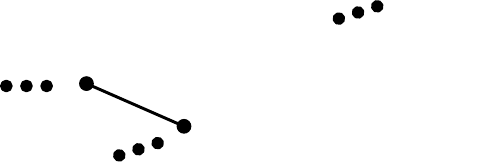
    \caption{}
    \end{subfigure}
    \caption{(E3) may not preserve (P2) and (P3). See Example \ref{rem:selfjoin}}
    \label{fig:selfjoin}
\end{figure}
See Figure \ref{fig:selfjoin}-(A). Suppose $G$ is an $f$-invariant graph, $e_1,e_2\in \Edge(G)$ with $f(e_1)=e_1$ and $f(e_2)=e_2$, and $p\in \V_J(G)$ is a fixed point. Suppose that $e_3$ is an arc almost in the preimages of $G$ such that $e_1,e_2\subset f(e_3)$. Then, $p\in f({\rm int}(e_3))$, and thus the graph $H:=G\cup e_3$, which is an (E3)-extension of $G$, does not satisfy (P2). We avoid this issue preventing a new arc almost in the preimages of $G$ from joining two vertices in the same connected components; see Condition (3) of (E3')-extension.

Consider Figure \ref{fig:selfjoin}-(B). Suppose $G_1$ and $G_2$ are connected components of an $f$-invariant graph $G$ such that for $i\in\{1,2\}$, $e_i\in \Edge(G_i)$ and $p_i\in \V_J(G_i)$. Assume $\{e_1',e_2'\}$ is an int-disjoint collection of arcs almost in the preimages of $G$ such that $f(G_1)\subset G_2$, $f(G_2)\subset G_1$, $e_2\subset f({\rm int}(e_1'))$, $e_2\subset f({\rm int}(e_1'))$, $f(p_1)\neq p_2$, and $f(p_2)\neq p_1$, where $p_1\in \V_J(G_1)$ and $p_2\in \V_J(G_2)$. Then, $e_1\sim e_1'$, $e_1'\preceq e_2$, $e_2\sim e_2'$, and $e_2'\preceq e_1$, so that $\{e_1,e_2\}$ is a virtually ordered cycle violating (P3). We avoid this situation by adding only one arc almost in the preimages of $G$ at each time, in (E3')-extensions.  
\end{eg}

Recall that (E3') is a special case of (E3).

\begin{proof}[Proof of Theorem \ref{thm:TotalPreorderGraphs}-(2)]
    It is clear that (R) preserve (P2).

    In (E2)-extensions, the newly added edges are internal rays of Fatou components and are therefore periodic or preperiodic, so (P2) is preserved.
    
    Let us show (E1) preserves (P2). Suppose $G^{(n)}$ is the level-$n$ extension of $G$ by preimages. Suppose that $e^n$ is an $FJ$-edge of $G^{(n)}$ with the Julia endpoint $v$. Then $f^n$ maps $e^n$ to an $FJ$-edge $e$ of $G$ homeomorphically, such that $f^n(v)$ is the Julia endpoint of $e$. If $v\in f^k({\rm int}(e^n))$ for some $k>0$, then $f^n(v) \in f^k(f^n({\rm int}(e^n)))=f^k({\rm int}(e))$. It contradicts (P2) for $G$, and so (E1) preserves (P2).

    Let us show that (P2) is preserved under extensions $H$ of $G$ by (E3'). Since we have proved that (E1) and (R) preserve (P2), by using Lemma \ref{lem:E3}, we may assume that edges in $\Edge(H)\setminus \Edge(G)$ are arcs almost in the preimages of $G$.
    
    Suppose that $H$ does not satisfy (P2). There exists $e\in \Edge(H)\setminus \Edge(G)$, whose  Julia endpoint is denoted by $v$, such that $v\in f^n({\rm int}(e))$ for some $n>0$. By definition of (E3'), we have $e\subset f(e)$ with $v=f(v)$. Let $w$ denote the Fatou Julia endpoint of $e$, which lie in $G$.

    By definition of (E3'), for every point $x\in {\rm int}(e)$, we have $x\notin G$ and $f^N(x)\in G$ for all sufficiently large $N>0$. Since $v=f^m(v)\in f^{n+m}({\rm int}(e))\subset e$ for all $m>0$, we have $v\in G$ such that $v$ and $w$ lie in the same connected component of $G$. This contradicts the assumption of (E3').
\end{proof}

\begin{proof}[Proof of Theorem \ref{thm:TotalPreorderGraphs}-(4)]
     Suppose $G$ satisfies (P1), (P2), and (P3). It is clear that (R) preserves (P3). We show the level-$n$ extension $G^{(n)}$ by (E1) satisfies (P3).
    
    Suppose $e^n_1,e^n_2,\dots, e^n_k$ is a virtually ordered cycle of $G^{(n)}$. More precisely, for each $i \mod k$, there exists $e'^n_i$ with $e^n_i\sim e'^n_i$ such that $e^n_{i+1}\subset f^{n_i}(e'^n_i)$ for some $n_i>0$. Let $v^n_i$ denote the Julia vertex of $e^n_i$. By the definition of (E1)-extension, $e_i:=f^n(e^n_i)$ is an edge of $G$. The map $f^n\from \Edge(G^{(n)})\to \Edge(G)$ preserves the preorders and equivalence relations. Hence $e_1,e_2,\dots,e_k$ are also a virtually ordered cycle on $\Edge(G)$ such that $e'_{i+1}:=f^n(e'^n_{i+1})\subset f^{n_i}(f^n(e^n_i))=f^{n_i}(e_i)$ and $e'_i\sim e_i$ for all $i\mod k$. The vertex $v_i:=f^n(v^n_i)$ is the common Julia vertex of $e_i$ and $e'_i$. By (P3) of $G$, we have $f^{n_i}(v_i)=v_{i+1}$ for all $i \mod k$. By (P1), we have $v_i=f^n(v^n_i)$ for all $i \mod k$.
    
    Next, we show that $f^{n_i}(v_i^n)=v^n_{i+1}$ for all $i \mod k$. For contradiction, without loss of generality, suppose that $v^n_2\neq f^{n_1}(v^n_1)$. Since $v^n_2\in f^{n_1}(e'^n_1)$ and $e^n_1$ is an FJ-edge, we have $v_2^n\in f^{n_i}({\rm int}(e'^n_1))$. Hence, by taking $f^n$, we obtain $v_2\in f^{n_i}({\rm int}(e'_1))$. By the conclusion of the previous paragraph, we have $v_1=f^{N}(v_2)$ where $N=n_2+n_3+\dots+n_k$. Hence, $v_1\in f^{n_1+N}({\rm int}(e'_1))$, which contradicts (P2) of $G$.
\end{proof}

We prove Theorem \ref{thm:TotalPreorderGraphs}-(5) by using an induction argument and the following lemma.

\begin{lem}\label{lem:P3 from P2}
    For a \pcf rational map $f$, let $H$ be an $f$-invariant graph satisfying (P1).
    Suppose $e_1,e_2,\dots,e_k$ is a virtually ordered cycle of $H$ such that for each $i\mod{k}$, there exist $n_i\ge 0$ and an edge $e_i'$ such that $e_i\sim e_i'$ and $e_{i+1}\subset f^{n_i}(e_i')$. Let $v_i$ denote the common Julia vertex of $e_i$ and $e_i'$.
    \begin{enumerate}
        \item If $k=1$ and $H$ satisfies (P2), then $f^{n_1}(v_1)=v_1$.
        \item Suppose $k\ge2$. If $f^{n_1}(v_1)=v_2$, then there exists $e_2''$ with $e_2\sim e_2''$ such that $e_2'',e_3,\dots,e_k$ is a virtually ordered cycle. More precisely, for $k>2$, we have $e_2''\subset f^{n_k+n_1}(e_k')$ and $e_{i+1}'\subset f^{n_i}(e_i')$ for $i=2,3,\dots,k$. For $k=2$, we have $e_2''\subset f^{n_2+n_1}(e_2')$. 
        \item In the case of (2), suppose $H$ satisfies (P2) and $f^{n_i}(v_i)=v_{i+1}$ for $i=2,3,\dots,k-1$. Then $f^{n_k}(v_k)=v_1$.
    \end{enumerate}
\end{lem}
\begin{proof}
    (1) Suppose $k=1$ and $H$ satisfies (P2). Then, $e_1\subset f^{n_1}(e_1')$. Hence, if $v_1\neq f^{n_1}(v_1)$, then $v_1\in f^{n_1}({\rm int}(e_1))$, which contradicts (P2).

    (2) Suppose $k\ge2$ and $f^{n_1}(v_1)=v_2$. Since $v_1\in e_1\subset f^{n_k}(e_k')$, we have $v_2\in f^{n_k+n_1}(e_k')$. It follows that there exists an edge $e_2''$ incident to $v_2$ such that $e_2''\subset f^{n_k+n_1}(e_k')$. Then, we obtain the desired virtually ordered cycle.

    (3) Suppose $f^{n_k}(v_k)\neq v_1$. Then $v_1\in f^{n_k}({\rm int}(e_k))$. It follows that for $N:=n_1+n_2+\dots+n_{k-1}$, we have $v_k=f^{N}(v_1)\in f^{N+n_k}({\rm int}(e_k))$, which contradicts (P2).
\end{proof}

\begin{proof}[Proof of Theorem \ref{thm:TotalPreorderGraphs}-(5)]
    Suppose $G$ satisfies (P3) and $H$ is an extension of $G$ by (E2) or (E3'). Suppose that $e_1,e_2,\dots,e_k$ is a virtually ordered cycle of $H$ but not of $G$. We use the notations $e_i',v_i,n_i$ defined in Lemma \ref{lem:P3 from P2}. By an induction argument using Lemma \ref{lem:P3 from P2}, it suffices to show that $f^{n_j}(v_j)=v_{j+1}$ for some $j$.
    
    Since $e_1,e_2,\dots,e_k$ is not a virtually ordered cycle of $G$, there exists $i$ such that $e_i$ or $e_i'$ does not lie in $G$.
    
    Consider the case $e_i\notin \Edge(G)$. Without loss of generality, assume $i=1$. Since $e_1\subset f^{n_k}(e_k')$, by $f$-invariance of $G$, we obtain $e_k' \notin \Edge(G)$. If $H$ is an (E2)-extension, then both $e_1$ and $e_k'$ are internal rays of Fatou components. Hence $e_1\subset f^{n_k}(e_k')$ implies $e_1= f^{n_k}(e_k')$ and $v_1 = f^{n_k}(v_k)$. If $H$ is an (E3')-extension, by Lemma \ref{lem:CollectionAlmostinPreimages}, one can deduce $v_1=f^{n_k}(v_k)$ from $e_1\subset f^{n_k}(e_k')$. 

    Consider the case $e_i'\notin \Edge(G)$. Without loss of generality, assume $i=1$. Note that $e_2\subset f^{n_1}(e_1')$.
    
    If $H$ is an (E2)-extension, then $e_1'$ is an internal ray of a Fatou component. Then, $e_2\subset f^{n_1}(e_1')$ implies $e_2$ is also an internal rays of Fatou component such that $e_2= f^{n_1}(e_1')$ and $v_2= f^{n_1}(v_1)$.
    
    Suppose that $H$ is an (E3')-extension. By Lemma \ref{lem:E3}, we may assume that $H$ is obtained by adding a single new edge that is an arc almost contained in the preimages of $G$. Since we assumed $e_1'\notin \Edge(G)$, the new edge is $e_1'$.

    By the argument for the case $e_1 \notin \Edge(G)$, we may assume that $e_i \in \Edge(G)$ for all $i$, and hence $v_i \in \V(G)$ for all $i$. By the definition of (E3'), we have $f(e_1') \supset e_1'$ and $f(v_1) = v_1$.

    Let $G_1$ be the connected component of $G$ containing the Fatou endpoint of $e_1'$. Since $f(e_1') \supset e_1'$, it follows that $f(G_1) \subset G_1$.

    If $v_2 = v_1 (= f^{n_1}(v_1))$, then we can apply the induction argument using Lemma \ref{lem:P3 from P2}. Suppose instead that $v_2 \neq v_1$. Since $v_2 \in e_2 \subset f^{n_1}(e_1')$, we have $v_2 \in f^{n_1}(\mathrm{int}(e_1'))$, and hence $v_2 \in G_1$. It follows that $e_2 \subset G_1$. Moreover, $e_2' \subset G$, and thus $e_2' \subset G_1$. Indeed, if $e_2' \notin \Edge(G)$, then since $e_1'$ is the only new edge, we must have $e_2' = e_1'$, which implies $v_1 = v_2$, contradicting our assumption.

    From $e_3 \subset f^{n_2}(e_2')$, it follows that $e_3 \in \Edge(G_1)$. By the same argument used to show $e_2' \in \Edge(G_1)$, we obtain $e_3' \in \Edge(G_1)$. Proceeding inductively, we conclude that $e_k, e_k' \in \Edge(G_1)$, and hence $e_1 \in \Edge(G_1)$. This implies $v_1 \in \V(G_1)$, contradicting Condition (3) of (E3') that the two endpoints of $e_1'$ cannot lie in the same connected component of $G$.
\end{proof}

\subsection*{Property (P5) and vertex conditions (E2-V) and (E3-V)}
Recall the (E2)- and (E3)-extensions relies on the vertex conditions (E2-V) and (E3-V). We will use the following property, referred to as (P5), to show that (E2-V) and (E3-V) are always satisfied in the setting of our application.
\begin{enumerate}
    \item[(P5)] If a point $x\in G$ is periodic under $f$, then $x\in \V(G)$.
\end{enumerate}
Property (P5) is trivial for Fatou periodic points, since they lie in $P_f$. Moreover, it is easy to see that $h_{top}(f|_G)>0$ implies that $G$ contains infinitely many periodic (Julia) points. Hence, (P5) relies on the condition $h_{top}(f|_G)=0$.

\begin{lem}\label{lem:(P5)Preserve}
    Suppose that $G$ is an $f$-invariant graph with $P_f\subset G$ satisfying (P1) and $h_{top}(f|_G)=0$. Then (E1), (E2), and (E3) preserve (P5).
\end{lem}
\begin{proof}
    The statement is straightforward for (E1). In our constructions of (E2) and (E3), new periodic Julia points are defined as vertices of the extended graphs.
\end{proof}

\begin{lem}\label{lem:(P5) (E2-V)}
    Suppose $G$ is an $f$-invariant graph with $P_f\cup f(\V(G))\subset f(\V(G))$ satisfying (P1) and (P5). Let $A:=\{e_1,e_2,\dots,e_k\}$ be an $f$-invariant collection of internal rays of Fatou components. Suppose that the level-$N$ (E1)-extension $G^{(N)}$ contains all Fatou endpoints of $e_i$'s for some $N>0$. Then, the condition (E2-V) for $A$ holds for $G^{(m)}$ for any sufficiently large $m>0$.
\end{lem}
\begin{proof}
    Recall that the set $B$ of Julia endpoints of $e_i$'s is $f$-invariant, and it consists of periodic cycles and their preimages. Every periodic point in $B\cap G$ lies in $\V(G)$ by (P5). Hence, every point $x\in B$ with $x\in G$ is contained in $\V(G^{(M)})\cap G$ for some large $M>0$. Then (E2-V) holds for $G^{(m)}$ for every $m\ge\max\{M,N\}$.
\end{proof}

\begin{lem}\label{lem:(P5) (E3-V)}
    Suppose that $G$ is an $f$-invariant graph with $P_f\cup f(\V(G))\subset \V(G)$ satisfying (P1) and (P5). Then, for every $f$-invariant set $A:=\{\gamma_1,\gamma_2,\dots,\gamma_k\}$ of int-disjoint arcs almost in the preimages of $G$, the condition (E3-V) for $A$ holds.
\end{lem}
\begin{proof}
    By Lemma \ref{lem:CollectionAlmostinPreimages}, $X:=\{\gamma_i(0)\}_{i=1,2,\dots,k}$ is an $f$-invariant set. The remainder of the proof is similar to that of Lemma \ref{lem:(P5) (E2-V)}.
\end{proof}

\section{Crochet maps}\label{sec:crochet}

In this section, we state Dudko--Hlushchanka--Schleicher's crochet decomposition theorem (Theorem~\ref{thm:DHScrochet}) and prove one direction of Theorem~\ref{theorem:main} (Corollary~\ref{coro:NotCrochetCDim}). Section~\ref{subsec:eg of crochet} presents examples, including polynomial matings (Proposition~\ref{prop:MatingCrochet}). In Section~\ref{subsec:EntZeroGraph_Crochet}, we review the construction of entropy-zero invariant graphs for crochet maps by Dudko--Hlushchanka--Schleicher and relate it to materials in Section~\ref{Sec:GraphExt and Preorder}.

\begin{defn}[Crochet maps]
	For a \pcf rational map $f$ with non-empty Fatou set, we say that $f$ is a {\it crochet map} if there exists an $f$-invariant connected graph $G$ with $P_f\subset G$ and $h_{top}(f|_G)=0$.
\end{defn}
\begin{defn}[Cantor multicurve]
	Let $\TmapA$ be a \pcf branched covering. A multicurve $\Gamma$ of $(S^2,A)$ is a {\it Cantor multicurve} if there is $\gamma\in \Gamma$ such that the number of connected components of $f^{-n}(\Gamma)$ that are isotopic to $\gamma$ relative to $A$ grows exponentially fast with $n$.
\end{defn}

It follows from the definitions that a multicurve $\Gamma$ is a Cantor multicurve if and only if the Perron-Frobenius eigenvalue of the linear 1-transformation $f_{1,\Gamma}\from \Rbb^\Gamma \to \Rbb^\Gamma$ is greater that one, i.e., $\lambda_1(\Gamma)>1$. Hence, if $\Gamma$ contains no Levy cycles, then $\Gamma$ is a Cantor multicurve if and only if $Q(\Gamma)>1$.

Cantor multicurves produces a surjection from $\mathcal{C}\times S^1$ onto subsets of Julia sets, where $\mathcal{C}$ is a Cantor set; see Figure \ref{fig:Neckless} and \cite[Section 7.3]{PilThu_ARConfDim}.

\begin{figure}
	\centering
	\def\svgwidth{0.4\textwidth}
	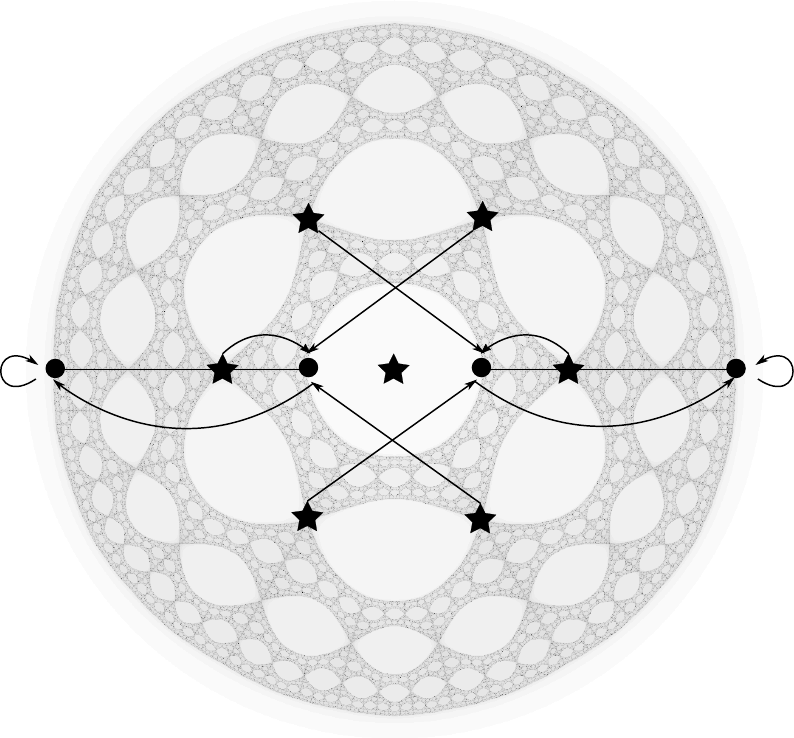
	\caption{$f(z)\approx 0.128262\, z^3+1/z^3$. This is an example in \cite[Figure 1-(e)]{DHS_Decomp}. The points $0$ and $\infty$ have degree $3$ and $f(0)=\infty$. The other six critical points are of degree 2 and prefixed points with preperiod 2. Let $\gamma$ be a simple closed curve that is the boundary of a small neighborhood of the Fatou component of $0$. Then $f^{-1}(\gamma)$ consists of two simple closed curves both isotopic to $\gamma$ relative to $P_f$. Thus $\{\gamma\}$ is a Cantor multicurve. The Julia set $\Jcal_f$ is the union of uncountably many simple closed curves homotopic to $\gamma$. The Fatou quotient $\hCbb/\!\sim_F$ can be identified with a line segment on the real line.}
\label{fig:Neckless}
\end{figure}

\begin{defn}[Fatou quotient]
	For a \pcf rational map $f$ with a non-empty Fatou set, we define an equivalence relation $\sim_F$ on $\hCbb$ as the closure, as a subset of $\hCbb\times \hCbb$, of the equivalence relation $\sim$ defined by
	\begin{center}
    	$x\sim y$ if and only if $x$ and $y$ are in the same Fatou component.
	\end{center}
	The quotient space $\hCbb/\!\sim_F$ is called the {\it Fatou quotient} of the Riemann sphere. Note that $\hCbb/\!\sim_F$ is equal to the Hausdorff quotient of $\hCbb/\!\sim$.
\end{defn}

\begin{thm}[Dudko--Hlushchanka--Schleicher \cite{DHS_Decomp}]\label{thm:DHScrochet}
Let $f$ be a \pcf rational map with a non-empty Fatou set. Then there exists a completely $f$-invariant multicurve $\Gamma$ so that any small rational map of the $\Gamma$-decomposition is either a \Sier map or a crochet map. Moreover, if $f$ is not a crochet map then either
\begin{itemize}
	\item $\Gamma$ is a Cantor multicurve, or possibly non-exclusively
	\item there exists a small rational map of the $\Gamma$-decomposition that is a \Sier map.
\end{itemize}
Also the following are equivalent.
\begin{itemize}
    \item [(1)] For every pair of Fatou components $U$ and $V$, their centers are connected by a curve $\gamma$ that intersects the Julia set $\Jcal_f$ in a countable set.
    \item [(2)] The Fatou quotient $\hCbb/\sim_{F}$ is a singleton.
    \item [(3)] $f$ is a crochet map.
\end{itemize}
\end{thm}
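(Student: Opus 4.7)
The plan is to attack this result in three stages: (a) the equivalence $(1)\Leftrightarrow (2)\Leftrightarrow (3)$; (b) the existence of the canonical completely $f$-invariant multicurve $\Gamma$ with small pieces of the expected types; and (c) the non-crochet dichotomy. The equivalence of (1)--(3) is the most accessible part and I would do it first.

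For stage (a), I would begin with $(3)\Rightarrow (1)$. Given an $f$-invariant graph $G\supset P_f$ with $h_{top}(f|_G)=0$, forward invariance gives $G\subset f^{-1}(G)\subset f^{-2}(G)\subset \cdots$, so $\widetilde G:=\bigcup_n f^{-n}(G)$ is connected. Proposition~\ref{prop:entpysubexpedgesubdiv} applied to the Markov map $f|_G$ makes the set of recurrent edges polynomial and hence $\widetilde G\cap \Jcal_f$ countable. Every Fatou center is an iterated preimage of a periodic critical point in $P_f\subset G$, so any two centers are joined by a simple arc in $\widetilde G$ with countable Julia-intersection, proving (1). For $(1)\Rightarrow (2)$, such an arc projects to an arc in $\hCbb/\sim_F$; because each of the countably many Julia points it meets is accumulated by Fatou components from both sides, the closure of $\sim_F$ collapses the arc to a single point, and running this over all pairs of Fatou components forces $\hCbb/\sim_F$ to be a singleton. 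The direction $(2)\Rightarrow (3)$ is the hardest of the three: I would construct a ``tree of spiders'' by connecting all postcritical Fatou centers through countably many Julia arcs (which the collapsed Fatou quotient supplies), pull this tree back under $f$, and trim to an $f$-invariant subgraph whose Markov structure has polynomial edge-growth, thereby applying Proposition~\ref{prop:entpysubexpedgesubdiv} in reverse.

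For stage (b), the plan is to define $\Gamma$ as a maximal (up to isotopy) completely $f$-invariant multicurve whose components separate disks carrying ``crochet-type'' dynamics from disks that do not. Rationality of $f$ and Theorem~\ref{thm:Thuchac} guarantee no Thurston obstruction, so $f$-stability is automatic; a maximality argument then promotes stability to complete invariance. Each small rational map $f_i$ of the $\Gamma$-decomposition is classified by applying the equivalence $(1)\Leftrightarrow (3)$ to its piece: if the Fatou quotient of $f_i$ is a point, then $f_i$ is crochet; otherwise $\Jcal_{f_i}$ is connected, locally connected, has empty interior, and its complementary Jordan curves are non-separating, so by Whyburn's topological characterization $\Jcal_{f_i}$ is a Sierpi\'{n}ski carpet, making $f_i$ a Sierpi\'{n}ski carpet map.

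For stage (c), the non-crochet dichotomy would follow by contradiction. If neither alternative held, every small map would be crochet and $\lambda_1(\Gamma)\le 1$ would prevent $\Gamma$ from being Cantor. In that regime one glues the invariant zero-entropy graphs of the crochet pieces along the finitely many preimages of the (non-Cantor) multicurve $\Gamma$; the resulting global $f$-invariant graph inherits polynomial edge growth both on the pieces and across $\Gamma$, which by Proposition~\ref{prop:entpysubexpedgesubdiv} forces $h_{top}(f|_G)=0$, contradicting the assumption that $f$ is not crochet. The hardest step of the whole program is the existence and canonicity of $\Gamma$ in stage (b): maximality in the mapping class group of $(\hCbb,P_f)$ requires careful surgery, and one must show that the inductive refinement terminates rather than producing an infinite ascending chain of multicurves. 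This stabilization is the technical heart of \cite{DHS_Decomp} and is the step I expect to be the main obstacle; the equivalences in stage (a) and the classification in stages (b)--(c) are comparatively formal once $\Gamma$ is in hand.
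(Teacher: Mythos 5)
Your proposal takes a genuinely different route from the paper's appendix, which constructs $\Gamma$ bottom-up: it starts from Levy arcs (Proposition~\ref{prop:NoLevyArciffCarpet}), builds \emph{clusters} (closures of unions of Fatou components meeting iterated preimages of Levy arcs), extends clusters until they are pairwise disjoint, takes boundary curves to get a multicurve, iterates the construction on the small pieces, and finally reduces the resulting multicurve $\Ccal$ by deleting primitive unicycles incident to crochet pieces on both sides. Your approach is top-down: posit a maximal $\Gamma$ and then classify the pieces. That difference is where the gaps appear.

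The main gap is in stage (b). Defining $\Gamma$ as ``a maximal completely $f$-invariant multicurve whose components separate disks carrying crochet-type dynamics from disks that do not'' is circular: a complementary disk of $\Gamma$ does not carry a rational map by itself --- the small rational map is produced \emph{by} the $\Gamma$-decomposition --- so the selection criterion presupposes the decomposition you are trying to construct. Moreover, the classification step as stated is false: a small rational map with nontrivial Fatou quotient need not be a \Sier map. For instance, a small map that still supports a Cantor multicurve has a nontrivial Fatou quotient (it contains an arc, as in Figure~\ref{fig:Neckless}), yet its Julia set is not a carpet because Fatou components touch across the Cantor circles; Whyburn's characterization requires the complementary Jordan curves to be \emph{pairwise disjoint}, not merely non-separating. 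The paper's cluster construction sidesteps this by arranging that every small map of the final multicurve has \emph{empty maximal cluster}, which is exactly equivalent to being crochet or \Sier (via Proposition~\ref{prop:NoLevyArciffCarpet}).

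Stage (c) also has a real gap. Gluing the zero-entropy invariant graphs of the small crochet pieces ``along $\Gamma$'' is not well-defined as stated: each invariant graph lives in its own small sphere $S^2(i)$, where the boundary circles of $\Gamma$ have been collapsed to marked points. To transplant these graphs to $(\hCbb,P_f)$ you must blow the collapsed points back up to circles and match the graphs' boundary behavior across the annular neighborhoods of $\Gamma$; this is a combination-theorem step, and it is precisely where the paper (following \cite{DHS_Decomp}) restricts to primitive unicycles and imposes the $F_\gamma$-vacant crochet condition to make the splice work. Without that control, the existence of a global forward-invariant graph --- and the claim that it inherits polynomial edge growth through the annuli --- is unsubstantiated. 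So your contradiction argument in stage (c) has the right flavor (it mirrors the paper's ``small crochet maps adjacent via primitive unicycles'' reduction) but omits the technical hypotheses that make it valid.

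Finally, a small point on stage (a): the reason $(1)\Rightarrow(2)$ holds is not that Julia points on $\gamma$ are ``accumulated by Fatou components from both sides.'' Rather, collapsing the Fatou segments of $\gamma$ yields a compact connected Hausdorff quotient that is countable (countably many collapsed intervals plus countably many Julia points), and a countable connected compact Hausdorff space is a singleton. This is consistent with the paper's remark that $(1)\Rightarrow(2)$ is immediate, but the mechanism you cite is not the right one.
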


By \cite[Theorem C]{PilThu_ARConfDim}, if a \pcf hyperbolic rational map $f$ has a Sierpi\'{n}ski carpet Julia set, then $\overline{E}^1(f) > 1$.

\begin{coro}\label{coro:NotCrochetCDim}
Let $f$ be a \pcf hyperbolic rational map. If $f$ is not a crochet map, then $\ARC(\Jcal_f)>1$.
\end{coro}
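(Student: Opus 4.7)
The plan is to invoke the dichotomy provided by Theorem \ref{thm:DHScrochet} and handle each of its two cases via Theorem \ref{theorem:MonoBrCov}, using Theorem \ref{thm:PropertyOfComfEnergy} to convert energy information into conformal dimension information.

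First I would apply Theorem \ref{thm:DHScrochet} to $f$: since $f$ is not a crochet map, there is a completely $f$-invariant multicurve $\Gamma$ such that either (i) $\Gamma$ is a Cantor multicurve, or (ii) some small rational map $f_i$ of the $\Gamma$-decomposition is a \Sier map. Because $f$ is a rational map, $\Gamma$ contains no Levy cycle; hence $\lambda_p(\Gamma)$ is strictly decreasing in $p$ whenever it is not identically zero, and the critical exponent $Q(\Gamma)$ is well-defined with $\lambda_{Q(\Gamma)}(\Gamma)=1$.

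In case (i), the Cantor condition is equivalent (as noted just after the definition of Cantor multicurve) to $\lambda_1(\Gamma)>1$. Combined with the strict decrease of $\lambda_p(\Gamma)$ in $p$, this forces $Q(\Gamma)>1$. Theorem \ref{theorem:MonoBrCov} then gives
\[
\ARC(\Jcal_f)\ge Q(\Gamma)>1.
\]

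In case (ii), Proposition \ref{prop:CarpetE1} yields $\overline{E}^1(f_i)>1$. By Theorem \ref{theorem:MonoBrCov} applied to $p=1$,
\[
\overline{E}^1(f)\ge \bigl(\overline{E}^1(f_i)\bigr)^{1/\tau_i}>1.
\]
Setting $p_*:=\ARC(\Jcal_f)$, Theorem \ref{thm:PropertyOfComfEnergy}(4) gives $\overline{E}^{p_*}(f)=1$, while Theorem \ref{thm:PropertyOfComfEnergy}(3) asserts that $p\mapsto \overline{E}^p(f)$ is monotonically decreasing. Since $\overline{E}^1(f)>1=\overline{E}^{p_*}(f)$, monotonicity forces $p_*>1$, i.e.\ $\ARC(\Jcal_f)>1$.

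There is no real obstacle beyond checking that the inputs line up: the two bullets of Theorem \ref{thm:DHScrochet} exhaust the non-crochet case, and in each bullet the corresponding piece of Theorem \ref{theorem:MonoBrCov} together with Theorem \ref{thm:PropertyOfComfEnergy} directly delivers the strict inequality $\ARC(\Jcal_f)>1$. The only subtlety to spell out is that $\Gamma$ has no Levy cycle (because $f$ is realized as a rational map, so by Theorem \ref{thm:Thuchac} and the characterization of obstructions the completely invariant multicurve furnished by the crochet decomposition cannot support a Levy cycle), which is what legitimizes both the strict monotonicity of $\lambda_p(\Gamma)$ in case (i) and the applicability of Theorem \ref{theorem:MonoBrCov} in case (ii).
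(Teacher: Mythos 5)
Your proof is correct and follows essentially the same route as the paper: apply Theorem~\ref{thm:DHScrochet}, handle the Cantor-multicurve case via the $Q(\Gamma)$ part of Theorem~\ref{theorem:MonoBrCov}, and handle the Sierpi\'nski-carpet case via Proposition~\ref{prop:CarpetE1} and the energy inequality in Theorem~\ref{theorem:MonoBrCov} combined with Theorem~\ref{thm:PropertyOfComfEnergy}. The one step the paper makes explicit that you leave implicit is invoking Lemma~\ref{lem:SmallMapHyp} (which uses precisely the no-Levy-cycle observation you already made) to confirm that the small map $f_i$ is hyperbolic, as required by the hypotheses of Proposition~\ref{prop:CarpetE1}.
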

\begin{proof}
By Theorem \ref{thm:DHScrochet}, there is a multicurve $\Gamma$ which is completely invariant up to isotopy such that either $\Gamma$ is a Cantor multicurve or there exists a small rational map of the $\Gamma$ decomposition, say $g$, such that the Julia set $\Jcal_g$ is a Sierpi\'{n}ski carpet.

If $\Gamma$ is a Cantor multicurve, then $Q(\Gamma)>1$. Thus $\ARC(\Jcal_f)>1$ by Theorem \ref{thm:MonoConfEngDim}.

Suppose that there exists a small rational map $g$ whose Julia set $\Jcal_g$ is a Sierpi\'{n}ski carpet. By Lemma \ref{lem:SmallMapHyp}, the map $g$ is a hyperbolic \pcf rational map. By Theorem \ref{thm:MonoConfEngDim} and \cite[Theorem C]{PilThu_ARConfDim}, we have
\[
	\overline{E}^1(f) \ge \left(\overline{E}^1(g)\right)^{1/\tau}>1,
\]
where $\tau$ is the first return time of $g$. Then $\ARC(\Jcal_f)>1$ follows from Theorem \ref{thm:ARCdim} and the fact that $p\mapsto \overline{E}^p(f)$ is monotonically decreasing (Theorem \ref{thm:PropertyOfComfEnergy}).
\end{proof}

\subsection{Examples of crochet maps}\label{subsec:eg of crochet}

Below is a list of families of crochet maps and their invariant graphs.
\begin{itemize}
	\item Post-critically finite polynomials and spiders \cite{spider}. More precisely, spiders define graphs that are forward $f$-invariant up to isotopy. To obtain $f$-invariant graphs, we can realize each edge of the spider graphs by a concatenation of one external ray and one internal ray.
	\item Critically fixed rational maps and Tischler graphs \cite{pilgrim_Classification_criticallyfixed, Hlushchanka_criticallyfixed}.
	\item Second iterates of critically fixed anti-rational maps and  Tischler graphs \cite{Geyer_CritFixAnti, LLM_KissCritFixAnti}.
	\item Post-critically finite Newton maps and their extended Newton graphs \cite{LMS_ClassfiPCFiniteNewton, DMRS_ClassifiPCFixedNewton}. We need a slight modification in this case. In these articles \cite{LMS_ClassfiPCFiniteNewton, DMRS_ClassifiPCFixedNewton}, invariant graphs, called {\it extended Newton graphs}, are defined as the extensions of Newton graphs by using Hubbard trees for renormalizable parts. Instead of using Hubbard trees, we extend the Newton graphs by using (pre-)periodic bubble rays as a replacement for external rays. This extension yields invariant graphs with topological entropy zero. 
	\item Matings of \pcf polynomials at least one of whose core entropy is zero (Proposition \ref{prop:MatingCrochet}).
\end{itemize}

\begin{prop}[Crochet matings]\label{prop:MatingCrochet}
	Let $f$ and $g$ be degree-$d$ \pcf polynomials. Suppose that the B\"{o}ttcher coordinate for the external Fatou components are fixed so that the mating of $f$ and $g$ is uniquely defined. Assume that $f$ and $g$ are mateable and $F$ is the corresponding degree-$d$ rational map. If $f$ (or $g$) has core entropy zero, then $F$ is a crochet map.
\end{prop}
\begin{proof}
	Let $S_g$ be the spider of $g$ and $\Theta(g)$ be the set of external angles whose external rays are in $S_g$. Let $M$ be the set of landing points of external rays of $f$ whose angles are in $-\Theta(g)$. Let $T_f$ be the $f$-invariant tree defined as the regulated hull of $M\cup P_f \cup \textup{Crit}(f)$. By \cite{Rees_semiconj, ShiShi_Rees}, $F$ is topologically conjugate to the quotient of the formal mating of $f$ and $g$ by the ray-equivalence classes. Hence, $S_g \cup T_f$ yields a connected $F$-invariant graph $G$ containing $P_f$. It is not hard to show that both $S_g$ and $T_f$ have entropy zero. Hence $G$ also has topological entropy zero.
\end{proof}

\begin{rem}
	Even if both polynomials $f$ and $g$ have positive core entropy, their mating may be a crochet map, e.g., the mating of the Airplane polynomial ($\approx z^2-1.75488$) and the Kokopelli polynomial ($\approx z^2-0.15652+ 1.03225 i$).
\end{rem}

\subsection{Entropy zero invariant graphs of crochet maps}\label{subsec:EntZeroGraph_Crochet}

In this subsection, we review the construction of entropy zero invariant graphs in \cite{DHS_Decomp}.

\subsection*{Levy arcs and adjacency of Fatou components} Let $f$ be a \pcf rational map. Recall that an arc of $(\hCbb,P_f)$ is an embedding $\alpha\from [0,1]\to \hCbb$ so that $\alpha((0,1)) \subset \hCbb \setminus P_f$ and $\alpha(\{0,1\})\subset P_f$. A {\it Levy arc} is an arc $\alpha$ satisfying the following: There exists a finite sequence of arcs $\alpha=\alpha_1,\alpha_2,\dots, \alpha_n$ such that $\alpha_{i+1}$ is homotopic relative to $P_f$ to a lifting of $\alpha_i$ through $f$ for every $i\mod{n}$.

By using the forward-expanding property of the canonical orbifold metric (Equation \eqref{eqn:Contract_Conf.metric}), we can show that each Levy arc $\alpha$ is homotopic to either (i) a single internal ray or (ii) the concatenation of two internal rays of Fatou components. Hence, we always assume that Levy cycles refer to these representatives.

Consider a \pcf\ rational map $f$. Then the Julia set $\Jcal_f$ is connected and locally connected, and every Fatou component of $f$ is simply connected.  We say that two Fatou components $U$ and $V$, possibly with $U = V$, are {\it adjacent} if $\overline{U} \cap \overline{V} \neq \emptyset$. A Fatou component $U$ is called {\it self-adjacent} if $U$ is not a Jordan domain. Moreover, the Julia set $\Jcal_f$ is a Sierpi\'{n}ski carpet if and only if no two Fatou components are adjacent and no Fatou component is self-adjacent \cite{Whyburn_TopChacSierCarpet}.

Proposition \ref{prop:NoLevyArciffCarpet} is proved for \pcf hyperbolic rational maps in \cite[Corollary 5.18]{Pil_Thesis}. Although the non-hyperbolic case is well-known to experts, we include it's proof here for completeness and future reference.

\begin{prop}\label{prop:NoLevyArciffCarpet}
	For a \pcf rational map $f$ with a non-empty Fatou set, the Julia set $\Jcal_f$ is a \Sier if and only if $f$ does not have a Levy arc of $(\hCbb,P_f)$.
\end{prop}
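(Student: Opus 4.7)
The plan is to reduce the proposition to a topological characterization of carpet Julia sets and then translate obstructions to carpet-hood into Levy arcs.

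First I would invoke \cite{GHMZ_JCorveCarpetJulia}, which states that for a \pcf rational map $f$ with non-empty Fatou set, $\Jcal_f$ is a Sierpi\'{n}ski carpet if and only if every Fatou component of $f$ is a Jordan domain and the closures of any two distinct Fatou components are disjoint. It therefore suffices to show that $f$ admits a Levy arc if and only if one of these two conditions fails.

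For the direction \emph{$\Jcal_f$ carpet $\Rightarrow$ no Levy arc}, suppose for contradiction that $\alpha_1,\dots,\alpha_n$ realize a Levy arc cycle. I would pass to hyperbolic geodesic representatives in $\hCbb\setminus P_f$. Because each $\alpha_{i+1}$ is homotopic rel $P_f$ to a lift of $\alpha_i$ of degree one, the hyperbolic lengths of these arcs cannot contract strictly under pullback, contradicting the standard strict contraction of the Poincar\'{e} metric for non-trivial coverings. To leverage this against the carpet structure, I would further examine a Hausdorff-limit configuration of deeper iterated pullbacks: these must pinch the Julia set at points where either two Fatou components share a boundary point or a single Fatou component has a non-Jordan boundary, contradicting the characterization above.

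For the direction \emph{no Levy arc $\Rightarrow$ $\Jcal_f$ carpet}, I would argue the contrapositive by manufacturing a Levy arc from a failure of either carpet condition. If two Fatou components $U, V$ touch at $p\in\Jcal_f$, form an arc through $p$ connecting their super-attracting cycle centers in $P_f$ via internal rays. If instead a Fatou component has non-Jordan boundary, local connectivity (which holds for \pcf rational maps) yields a cut point on its boundary, and two internal rays landing there yield an analogous arc with endpoints in $P_f$. In either setup, every iterated pullback of the chosen arc continues to pass through a touching point or cut point sitting over the original feature. These distinguished points form an eventually periodic set under $f$, reducing the pullback dynamics on arcs to finitely many homotopy classes rel $P_f$; a pigeonhole argument then produces a cycle of lifts, i.e., a Levy arc.

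The main obstacle will be the bookkeeping in the last step: showing that the iterated pullback arcs stay in finitely many homotopy classes rel $P_f$ and that the lift chosen at each step has degree one on the arc. The finiteness should follow from the periodicity of touching patterns (when two components touch) or from the finitely many cut points at each level (when a Fatou component has non-Jordan boundary), both controlled by the pcf hypothesis. The degree-one condition should follow because the arc is confined to the preimage of a fixed touching point or cut point, where the chosen local branch of $f^{-1}$ acts injectively. With these two ingredients in place, the classical Levy cycle extraction argument of Thurston, applied to arcs instead of curves, completes the proof.
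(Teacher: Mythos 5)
Your high-level structure matches the paper's---prove the two contrapositives, and in the ``not carpet $\Rightarrow$ Levy arc'' direction build the arc from internal rays---but both of your directions contain genuine gaps. For ``carpet $\Rightarrow$ no Levy arc'', the hyperbolic-geodesic length argument is ill-posed: the endpoints of a Levy arc lie in $P_f$, which are punctures of $\hCbb\setminus P_f$, so its geodesic representative has infinite hyperbolic length and there is no finite quantity for the Poincar\'{e} contraction to act on. Worse, the contradiction you propose makes no use of the carpet hypothesis; if it were valid it would rule out Levy arcs for every \pcf rational map, contradicting their existence for, say, the basilica (join the centers of the two bounded Fatou components by internal rays through the $\alpha$ fixed point). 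The paper instead uses the expanding B\"{o}ttcher/orbifold metric to extract a literal $f^n$-invariant representative of the arc, then finds a Julia fixed point on it; such a point is either a common boundary point of two Fatou components or a cut point of a single Fatou boundary, so $\Jcal_f$ fails to be a carpet. Your ``Hausdorff-limit configuration'' gestures at the same idea but is not an argument as written.

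For ``not carpet $\Rightarrow$ Levy arc'', your pigeonhole step presupposes exactly what needs proof: that the touching point (or cut point) is eventually periodic under $f$. Without that, the iterated arcs need not fall into finitely many homotopy classes rel $P_f$, and the ``classical Levy cycle extraction'' has nothing to bite on. The paper does not pull back at all; it cites Rossetti \cite{Rossetti_PerPtBdryInt} to obtain a periodic or preperiodic point on the common boundary directly, and then pushes the internal-ray arc \emph{forward} until it becomes literally periodic, which immediately gives the Levy cycle.
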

\begin{proof}
    By \cite[Proposition 1.3]{BartholdiDudko_expanding} and \cite[Theorem 5.1]{GHMZ_JCorveCarpetJulia}, we obtain one direction: if $\Jcal_f$ is a Sierpi\'{n}ski carpet, then there is no Levy arc.
    
    Suppose there exists a Levy arc $\alpha$ of $(\hCbb,P_f)$. Recall that $\alpha$ is (represented as) either (i) an arc connecting the centers of two periodic Fatou components or (ii) an internal ray of a periodic Fatou component.
    
    In Case (ii), $\alpha$ joins the centers of two adjacent Fatou components. Consider Case (i). Let $v$ be the Julia endpoint of $\alpha$. Since $v\in P_f$, there exist a critical point $w$ with $f^n(w)=v$ for some $n>0$. Then there exists more than one lifts of $\alpha$ via $f^n$ incident to $w$, and thus more than one Fatou components are adjacent to each other at $w$.
\end{proof}

\subsection*{Extension of entropy zero invariant graphs}
Suppose $\Jcal_f$ is not a \Sier so that there exist Levy arcs by Proposition \ref{prop:NoLevyArciffCarpet}. We use the graph extensions (E1),(E2), and (E3) discussed in Section \ref{subsec:GraphExtensions}.

{\it Step 0: initial graphs}\indent  Let $G$ be the union of the Levy arcs and $P_f$ with $\V(G)=P_f$. We allow a single vertex to be a connected component of $G$. We define each internal ray in the Levy arcs as an edge of $G$. Hence, every edge of $G$ is an $FJ$-edge.

{\it Step 1: merging components in backward iterates}\indent For any $n\ge0$, let $G^{(n)}$ be the level-$n$ extension of $G$ by preimages. We take a sufficiently large $n>0$ so that the number of connected components of $G^{(n)}$ is equal to that of $G^{(m)}$ for all $m>n$. We define $G^{(n)}$ as a new $G$.

{\it Step 2: merging components whose clusters intersect}\indent Let $G_1,G_2,\dotsm,G_k$ be connected components of $G$. For any $n\ge0$, we define $G^{(n)}_i$ as the connected component of the level-$n$ extension $G^{(n)}$ of $G$ by preimages that contains $G_i$.

For each $G_i$, we define the {\it level-$n$ cluster} $K_i^{(n)}$ and the {\it cluster} $K_i$ of $G_i$ as the closures of the unions of Fatou components intersecting $G^{(n)}_i$ and $\bigcup_{n\ge0} G^{(n)}_i$, respectively. For each $i$, we have $f(K_i)=K_j$ for some $j$. We say that $K_i$ is {\it periodic} if $f^m(K_i)$ for some $m>0$.

Suppose that $K_i \cap K_j \neq \emptyset$ with $i \neq j$.
If both $K_i$ and $K_j$ are periodic, then, by \cite[Lemma 4.3]{DHS_Decomp}, there exists a periodic point $p \in K_i \cap K_j$.

In general, there exists $m > 0$ such that $K_{i'} := f^m(K_i)$ and $K_{j'} := f^m(K_j)$ are periodic. If $i' \neq j'$, then there exists a periodic point $p' \in K_{i'} \cap K_{j'}$, and hence an $f^m$-preimage of $p'$ lies in $K_i \cap K_j$. If $i' = j'$, then by modifying the argument in \cite[Lemma 4.3]{DHS_Decomp}, we can find a periodic point $p' \in K_{i'}$ such that one of its $f^m$-preimages lies in $K_i \cap K_j$. In \cite[Lemma 4.3]{DHS_Decomp}, the authors consider Levy arcs between two periodic clusters; here, we instead consider Levy arcs that self-join a single cluster.

Hence, there exists a finite set $P \subset \bigcup_{i\neq j} K_i \cap K_j$ with $f(P)\subset P$ such that $P \cap (K_i \cap K_j) \neq \emptyset$ whenever $K_i \cap K_j \neq \emptyset$. We also include $P_f$ in $P$, i.e., $P_f\subset P$. For each $p\in P\cap K_i$ and some large $m>0$, we connect $G^{(m)}_i$ to $p$ with certain paths $\tilde{\gamma}_{i,p,l}$, where $l\in \{1,2,\dots,n_{i,p}\}$ such that the graph
\begin{equation}\label{eqn:GraphExtension}
    H:=\bigcup_i G^{(m)}_i \cup \bigcup_{i,p,l} \tilde{\gamma}_{i,p,l}
\end{equation}
is $f$-invariant. We briefly describe the construction of $\tilde{\gamma}_{i,p,l}$ for the case when $p$ and $K_i$ are fixed by $f$.

There are three possible cases.
\begin{enumerate}[leftmargin=*]
    \item If $p\in G_i^{(m)}$ for some $m>0$, then we do not create any new arcs $\tilde{\gamma}_{i,p,l}$; in this case, $p\notin \bigcup_{n>0} G_j^{(n)}$ for some $j$.
    \item Suppose $p\notin \bigcup_{n>0} G_i^{(n)}$ but $p$ is on the boundary of a Fatou component. Let $\{U_{i,p,l}:1\le l \le n_p\}$ be a periodic cycle of Fatou components in $K_i$ that are commonly incident to $p$. Let $v_{p,i,l}$ denote the center of $U_{i,p,l}$. Then, there exists $m>0$ such that for each $i$, the graph $G^{(m)}$ contains all $v_{p,i,l}$'s as vertices.
    
    Then, we can find an $f$-invariant set of internal rays of Fatou components $\{\tilde{\gamma}_{i,p,l}\}_{l=1,2,\dots,n_p}$.
 
    \item Suppose that $p\notin \bigcup_{n>0} G_i^{(n)}$ and $p$ is not on the boundary of any Fatou component in $K_i$. In \cite[Lemma 4.2]{DHS_Decomp}, the authors construct a path, possibly self-intersecting, that is periodic near $p$, by using iterative preimages of finitely many arcs in $\overline{G_i^{(1)}\setminus G_i}$. Then they use the linearization near the (repelling) fixed point $p$ in \cite[Lemma 4.2]{DHS_Decomp} to modify the path to a set of disjoint arcs $\{\tilde{\gamma}_{i,p,l}\}_{l=1,2,\dots,n_{i,p}}$ that are permuted by $f$ near $p$. Finally we take a sufficiently large $m>0$ so that $G_i^{(m)}$ intersects $\tilde{\gamma}_{i,p,l}$'s in the linearizable neighborhood of $p$. We note that $\tilde{\gamma}_{i,p,l}$'s can be chosen as arcs almost in the preimages of $G_i^{(m)}$, in the sense of Definition \ref{defn:ArcsAlmostPreimag}.
\end{enumerate}

Case (2) uses the extensions by internal rays (E2), and Case (3) uses the extension by arcs almost in the preimages (E3).

We iterate Steps~1 and~2, using the output graph $H$ from Step~2 as the input graph $G$ for the next iteration of Step~1. Each iteration reduces the number of connected components of $G$. After finitely many iterations, we obtain a graph $G$ for which a further iteration does not decrease the number of connected components. We then terminate the process and refer to the resulting graph $G$ as a {\it maximally extended graph}.

\begin{lem}\label{lem:ExtGraphConn iff Crochet}
    Let $f$ be a \pcf rational map with a non-empty Fatou set. The maximally extended graph is connected if and only if $f$ is a crochet map.
\end{lem}

The ``only if'' direction is clear. The ``if'' direction follows from \cite[Section 4.3]{DHS_Decomp}.

\subsection*{Vertex conditions}
Recall the definitions of extensions of graphs in Section \ref{subsec:GraphExtensions}.

In Case (2) of Step 2, the Fatou endpoint of an internal ray is contained in $G^{(m)}$ for all sufficiently large $m>0$. Since every $p\in P$ is either periodic or preperiodic, if $G$ satisfies (P5) and $p\in G$, then $p\in \V(G^{(m)})$ for all sufficiently large $m>0$. Therefore, for all sufficiently large $m>0$, the vertex conditions (E2-V) and (E3-V) are satisfied for $G^{(m)}$, where Cases (2) and (3) in Step 2 are regarded as (E2)- and (E3)-extensions; See Lemmas \ref{lem:(P5)Preserve}--\ref{lem:(P5) (E3-V)}.
\vspace{10pt}

\subsection*{Extension of graphs for a large iterate $f^N$}
A cluster $K_i$ is said to be {\it fixed} (by $f$) if $f(K_i)=K_i$ and {\it prefixed} if $K_j:=f(K_i)$ is fixed. 

There exists a sufficiently large $N>0$ such that, when iterating Steps 1 and 2 for $f^{N}$, we may additionally assume the following in Step 2.
\begin{enumerate}
    \item[(i)] Every cluster $K_i$ is fixed or prefixed by $f$.
    \item[(ii)] Every point $p\in P$ is fixed or prefixed by $f$.
    \item[(iii)] If $K_i$ and $K_j$ are two distinct clusters fixed by $f$ with $K_i\cap K_j\neq 0$, then there exists a fixed point $p\in K_i\cap K_j$ such that, in Case (3), there is an arc $\tilde{\gamma}_{i,p}\subset K_i$ with $f(\tilde{\gamma}_{i,p})\supset \tilde{\gamma}_{i,p}$.
\end{enumerate}
Then, in Case (3), we only add $\tilde{\gamma}_{i,p}$, which is an (E3')-extension.

Consider the case where $K_i \cap K_j \neq \emptyset$ with $i \neq j$, and $K_i$ is pre-fixed; that is, $f(K_i) = K_{i'}$ and $f(K_{i'}) = K_{i'}$. Suppose $f(K_j) = K_{j'}$, possibly with $j = j'$.

If $i' \neq j'$, then $G_{i'}$ and $G_{j'}$ are joined in Case (3) of Step 2 via an (E3')-extension; see (iii) above. Consequently, $G_i$ and $G_j$ are joined in a further iteration of Step 1.

If $i' = j'$, then there exists $p \in (K_i \cap K_j) \cap P$ such that $p' := f(p) \in K_{i'} \cap P$ is a fixed point of $f$. Hence, in Case (3) of Step 2, we add an arc along the preimage $\tilde{\gamma}_{i',p'}$, which yields an (E3')-extension. Then $G_i$ and $G_j$ are joined through $p$ by a further iteration of Step 1.

Therefore, we can assume that the (E3)-extension in Case~(3) of Step 2 is an (E3')-extension. Consequently, by Theorems \ref{thm:SepRecurr} and \ref{thm:TotalPreorderGraphs}, we obtain the following theorem.

\begin{thm}\label{thm:TotalPreorderCrochet}
    Suppose that $g$ is a crochet map. For $f:=g^N$ for some large $N>0$, we have a connected $f$-invariant graph $G$ with $P_f\subset G$ and $h_{top}(f|_G)=0$ such that
    \begin{enumerate}
        \item $G$ satisfies the properties (P1)--(P4) in Section \ref{Sec:GraphExt and Preorder}, and
        \item the finite subdivision rule $\R$ associated with $f$ with $S_\R^{(1)}=G$ has separated recurrence (Definition \ref{defn:SepRecur}).
    \end{enumerate}
    Let $\preceq^*$ denote the total preorder extension of the preorder $\preceq$ on $\Edge(S_\R)$, by using (P4). For level-$0$ edges $e_1$ and $e_2$, the following hold.
    \begin{enumerate}[resume]
        \item Suppose that $e_1$ has a level-$n$ subedge $e^n$ of type $e_2$. Then, $e_1 \preceq^* e_2$, and the equivalence $e_1 \simeq^* e_2$ holds if and only if $e^n$ is the level-$n$ recurrent subedge of $e_1$.
        \item If the Julia vertices of $e_1$ and $e_2$ lie in the same periodic cycle, then  $e_1 \simeq^* e_2$.
    \end{enumerate} 
\end{thm}
\begin{proof}
    The proof is straightforward from the discussion above. We recall that preorder extensions preserve strict orders (Definition \ref{defn:preorder}).
\end{proof}

It is worth noting that, when considering $f^N$, one may use an $f^N$-invariant proper subset of $P$, which is the one used for $f$. Even if the same set $P$ is used for $f^N$, the collection of edges and arcs added in Step 2 for $f^N$ may be smaller than that for $f$.

For example, in Figure \ref{fig:selfjoin}-(B), the set $P = \{p_1, p_2\}$ forms a $2$-cycle for $f$, and both $e_1'$ and $e_2'$ must be added simultaneously to preserve $f$-invariance. This is why (E3) may fail to preserve (P3). However, for $f^2$, the points $p_1$ and $p_2$ are fixed points, so it suffices to use only $p_1$ and add $e_1'$. Once $G_1$ and $G_2$ are joined via $e_1'$, adding $e_2'$ is no longer necessary and, indeed, is prohibited by Condition~(3) of (E3').


\section{Asymptotic conformal energies of crochet maps}\label{sec:AsympConfEngCro}
In Section \ref{sec:naturalmap}, we construct a graph virtual endomorphism $f,\psi\from H_1\to H_0$ from a finite subdivision rule $\R$. We then in Section \ref{sec:TotPreorderExpLen} define a $p$-length on $H_0$ with a certain property, called {\it $K\,\hat{}$-expanding}, by using the total preorder on $\Edge(G)$ obtained from the property (P4). In Section \ref{subsec:psi_within_tile}, we discuss a local picture of the map $\psi\from H_1 \to H_0$ restricted to each level-$0$ tile $t^0$. Following that, in Section \ref{sec:LocalDeformTrees}, we discuss a local deformation of maps $\psi$ between certain graphs to decrease $E^p_p(\psi)$. Finally in Section \ref{sec:PfCrochetImplyARCdimOne}, we complete the proof of Theorem \ref{theorem:main} by showing that the Julia sets of crochet maps have Ahlfors-regular conformal dimension one.

\subsection{Dual graph virtual endomorphisms of finite subdivision rules}\label{sec:naturalmap}

Let $\R$ be a finite subdivision rule with a subdivision map $f\from \R(S_\R)\to S_\R$ such that $P_f \subset \V_F(S_\R)$. Then $f\from (S^2,\V_F(S_\R))\righttoleftarrow$ is a \pcf branched covering of hyperbolic-type. We consider subdivision complexes $\R^n(S_\R)$'s of different levels $n$'s as different CW-complex structures on the same underlying sphere $S^2$. Recall the discussion in Section \ref{sec:DualRecSklton} on dual $1$-skeletons of finite subdivision rules.

Denote by $G_0$ the dual $1$-skeleton of $S_\R$. We can take the level-$n$ dual $1$-skeleton $G_n$ by $G_n=f^{-n}(G_0)$. There exists a unique homotopy class $[\phi^n_0]\from G_n \to G_0$ for which the diagram \eqref{eqn:phi} commutes up to homotopy. The vertical arrows in the diagram are embeddings, which are homotopy equivalences.
\begin{equation}\label{eqn:phi}
    \begin{tikzcd}
        S^2\setminus \V(\R^n(S_\R)) \arrow[hookrightarrow]{r}
        & S^2 \setminus \V(S_\R) \\
        G_n\arrow[hookrightarrow]{u}{\simeq}\arrow{r}[swap]{[\phi^n_0]}
        & G_0\arrow[hookrightarrow]{u}[swap]{\simeq}
    \end{tikzcd} 
\end{equation}
The restriction $f|_{G_1}\from G_1\to G_0$ is a covering. For $\phi:=\phi^1_0$, we call
the graph virtual endomorphism $\Gcal=(G_0,G_1,f,\phi)$ the {\it dual graph virtual endomorphism} of the finite subdivision rule $\R$. We refer to $G_n$ as the level-$n$ {\it dual graph}, or {\it dual skeleton}, of $\R$.

\subsection*{Natural representative $\phi^n_0\from G_n\to G_0$} Let $V$ be a vertex of $G_n$. There exists a level-$n$ tile $t^n$ that is dual to $V$. We write $V=V(t^n)$ and $t^n={\rm Tile}(V)$. Let $t$ be the level-$0$ tile with $t^0\supset t^n$ such that $V(t^0)\in \V(G_0)$. We define $\phi^n_0\from \V(G_n)\to V(G_0)$ by $\phi^n_0(V(t^n))=V(t^0)$.

Then, $\phi^n_0$ extends to $G_n$ in such a way that every edge is collapsed to a vertex or mapped onto an edge homeomorphically. To this end, it suffices to show that a pair of adjacent vertices of $G_n$ is mapped to a pair of adjacent vertices or the same vertex. Suppose $t^n$ and $t'^n$ are level-$n$ adjacent tiles. If $t^n,t'^n\subset t^0$ for some level-$0$ tile $t$, then $\phi^n_0\from V(t^n),V(t'^n)\mapsto V(t^0)$. If $t^n\subset t^0$ and $t'^n\subset t'^0$ with $t^0\neq t'^0$, then $t^0$ and $t'^0$ are also adjacent, and thus $\phi^n_0(V(t^n))=V(t^0)$ and $\phi^n_0(V(t'^n))=V(t'^0)$ are adjacent vertices in $G_0$.

We call this map $\phi^n_0\from G_n \to G_0$ the {\it natural representative} of the homotopy class $[\phi^n_0]$ of the dual graph virtual endomorphism of $\R$. The same construction applies to obtain a natural representative $\phi^n_m\from G_n \rightarrow G_m$ of $[\phi^n_m]$ for all $n>m\ge 0$.

Recall that we assume $P_f\subset \V_F(S_\R)$ and consider the branched covering $f\from (S^2,\V_F(S_\R))\righttoleftarrow$, which is of hyperbolic-type. If $\V(S_\R) \supsetneq \V_F(S_\R)$, the dual graph $G_0$ is not a spine of of the punctured sphere $S^2 \setminus \V_F(S_\R)$. Hence, $f,\phi\from G_1\to G_0$ is not a suitable graph virtual endomorphism of $f\from (S^2,\V_F(S_\R))\righttoleftarrow$. Next, we describe how to deform $G_0$ into a spine of $S^2 \setminus \V_F(S_\R)$ via the so-called {\it $\origboxtimes$-deformation}.

\subsection*{$\origboxtimes$-deformation and $\origboxtimes$-dual graphs}
Suppose that $\R$ is a finite subdivision rule such that every edge is an $FJ$-edge, i.e., $S_\R^{(1)}$ satisfies (P1).

Consider the dual graph $G_0$ of $S_\R$. For each Julia vertex $v\in \V_J(S_\R)$, we deform the level-$0$ link $L^0(v)(\subset G_0)$ as follow:  Consider a level-$0$ tile $t$ with $v\in \V(t)$ such that $\phi_t\from \bft \to t$ is its characteristic map. Since $\R$ has $FJ$-edges only, so does $\bft$. The dual $1$-skeleton, denoted by $G_0(\bft)$, of $\bft$ is a starlike tree with the center vertex $V(\bft)$, whose endpoints are placed at the midpoints of edges of $\bft$. Consider a Julia vertex $\bf{v}$ of $\bft$, and let $\bfe_1$ and $\bfe_2$ be its incident edges of $\bft$, which must be $FJ$-edges. We replace the two edges, denoted by $E(\bft,\bfe_1)$ and $E(\bft,\bfe_2)$, of $G_0(\bft)$ that are dual to $\bfe_1$ and $\bfe_2$ with a new edge $E({\bf t}, {\bf v})$ connecting $\bf{v}$ to the dual vertex $V(\bft)$ of $\bft$.

We perform this for each Julia vertex ${\bf v}$ of $\bft$. Then, $G_0(\bft)$ is changed into a starlike tree $H_0(\bft)$ whose every edge joins $V(\bft)$ to a Julia vertex ${\bf v}$. See Figures \ref{fig:Orig Hexa}--\ref{fig:FJFJ}.

\begin{figure}
	\centering
	\def\svgwidth{0.7\textwidth}
	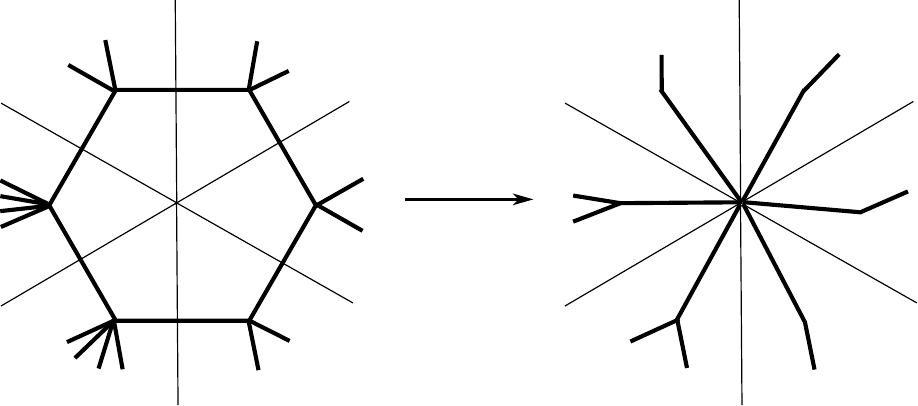
	\caption{The $\origboxtimes$-deformation near a Julia vertex $v$ with $\deg_{S
 _\R}(v)=6$.}
    \label{fig:Orig Hexa}
\end{figure}

We refer to this deformation as a {\it $\origboxtimes$-deformation}. If $\deg_{S_\R}(v)=4$ and $L^0(v)$ is an embedded square, this deformation changes the square to the cross in the symbol $\origboxtimes$. We emphasize that this deformation works in the absence of $JJ$-edges, which follows from (P1) of $S^{(1)}_\R$.

If the link $L^0(v)$ is homeomorphic to a circle, then the $\origboxtimes$-deformation replaces $L^0(v)$ with a starlike tree.

Let $H_0$ denote the graph obtained by applying the $\origboxtimes$-deformation to $G_0$. For any $n\ge 1$, we define $H_n:=f^{-n}(H_0)$, which is also equivalent to the $\origboxtimes$-deformation of $G_n$. We refer to $H_n$ as the level-$n$ {\it $\origboxtimes$-dual graph}, or {\it $\origboxtimes$-dual skeleton}, of $\R$.

It is easy to show that $H_0$ is a deformation retract of $\hCbb \setminus \V_F(S_\R)$. Hence there is a homotopy class $[\psi^n_0]$ of continuous maps $\psi^n_0 \from H_n \to H_0$ such that the following diagram commutes up to homotopy
\begin{equation}\label{eqn:DiagramDualGraphEndo}
    \begin{tikzcd}
        \hCbb\setminus \V_F(\R^n(S_\R)) \arrow[hookrightarrow]{r} & \hCbb \setminus \V_F(S_\R) \\
        H_n\arrow[hookrightarrow]{u}{\simeq}\arrow{r}[swap]{[\psi^n_0]} & H_0\arrow[hookrightarrow]{u}[swap]{\simeq}
    \end{tikzcd}
\end{equation}
where the vertical arrows are homotopy equivalences. The natural representative of $[\psi^n_0]$ will be discussed at the end of this subsection. See Figure \ref{fig:BasilicaFSR} for a finite subdivision rule of the Basilica polynomial and Figure \ref{fig:BasilicaFSRwithDualGraphwithLength} for its $\origboxtimes$-dual graphs $H_0$ and $H_1$.

\begin{rem}\label{rem:Homotopy Rel V_J}
The homotopy class of $[\psi^n_0 \from H_n \to H_0]$ can also be considered relative to $\V_J(S_\R) (\subset \V(H_0)\subset \V(H_n)$).    
\end{rem}

\begin{figure}
    \centering
        \def\svgwidth{0.8\textwidth}
        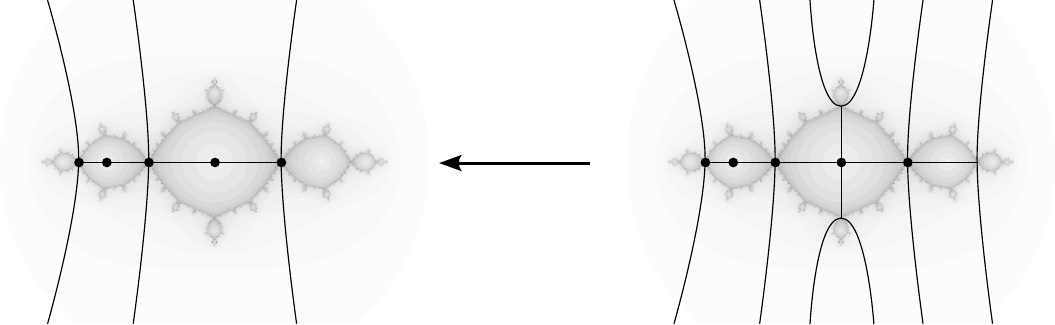
    \caption{A finite subdivision rule $\R$ of the Basilica polynomial $z^2-1$. Every edge is an $FJ$-edge, and $\R$ has separated recurrence.}
    \label{fig:BasilicaFSR}
\end{figure}

\begin{figure}
    \centering
        \def\svgwidth{0.9\textwidth}
        {\tiny
        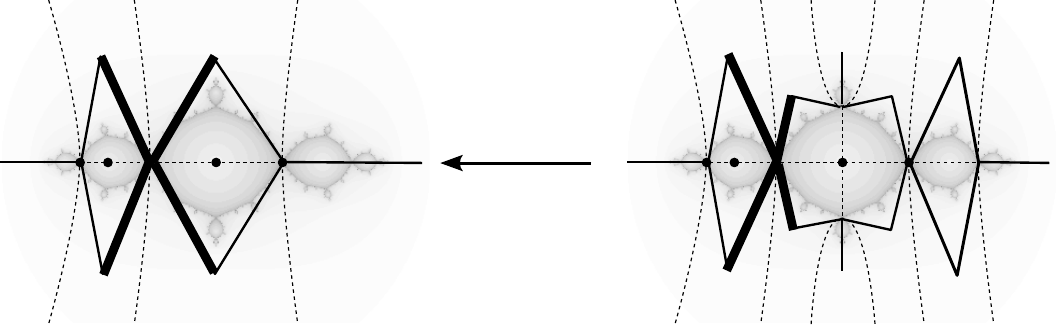}
    \caption{Level-$0$ and level-$1$ $\origboxtimes$-dual skeletons $H_0$ and $H_1$, with $K\,\hat{}$-expanding $p$-lengths labeled for $K=10$. The edges incident to periodic Julia vertices are depicted in bold. Under the natural representation $\phi^1_0$, the two tripods labeled with $10$'s are collapsed to vertices, and the rightmost tripod labeled with $10^2$'s is collapsed to the point at infinity.}
    \label{fig:BasilicaFSRwithDualGraphwithLength}
\end{figure}

\subsection*{Vertices and edges of $\origboxtimes$-dual graphs}
Let us discuss the vertices and edges of $H_n$ for $n\ge0$. Recall that, to perform the $\origboxtimes$-deformation, we assume (P1) such that every edge of $S_\mathbb{R}$ is an $FJ$-edge.

We define the vertex set $\V(H_n)$ by 
\[
    \V(H_n)=\V_J(S_\R)\cup V({\rm \{level\mhyphen n~Tiles\}}),
\]
where $V({\rm \{level\mhyphen n~Tiles\}})$ is the set of the dual vertices $V(t^n)$ of the level-$n$ tiles $t^n$. The edges of $H_n$ are defined accordingly. Then, for each edge $E$ in $H_n$, one of its endpoints is $V(t^n)$, where $t^n$ is a level-$n$ tile with $E\subset t^n$, and the other is a Julia vertex $v^n\in \V_J(\R^n(S_\R))$.

Recall the definitions of sides of edges and corners at vertices, which are discussed in Section~\ref{sec:FSR}. There is a natural bijection
\[
    \begin{array}{ccc}
    \left\{ (v^n,c) : v^n \in \V_J(\R^n(S_\R))~ {\rm and}~ c\in {\rm Corner}^n(v^n)\right\} & \leftrightarrow  & \left\{ {\rm edges~ of~} H_n\right\}\\
    (v^n,c) & \leftrightarrow & E(v^n,c)
    \end{array}
\]
that maps $(v^n,c)$ to a unique edge $E(v^n,c)$ of $H_n$ satisfying (i) its endpoints are $v^n$ and the dual vertex of ${\rm Tile}(v^n,c)$ and (ii) it intersects the connected component of $D(v^n)\setminus \R^n(S_\R)^{(1)}$ that is represented by $c$, where $D(v^n)$ is a small disk neighborhood of $v^n$.
Note that (i) is not enough to uniquely determine an edge of $H_n$ when ${\rm Tile}(v^n,c)$ is multiply incident to $v^n$.

\subsection*{Natural representatives of $\origboxtimes$-dual graphs}
Suppose that $\R$ is a finite subdivision such that $S_\R^{(1)}$ satisfies (P1)--(P4). Then, by Lemma \ref{lem:P1P2RecFF Edge}, we have $h_{\mathrm{top}}(f|_{S_\R^{(1)}})=0$.
Recall the natural representative $\phi^n_0\from G_n \to G_0$ for dual skeletons $G_n$'s of $\R$. The $\origboxtimes$-deformations naturally induces a map $\psi^n_0\from H_n \to H_0$, which maps each edge to either an edge homeomorphically or a vertex. We now describe this map $\psi^n_0$. The notation $\preceq^*$ denotes the total preorder introduced in Theorem \ref{thm:TotalPreorderCrochet}.
\medskip

\begin{figure}
    \centering
        \def\svgwidth{0.7\textwidth}
        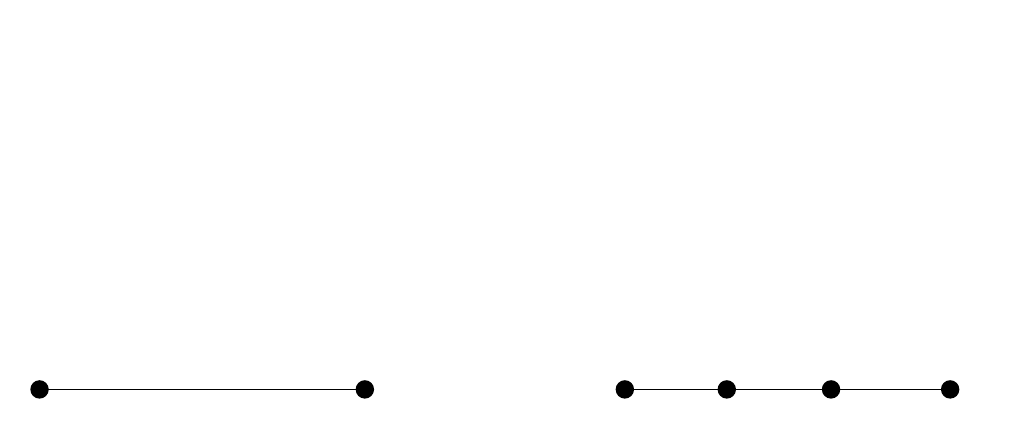
    \caption{$S_\R$ and $\R^n(S_\R)$ are shown in thin solid lines. Fatou and Julia vertices are labeled $F$ and $J$, respectively. $H_0$ and $H_n$ are shown in thick solid lines. The edges and vertices of $H_n$ contained in the area bounded by the dotted curve are mapped to the same vertex of $H_0$ (Case (J-0)). Four edges of Case (J-1) and two edges of Case (J-2) are drawn.}
    \label{fig:FJFJ}
\end{figure}

Consider the edge $E(v^n,c)$ of $H_n$ represented by $v^n \in \V_J(\R^n(S_\R))$ and $c\in {\rm Corner}^n(v^n)$. Let $t^n={\rm Tile}(v^n,c)$ and let $t^0$ be the level-$0$ tile with $t^0 \supset t^n$. Note that the vertex $V(t^n)\in \V(H_n)$, which is dual to $t^n$, is an endpoint of $E(v^n,c)$. We define $\psi^n_0(V(t^n))=V(t^0)\in \V(H_0)$. There are three cases. See Figure \ref{fig:FJFJ}.
\begin{itemize}
    \item[(J-0)] If $v^n \in {\rm int}(t^0)$, then $\psi^n_0( E(v^n,c) )=V(t^0) \in \V(H_0)$.
    \item[(J-1)] Suppose $v^n \in {\rm int}(e^0)$ where $e^0$ is a boundary edge of $t^0$. By (P1), $e^0$ is an $FJ$-edge. Let $v^0$ denote its Julia vertex $v^0$. Then $\psi^n_0(v^n)=v^0$ such that
        \[
            \psi^n_0\from E(v^n,c) \mapsto E(v^0,c'),
        \]
    where $c'\in {\rm Corner}^0(v^0)$ is the image of $c$ under
    \[
        {\rm Corner}^n(v^n) \twoheadrightarrow {\rm Side}(e^0) \hookrightarrow {\rm Corner}^0(v^0).
    \]
    Let $e^n$ be a level-$n$ edge incident to $v^n$ and $e'^0 := f^n(e^n)\in \Edge(S_\R)$. Then, $e^0 \precneq^* e'^0$. To see this, suppose for contradiction that $e^0 \simeq^* e'^0$. In that case, $e^n$ would be a recurrent subedge of $e^0$, and, by Lemma \ref{lem:P1P2RecFF Edge}, $v^n = v^0$ would be a periodic point, contradicting the assumption that $v^n \in {\rm int}(e^0)$.
    
    \item[(J-2)] Suppose $v^n =v^0 \in \V_J(S_\R)$. Then $\psi^n_0(v^n)=v^0=v^n$ such that
    \[
        \psi^n_0\from E(v^n,c)\mapsto E(v^0,c'),
    \]
    where $c'$ is the image of $c$ under
    \[
        {\rm Corner}^n(v^n) \twoheadrightarrow {\rm Corner}^0(v^0).
    \]
    Let $e^n$ be a level-$n$ edge incident to $v^n$ and $e'^0 := f^n(e^n)\in \Edge(S_\R)$. Then $e^0 \preceq^* e'^0$. The equivalence $e^0 \simeq^* e'^0$ holds if and only if $v^0=v^n$ is a periodic Julia vertex (Lemma \ref{lem:P1P2RecFF Edge}).
\end{itemize}

It is not difficult to show that the edge-wisely defined map $\psi^n_0\from H_n \to H_0$ is continuous and in the desired homotopy class in the diagram \eqref{eqn:DiagramDualGraphEndo}. We call the map $\psi^n_0$ defined above the {\it natural representative} in its homotopy class. We emphasize that the natural representative $\psi^n_0$ maps each edge either homeomorphically onto an edge or to a vertex. If $p$-lengths are assigned to $H_0$ and $H_n$, we define the natural representative so that it has a constant derivative on the interior of each edge.

\subsection{Total preorder on ${\rm Edge}(H_0)$ and $K\,\hat{}$-expanding $p$-lengths of $H_0$}\label{sec:TotPreorderExpLen}
Suppose that $\R$ is a finite subdivision rule such that $S_\R^{(1)}$ satisfies (P1)--(P4) and $h_{top}(f|_{S_\R^{(1)}})=0$
 
Consider the following diagram.
\begin{equation}\label{eqn:EdgeSets}
    \begin{tikzcd}
        & \Edge(S_\R)=\Edge_{FJ}(S_\R) \arrow[twoheadrightarrow]{d} \\
        \Edge(H_0)\arrow[twoheadrightarrow]{r}& \V_J(S_\R)
    \end{tikzcd}
\end{equation}
The surjection $\Edge_{FJ}(S_\R) \twoheadrightarrow \V_J(S_\R)$ sends each $FJ$-edge to its Julia vertex.

Let us begin with the total preorder $\preceq^*$ on $\Edge(S_\R)$ defined in Theorem \ref{thm:TotalPreorderCrochet}. By (4) of Theorem \ref{thm:TotalPreorderCrochet}, it induces a total preorder on $\V_J(S_\R)$. By considering the surjection $\Edge(H_0) \twoheadrightarrow \V_J(S_\R)$ as a quotient map and Definition \ref{defn:PreorderQuotient}, we obtain a total preorder on $\Edge(H_0)$, which we also denote by $\preceq^*$.   

For a real number $K>1$, we say that the $p$-length $\beta$ on $H_0$ is {\it $K\,\hat{}$-expanding} if it satisfies the following set of three conditions, denoted by ($K\,\hat{}$-exp):
\begin{enumerate}
    \item $K\le \beta(E)$ for any $E \in \Edge(H_0)$.
    \item $\beta(E(v_1,c_1))=\beta(E(v_2,c_2))$ if $v_1$ and $v_2$ lie in the same periodic cycle.
    \item $K\,\beta(E(v_1,c_1)) \le \beta(E(v_2,c_2))$ if $E(v_1,c_1) \precneq^* E(v_2,c_2)$.
\end{enumerate}
By ($K\,\hat{}$-exp)-(1), for each $E\in \Edge(H_0)$, we have $\lim_{K\to \infty}\beta(E)=\infty$.

Define a function $r\from \Edge(S_\R) \to \Zbb_{\ge0}$ as the minimal function satisfying: (a) $r(e)=r(e')$ if $e\simeq^* e'$, and (b) $r(e)<r(e')$ if $e\precneq^* e'$. We define a $p$-length $\alpha_K$ on $H_0$ by
\begin{align}\label{eqn:Alpha_K}
        \alpha_K(E(v,c))&:=K^{r(e_v)},
\end{align}
where $e_v$ is any edge in $S_\R$ incident to $v\in \V(S_\R)$. By Theorem \ref{thm:TotalPreorderCrochet}, the above inequalities are independent of the choice of edges $e_{v_i}$ incident to each vertex $v_i$. By definition, $\alpha_K$ is $K\,\hat{}$-expanding.

By abusing notation, we also denote by $\alpha_K$ the $p$-length on $H_n$ that is the lifting of $\alpha_K$ on $H_0$ through the covering map $f^n\from H_n \to H_0$.

\subsection{The restriction $\psi|_{H_1\cap t_0}$ to each level-$0$ tile $t_0$}\label{subsec:psi_within_tile}
Let $t^0$ be a level-$0$ tile. We first consider the case where $t^0$ is singly incident to its Julia vertices; the multiply incident case will be discussed at the end of this subsection. Then 
\[
H_0(t^0) := H_0 \cap t^0
\] 
is a starlike tree whose endpoints are the Julia vertices of $t^0$, and whose center vertex is the dual $V(t^0)$ of $t^0$. Its edges are of the form $E(v,c)$, where $v$ ranges over the Julia vertices of $t^0$ and $c^0 \in {\rm Corner}^0(v)$ satisfies ${\rm Tile}(v,c^0) = t^0$.

Next, consider the graph 
\[
H_1(t^0) := H_1 \cap t^0.
\] 
Its edges are of the form $E(v^1,c^1)$, where $v^1$ are the level-$1$ Julia vertices contained in $t^0$ and $c^1 \in {\rm Corner}(v^1)$ such that ${\rm Tile}(v^1,c^1)$ is a level-$1$ subtile of $t^0$. Recall the natural representative $\psi \colon H_1 \to H_0$ and the three cases (J-0), (J-1), and (J-2) discussed in Section~\ref{sec:naturalmap}.

We have $v^1 \in {\rm int}(t^0)$ if and only if $\psi(E(v^1,c^1)) = V(t^0)$.  

Suppose $v^1 \in \partial t^0$. Then 
\[
    \psi(E(v^1,c^1)) = E(v^0,c),
\] 
where $v^0$ is the Julia endpoint of a level-$0$ edge $e^0$ with $v^1 \in e^0$. Moreover, the corner $c^1$ maps to $c$ under the composition
\[
    {\rm Corner}^1(v^1) \to {\rm Side}(e^0) \to {\rm Corner}^0(v^0)
\]
when $v^1 \in {\rm int}(e^0)$ (Case~(J-1)), and under
\[
    {\rm Corner}^1(v^1) \to {\rm Corner}^0(v^0)
\]
when $v^1 = v^0$ (Case~(J-2)).

Now consider the $K\,\hat{}$-expanding $p$-length $\alpha_K$ defined in Section \ref{sec:TotPreorderExpLen}. Suppose $ \psi(E(v^1,c^1)) = E(v^0,c^0)$.

If $v^0 = v^1$ is a periodic Julia vertex, then
\[
    \frac{\alpha_K(E(v^1,c^1))}{\alpha_K(E(v^0,c^0))} = 1.
\]
Otherwise, let $e^1$ be a subedge of $e^0$ incident to $v^1 \in {\rm int}(e^0)$. Then $v' \notin \V(S_\R)$ and hence cannot be a periodic point of $f$. By Lemma~\ref{lem:P1P2RecFF Edge}, $e^1$ is not a recurrent subedge of $e^0$. Hence, if $e^1$ is of type $e'^0$, then $e'^0 \precneq^* e^0$, and therefore
\[
    \frac{\alpha_K(E(v^1,c^1))}{\alpha_K(E(v^0,c^0))} \le \frac{1}{K}.
\]

Suppose $t^0$ is multiply incident to edges or Julia vertices, and let $\phi_{t^0}\colon \bft^0 \to t^0$ denote the characteristic map of $t^0$. Then we can pull back $H_0(t^0)$ and $H_1(t^0)$ to the polygon $\bft^0$ via $\phi_{t^0}$, obtaining graphs $H_0(\bft^0)$ and $H_1(\bft^0)$ in $\bft^0$ that are analogous to the case when $t^0$ is singly incident to its Julia vertices. The same arguments can then be applied to $H_0(\bft^0)$ and $H_1(\bft^0)$.

\subsection{$(\epsilon,\epsilon')$-Deformation}\label{sec:LocalDeformTrees}
In this subsection, we discuss a model of the map $\psi \colon H_1(t^0) \to H_0(t^0)$, which is described in Section~\ref{subsec:psi_within_tile}, with the goal of constructing a deformation that reduces the $p$-conformal energy $E^p_p(\psi)$ to a value strictly less than one.

For a graph $G$, a {\it leaf} is a vertex $v$ with $\deg_G(v)=1$. We refer to an edge $e$ as a {\it boundary} edge is $e$ contains a leaf, and as {\it non-boundary} edge otherwise. The set of leaves is equal to the boundary $\partial G$ of $G$.

Consider a starlike tree $H$ with $\Edge(H)=\{e_0,e_1,\dots,e_k\}$ and the center vertex $v$. Denote by $w_i$ the leaf with $w_i\in e_i$.

Let $G$ be a connected finite graph such that its set of boundary edges is the union $E_0 \cup E_1 \cup \dots \cup E_k$, where $E_i=\{e_{i,1},e_{i,2},\dots,e_{i,p_i}\}$ with $p_i\ge1$ for $1\le i\le k$. Denote by $w_{i,j}$ and $v_{i,j}$ the endpoints of $e_{i,j}$ such that $w_{i,j}$ is a leaf of $G$. See Figure \ref{fig:deformation}.

A map $\psi\from (G,\partial G)\to (H,\partial H)$ is defined in such a way that (a) non-boundary edges are mapped to the center vertex $v$ and (b) each boundary edge $e_{i,j}$ is homeomorphically mapped onto $e_i$ with $\psi(v_{i,j})=v$ and $\psi(w_{i,j})=w_i$ for all $0\le i\le k$.

Fix $K,p\in(1,\infty)$. Suppose $\alpha$ and $\beta$ are $p$-lengths on $G$ and $H$ such that
\begin{enumerate}
    \item  $\alpha(e')\ge K$ and $\beta(e)\ge K$  for all $e'\in {\rm Edge}(G)$ and $e\in {\rm Edge}(H)$,
    \item $\alpha(e_{i,j})\ge K\, \beta(e_i)$ for all $i\ge 1$ and $j\in \{1,2,\dots p_i\}$,
    \item $\alpha(e_{0,j})\ge K\, \beta(e_0)$ for all $j\in \{2,\dots,p_0\}$, and
    \item $\alpha(e_{0,1})\ge \beta(e_0)$.
\end{enumerate}
We additionally assume that $\psi'$ is constant on ${\rm int}(e')$ for each $e'\in \Edge(G)$, with respect to $\alpha$ and $\beta$.

We consider the homotopy class of $[\psi\from (G,\partial G)\to (H,\partial H)]$ relative to the boundary $\partial G$.

\begin{lem}\label{lem:EpsilonDeform}
    In the setting above, there exists $K_0:=K_0(k,\max_i{p_i})>0$ such that for all $K>K_0$, we have
    \[
        E^p_p[\psi\from ((G,\partial G),\alpha)\to ((H,\partial H),\beta)]<1.
    \]
\end{lem}
\begin{proof}
Recall that for $p\in(1,\infty)$
\begin{align*}
    \textup{Fill}^p_p(\psi)(y) &= \sum_{x\in\psi^{-1}(y)}\psi'(x)^{p-1},~{\rm and}\\
    E^p_p(\psi) &=(\|\textup{Fill}^p_p(\psi)\|_{{\rm ess},\infty})^{1/p}.
\end{align*}

For the map $\psi\from G\to H$ defined above, we have
\[
    \textup{Fill}^p_p(\psi)(y)=\sum_{j=1}^{p_i} \frac{\beta(e_i)}{\alpha(e_{i,j})}\le 
    \left\{
    \begin{aligned}
         &\frac{p_i}{K}&& {\rm if}~y\in e_i ~{\rm and}~i>0 \\
        &\frac{p_1-1}{K}+\frac{\beta(e_0)}{\alpha(e_{0,1})} && {\rm if}~y\in e_0
    \end{aligned}
    \right.
\]
Recall $\alpha(e_{0,1}) \ge \beta(e_0)$. Since we are considering an upper bound for ${\rm Fill}^p_p$, we only need to consider the case when $\alpha(e_{0,1}) = \beta(e_0)$. In our application, we will have either $\alpha(e_{0,1}) = \beta(e_0)$ or $\alpha(e_{0,1}) \ge K\, \beta(e_0)$

Assume $\alpha(e_{0,1}) = \beta(e_0)$. Then $|\textup{Fill}^p_p(\psi)|_{{\rm ess},\infty} = 1 + \frac{p_1 - 1}{K}$. This value is slightly larger than 1 if $K$ is very large. We deform $\psi$ to $\psi_{(\epsilon,\epsilon')}$ so that $|\textup{Fill}^p_p(\psi_{(\epsilon,\epsilon')})|_{{\rm ess},\infty}<1$.

\begin{figure}
    \centering
        \def\svgwidth{0.8\textwidth}
        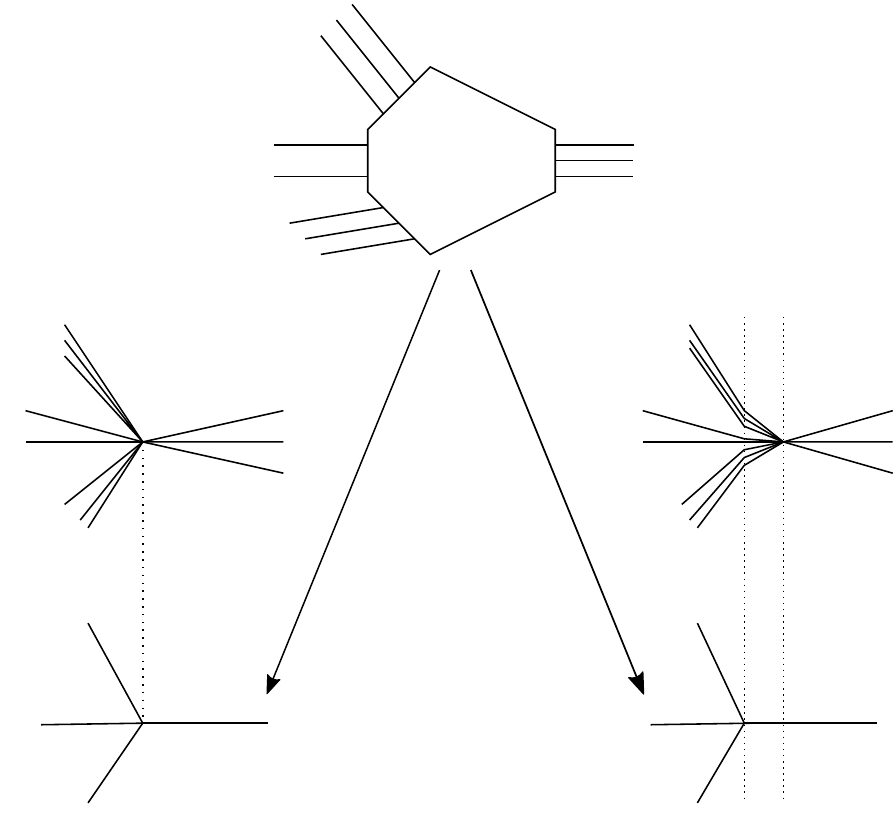
    \caption{The $(\epsilon,\epsilon')$-deformation. Graphs in the middle row are obtained from $G$ by collapsing $Z$ to a point.}
    \label{fig:deformation}
\end{figure}

Define $Z:=\psi^{-1}(v)$. The {\it $(\epsilon,\epsilon')$-deformation $\psi_{(\epsilon,\epsilon')}$} is, roughly speaking, pulling the map on $Z$, $\psi|_Z\from Z \to \{v\}$, toward the edge $e$. See Figure \ref{fig:deformation}. For a precise construction, let us take positive small real numbers $\epsilon,\epsilon'>0$. Let $v_\epsilon$ be the point in ${\rm int}(e_0)$ with $\beta(v,v_\epsilon)=\epsilon$ and $v_{i,j,\epsilon'}$ be the point in $e_{i,j}$ with $\alpha(v_{i,j},v_{i,j,\epsilon'})=\epsilon'$ for all $i\in \{1,2,\dots, k\}$ and $j\in\{1,2,\dots,p_i\}$. Here $p$-lengths $\alpha$ and $\beta$ are used as metrics on graphs. The deformation $\psi_{(\epsilon,\epsilon')}$ is defined in such a way that
\begin{enumerate}
    \item $\psi_{(\epsilon,\epsilon')}(Z)=v_\epsilon$,
    \item $\psi_{(\epsilon,\epsilon')}(v_{i,j,\epsilon'})=v$ for all $i\in \{1,2,\dots,k\}$, and
    \item $\psi_{(\epsilon,\epsilon')}'$ is locally constant on $G\,\setminus \left(Z\cup \{v_{i,j,\epsilon'}:1\le i\le k,\, 1\le j\le p_i\}\right)$.
\end{enumerate}

Let us evaluate $\textup{Fill}^p_p(\psi_{(\epsilon,\epsilon')})(y)$. Let $p_{\max}:=\max_i p_i$. For two points $w,w'$ on the same edge, we use interval notations $(w,w')$, $[w,w']$, etc., to denote the corresponding intervals embedded in the edge.

\noindent{\it Case 1: $y \in (v,v_\epsilon)$}
\[
    \textup{Fill}^p_p(\psi_{(\epsilon,\epsilon')})(y) = \sum\limits_{i=1}^k\sum\limits_{j=1}^{p_i} \left(\frac{\beta([v,v_\epsilon])}{\alpha([v_{i,j},v_{i,j,\epsilon'}])}\right)^{p-1}
    \le kp_{\max} \left(\frac{\epsilon}{\epsilon'}\right)^{p-1}.
\]

\noindent{\it Case 2: $y \in (v_\epsilon,v_0)$.}
For all $0<\epsilon<1-\frac{1}{\sqrt{2}}$, we have
\begin{align*}
    \textup{Fill}^p_p(\psi_{(\epsilon,\epsilon')})(y) & \le \left(\frac{\beta(e_0)-\epsilon}{\alpha(e_{0,1})}\right)^{p-1}+ \sum_{j=2}^{p_0}\left(\frac{\beta(e_0)-\epsilon}{\alpha(e_{0,j})}\right)^{p-1}\\
    & \le 1-(p-1)\frac{\epsilon}{\alpha(e_{0,1})}+|p-1||p-2|\left(\frac{\epsilon}{\alpha(e_{0,1})}\right)^2+\frac{p_{\max}}{K^{p-1}}.
\end{align*}
We used the following error bound of Taylor's expansion: For all $0<x< 1-\frac{1}{\sqrt{2}}$, which satisfies $(1-x)^{p-3}\le 2$ for all $p>1$, we have
\begin{align*}
    (1-x)^{p-1} &\le 1-(p-1)x+\frac{|(p-1)(p-2)|}{2}\max_{y\in [0,x]}|1-y|^{p-3}x^2\\
    &\le 1-(p-1)x+|(p-1)(p-2)| x^2. 
\end{align*}

\noindent{\it Case 3: $y\in {\rm int}(e_i)$ with $i>0$.} For all $0<\epsilon'<K/2$, we have
\[
    \textup{Fill}^p_p(\psi_{(\epsilon,\epsilon')})(y) = \sum\limits_{j=1}^{p_i} \left(\frac{\beta(e_i)}{\alpha(e_{i,j})- \epsilon'}\right)^{p-1}
    \le p_{\max}\frac{1}{(K/2)^{p-1}}.
\]
\vspace{5pt}

We can choose $\epsilon$, $\epsilon'$, and $K$ independently of $k$ and $p_{\max}$ such that $\textup{Fill}^p_p(\psi_{(\epsilon,\epsilon')})(y) < 1$ in all cases.
\end{proof}

\begin{rem}\label{rem:GenEpsilonDeform}
    The same argument works for the case when some leaves of $G$ and $H$ are identified, respectively. By the continuity $\psi$, if some leaves $w_{i_1,j_1}$ and $w_{i_2,j_2}$ of edges $e_{i_1,j_1}\in E_{i_1}$ and $e_{i_2,j_2}\in E_{i_2}$ with $i_1\neq i_2$ are identified in $H$, then we identify $v_{i_1}$ with $v_{i_2}$ in $G$ accordingly.
\end{rem}

\subsection{Proof of ARC.dim(the Julia set of a crochet map)=1}\label{sec:PfCrochetImplyARCdimOne}
Suppose $f$ is a crochet map. For every $N>0$, since $\Jcal_f=\Jcal_{f^N}$, we may replace $f$ with $f^N$. Let $G$ and $\R$ be an $f$-invariant graph and the finite subdivision rule with $S_\R=G$ whose subdivision map is $f$ obtained by Theorem \ref{thm:TotalPreorderCrochet}, i.e., we assume that
\begin{enumerate}
    \item $G=S_\R^{(1)}$ satisfies (P1)--(P4) in Section \ref{sec:P1-P4}, and 
    \item $\R$ has separated recurrence (Definition \ref{defn:SepRecur}, Theorem \ref{thm:SepRecurr}).
\end{enumerate}

Let $H_0$ and $H_1$ be the level-$0$ and level-$1$ $\origboxtimes$-dual skeletons of $\R$ such that $\psi\from H_1 \to H_0$ is the natural representative. Recall the description of the natural representative $\psi$ in Section~\ref{sec:naturalmap} and also in Section~\ref{sec:TotPreorderExpLen} regarding $K\,\hat{}$-expanding $p$-lengths. For each level-$0$ tile $t^0$, we will perform the $(\epsilon,\epsilon')$-deformation, which is described in Lemma \ref{lem:EpsilonDeform}, to the restriction $\psi|_{H_1\cap t_0}\from H_1\cap t_0\to H_0 \cap t_0$.

Let us first consider the case when $t^0$ is singly incident to every Julia vertex. Then $H_0 \cap t^0$ is a starlike tree, and the map $\psi \colon H_1 \cap t^0 \to H_0 \cap t^0$ satisfies the assumptions for $\psi \colon G \to H$ in Lemma \ref{lem:EpsilonDeform}, except that some leaves of edges in the same set $E_i$ (as in the setting of Lemma \ref{lem:EpsilonDeform}) may be identified. See Figure \ref{fig:FJFJ} and Remark \ref{rem:GenEpsilonDeform}.

If $t^0$ is multiply incident to some Julia vertices, then we are in the generalized situation mentioned in Remark \ref{rem:GenEpsilonDeform}, where some leaves of edges in $E_{i_1}$ and $E_{i_2}$ with $i_1\neq i_2$ are identified.

Therefore, for every fixed $p>1$, by applying Lemma \ref{lem:EpsilonDeform} to each restriction $\psi|_{H_1\cap t_0}\from H_1\cap t_0\to H_0 \cap t_0$, we can find $\psi_1\sim \psi$ such that $E^p_p(\psi_1)<1$. Note that, in this application of Lemma \ref{lem:EpsilonDeform}, the constants $k$ and $p_{\rm max}$ are determined by $\R$.
\hfill $\Box$




\subsection{Notes on the non-hyperbolic case}\label{sec:non-hyp}
Suppose $f$ is a non-hyperbolic crochet map, and let $V$ be a set of marked points of $f$ such that $V_F$ and $V_J$ are non-empty subsets of its Fatou and Julia points, respectively. We then consider a graph virtual endomorphism as follows, following the approach suggested in \cite[pp.\@ 10--11]{ThuPosChac}.

Suppose that $H_0$ is a graph in $\hat{\mathbb{C}}$ such that $V_J \subset H_0$ and $H_0$ is a deformation retract of $\hat{\mathbb{C}} \setminus V_F$ relative to $V_J$. For any $n \ge 1$, define $H_n := f^{-n}(H_0)$. By an argument similar to the hyperbolic case, we obtain a graph virtual endomorphism $\Gcal = (H_0, H_1, f, \psi)$. The differences are that $f$ is an orbigraph covering, with respect to certain orbigraph structures on $H_0$ and $H_1$, and that we consider the homotopy class $[\psi^n_0 \colon H_n \to H_0]$ relative to $V_J$; see Remark \ref{rem:Homotopy Rel V_J}.

The author expects that the basic properties of asymptotic conformal energies for hyperbolic cases continue to hold in the non-hyperbolic generalization. This generalization has not yet been rigorously developed in the literature, though it is mentioned in \cite[Section~3.4]{Thu_RubberBand}, \cite{ThuElaGraph}, and \cite[p.~10]{ThuPosChac}.

The construction of $\origboxtimes$-dual skeletons also applies to the non-hyperbolic case, yielding the desired graph virtual endomorphism; see Remark~\ref{rem:Homotopy Rel V_J}. Hence, the proof is expected to remain valid under this generalized definition of conformal energies.

Furthermore, the author anticipates that the monotonicity of the conformal energies in Theorem~\ref{thm:MonoConfEngDim} could also be established with minimal modification. Achieving this would require a suitable generalization of the results in \cite{PilThu_ARConfDim} to include non-hyperbolic rational maps.

\bibliography{Cdimone}
\bibliographystyle{amsalpha}

\end{document}